\numberwithin{equation}{section}
\newtheorem{thm}{Theorem}[section]
\newtheorem{prop}[thm]{Proposition}
\newtheorem{lem}[thm]{Lemma}
\theoremstyle{remark}
\newcommand{\p}{\partial}
\title[W. P. for fifth order KdV equation]{
}
\author[T. K. Kato]{
}
\email[Takamori Kato]{d08003r@math.nagoya-u.ac.jp}
\subjclass[2000]{35Q55}
 \keywords{fifth order KdV equation, well-posedness, Cauchy problem, Fourier restriction norm method, low regularity}
\begin{document}

\begin{center}
{\bf WELL-POSEDNESS FOR THE FIFTH ORDER KDV EQUATION}

\bigskip {\sc Takamori Kato}

\smallskip {\small Graduate School of Mathematics, Nagoya University

Chikusa-ku, Nagoya, 464-8602, Japan}

\end{center}
\begin{abstract}
We consider the Cauchy problem of the fifth order KdV equation with low regularity data. We cannot apply the iteration argument
 to this problem when initial data is given in the Sobolev space $H^s$ for any $s \in \mathbb{R}$. So we give initial data in $H^{s,a}$ equipped with the norm 
\begin{align*}
\| \varphi \|_{H^{s,a}}=\|  \langle \xi \rangle^{s-a} |\xi|^{a}  \widehat{\varphi} \|_{L_{\xi}^2}. 
\end{align*} 
Then we recover derivatives of the nonlinear term to be able to use the iteration method. 
Therefore we obtain the local well-posedness in $H^{s,a}$ in the case of 
$s \geq \max\{-1/4, -2a-2 \}$, $-3/2<a \leq -1/4$ and $(s,a) \neq (-1/4,-7/8)$. Moreover, we obtain the ill-posedness in some sense when 
$s< \max \{-1/4, -2a-2 \}$, $a \leq -3/2$ or $a >-1/4$. 
The main tool is a variant of the Fourier restriction norm method, which is based on Kishimoto's work (2009). 
\end{abstract}
\maketitle

\section{Introduction}

We consider the Cauchy problem of the following fifth order KdV equation: 
\begin{align} \label{5KdV}
\begin{cases}
& \p_t u- \p_{x}^5 u+c_1 \p_x (u^3) + c_2 \p_x (\p_x u )^2 
+c_3 \p_x (u \p_x^2 u)=0, ~~\text{in}~~\mathbb{R} \times \mathbb{R}, \\
& u(0, x)=u_0(x), \hspace{1cm} x \in \mathbb{R},
\end{cases} 
\end{align}
where $c_1, c_2, c_3 \in \mathbb{R}$ with $c_3 \neq 0$. 
Here the unknown function $u$ is assumed to be real or complex valued when we consider the local well-posedness (LWP for short)
 and to be real valued when we deal the global well-posedness. 
\begin{align} \label{Lax}
\p_t u-\p_x^5 u -10 \p_x( u^3 ) + 5 \p_{x} (\p_x u)^2  +10 \p_x (u \p_x^2 u)
=0, 
\end{align}
is completely integrable in Lax sense and has an infinite number of conservation laws. 
The fifth order KdV equation models water waves (see, for instance, \cite{Be84}, \cite{Be77}, \cite{Ol}). 
Our main aim is to prove LWP for (\ref{5KdV}) with low regularity data. The main tool is the Fourier restriction norm method introduced by Bourgain \cite{Bo}. By using the theory of complete integrability, we obtain global solutions of (\ref{Lax}) with Schwartz data and solitary waves. But this method will not work 
for the well-posedness problem of (\ref{5KdV}) generalized to the non-integrable case. So the theory of dispersive PDEs is required, such as the Fourier restriction norm method. 

We review some known results related to this problem. 
Ponce \cite{Po} proved LWP in $H^s$ for $s \geq 4$ by the compactness argument, which was improved to $s > 5/2$ by Kwon \cite{Kw08}. 
Here the Sobolev space $H^s$ is defined by the norm 
\begin{align*}
\| \varphi \|_{H^s}:= \| \langle \xi \rangle^s  \widehat{ \varphi } \|_{L_{\xi}^2},
\end{align*}
where $\langle \xi \rangle:=(1+|\xi|^2)^{1/2}$ and $ \widehat{\varphi} $ is the Fourier transform of $\varphi$. 
Kenig, Ponce and Vega \cite{KPV94} studied the Cauchy problem for the higher order dispersive equation:
 \begin{align*} 
\p_t u + \p_x^{2j+1} u +P(u,\p_x u, \cdots, \p_x^{2j} u)=0, 
\end{align*}
where $P$ is a polynomial having no constant and linear term. Using the local 
smoothing estimates established in \cite{KPV93}, they showed LWP in the weighted Sobolev space $L^2( |x|^m dx) \cap H^s$ where $s>0$ and $m \in \mathbb{N} \cup \{0\}$ are some large numbers (see also \cite{Pi}). 
When $s>j -\frac{3}{2}-\frac{1}{2j}
+ \frac{2j-1}{2r'}$ and $1< r' \leq \frac{2j}{2j-1}$ with $j \geq 2$, 
Gr\"{o}nrock \cite{Gr} proved LWP for the Cauchy problem of the $2j+1$th order KdV equation in $\widehat{H}_{s}^{r} $, which is equipped with the norm 
\begin{align*}
\| \varphi \|_{\widehat{H}_{s}^{r} }:= \| \langle \xi \rangle^s 
\widehat{\varphi} \|_{L_{\xi}^{r'}}, 
\text{ where } \frac{1}{r}+\frac{1}{r'}=1. 
\end{align*}
Namely, he obtained LWP for (\ref{5KdV}) in $\widehat{H}_{s}^r$ when $s>\frac{1}{4}+\frac{3}{2r'}$ and $1< r \leq \frac{4}{3}$. 
Moreover, Kwon \cite{Kw08} proved LWP for the Cauchy problem of the modified fifth order KdV equation,
\begin{align} \label{m5KdV}
\p_t u -\p_x^5 u-6\p_x(u^5)+10 \p_x (u (\p_x u)^2)+ 10 \p_x(u^2 \p_x^2 u) =0,
\end{align}
in critical case $H^{3/4}$ by using the $[k, \mathbb{Z}]$-multiplier norm method and the block estimates established by Tao \cite{Ta}. 

We review difficulties in this problem. We only recover two derivative losses by the smoothing effects Lemma~\ref{lem_dy_1}--\ref{lem_dy_3} below. So the nonlinear term $\p_x(u \p_x^2 u)$ has more derivatives than can be recovered by the smoothing effects. The fact implies that Picard's interaction method is not available when 
initial data is given in $H^s$ for any $s \in \mathbb{R}$, which causes the strong interaction between high and low frequencies data. This type of phenomenon is observed in the Benjamin-Ono equation and the Kadomtsev-Petviashvili-I (KP-I) equation. 
In \cite{MST01} and \cite{MST02}, Molinet, Saut and Tzvetkov showed the data-to-solution maps of these equations fail to be $C^2$. 
Furthermore, in \cite{KT05} and \cite{KT08}, Koch and Tzvetkov proved these maps cannot be uniformly continuous. 
Using the similar argument to \cite{MST01} or \cite{MST02}, we prove that (\ref{5KdV}) the flow map fails to be $C^2$. We first define the quadratic term of the Taylor expansion of the data-to-solution map as 
\begin{align} \label{qu_def}
A_2(u_0)(t) =-c_2 \int_{0}^{t} U(t-s) \p_x( \p_x u_1(s) )^2 ds 
- c_3 \int_0^t U(t-s) \p_x (u_1(s) \p_x^2 u_1(s) ) ds.
\end{align}
where $U(t):=e^{t \p_x^5}$ and $u_1(t):=U(t) u_0$. Next, we put the sequence of initial data $\{ \phi_N \}_{N=1}^{\infty } \in H^{\infty}$ as follows:
\begin{align} \label{cou_ini}
\widehat{\phi_N} (\xi) =N^{-s+2}~\chi_{[N-N^{-4},N+N^{-4}]}(\xi)
+N^{2} \chi_{[N^{-4}/2,N^{-4}]} (\xi),
\end{align} 
for $N \gg 1$. Clearly, $\| \phi_N \|_{H^s} \sim 1 $. Substituting (\ref{cou_ini}) into (\ref{qu_def}), 
\begin{align*}
 \| A_2 (\phi_N) (t) \|_{H^{s}} \geq C N,
\end{align*}
for $|t|$ bounded, which implies the flow map, $H^s \ni u_0 \mapsto u(t) \in H^s$, cannot be $C^2$ for any $s \in \mathbb{R}$ by the general argument in \cite{Ho}. Therefore the iteration method is not available. Moreover, we remark that the modified fifth order KdV equation (\ref{m5KdV}) is linked with the fifth order KdV equation (\ref{Lax}) through the Miura transform $v \mapsto u= \alpha \p_x v + \beta v^2$ for some constants $\alpha, \beta$. If $v$ is a smooth solution of (\ref{m5KdV}), then $u$ solves (\ref{Lax}). But (\ref{5KdV}) is a non-integrable equation so that it seems unable to apply the Miura transform. 

To avoid these difficulties, we change the space in which initial data is given as follows: 
\begin{align*}
H^{s,a}(\mathbb{R}):=\bigl\{ u \in \mathcal{Z}'(\mathbb{R})~;~ 
\| u \|_{H^{s,a}}:=\| \langle \xi \rangle^{s-a} |\xi|^a \widehat{u}  \|_{L_{\xi}^2} < \infty \bigr\},
\end{align*}
where 
$\mathcal{Z}'(\mathbb{R}^n)$ denotes the dual space of
\begin{align*}
\mathcal{Z}(\mathbb{R}^n):=\bigl\{ u \in \mathcal{S}(\mathbb{R}^n)~;~ 
D^{\alpha} \mathcal{F} u(0)=0 \text{ for every multi-index }\alpha  \bigr\}.
\end{align*}
For the details of $\mathcal{Z}(\mathbb{R})$, see e.g. pp. 237 in \cite{Tr}.  Note that we can recover more derivatives of the nonlinear term $\p_x (u \p_x^2 u)$ in the interaction between high and low frequencies data when $a<0$. Therefore the iteration method works in the case of 
\begin{align} \label{co_op}
s \geq \max \bigl\{-\frac{1}{4}, -2a-2 \bigr\}, \hspace{0.3cm}
-\frac{3}{2}< a \leq -\frac{1}{4} \text{ and } 
(s,a) \neq (-\frac{1}{4}, -\frac{7}{8}),
\end{align}
and we obtain the well-posedness result in $H^{s,a}$ as follows.
\begin{thm} \label{thm_well}
 Let $s,a$ satisfy (\ref{co_op}). Then 
(\ref{5KdV}) is locally well-posed in $H^{s,a}$. 
\end{thm}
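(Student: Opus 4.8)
The plan is to run a Picard iteration in a suitable Bourgain-type space adapted to the modified data space $H^{s,a}$, following the variant of the Fourier restriction norm method of Kishimoto. First I would introduce a space $X^{s,a,b}$ built from the phase $\tau - \xi^5$, with norm roughly $\| \langle \xi\rangle^{s-a}|\xi|^a \langle \tau - \xi^5\rangle^b \widehat{u}\|_{L^2_{\tau,\xi}}$, together with a time-localized version $X^{s,a,b}_T$ and, if needed, an auxiliary low-modulation $Y^{s,a}$-type component to handle the endpoint $b=1/2$ behavior (this is the standard Besov/$X^{s,1/2,1}$-plus-$L^2_tH^{...}_x$ fix). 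The weight $|\xi|^a$ with $a<0$ is what supplies the extra negative powers of low frequency needed to close the derivative loss in $\p_x(u\p_x^2 u)$; everything below is bookkeeping of how the dispersive smoothing (Lemmas \ref{lem_dy_1}--\ref{lem_dy_3}) interacts with that weight.

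The key steps, in order, are: (i) the linear estimates — $\| U(t)u_0\|_{X^{s,a,b}_T} \lec \|u_0\|_{H^{s,a}}$ and the Duhamel estimate $\|\int_0^t U(t-s)F(s)\,ds\|_{X^{s,a,b}_T} \lec \|F\|_{X^{s,a,b-1}_T}$, both standard once the spaces are set up; (ii) the trilinear estimate for $c_1\p_x(u^3)$, which is the easiest and essentially known; (iii) the two bilinear estimates, for $\p_x(\p_x u)^2$ and for $\p_x(u\p_x^2 u)$, of the form $\|\p_x(u\p_x^2 v)\|_{X^{s,a,b-1}_T} \lec \|u\|_{X^{s,a,b}_T}\|v\|_{X^{s,a,b}_T}$, which are the crux of the whole argument. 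For these I would decompose into Littlewood–Paley pieces $N_1,N_2,N_3$ for the two inputs and the output, split into the regimes high$\times$high$\to$low, high$\times$low$\to$high, low$\times$low, etc., and in each regime use the resonance identity for $\xi^5$ to get a lower bound on $\max$ of the three modulations, then estimate by Cauchy–Schwarz / the $L^4$–$L^4$–$L^2$ or $L^2$–$L^\infty$–$L^2$ Strichartz-type bounds coming from Lemmas \ref{lem_dy_1}--\ref{lem_dy_3}. The boundary of the admissible region (\ref{co_op}) — the lines $s=-1/4$, $s=-2a-2$, the constraint $a>-3/2$, and the excluded point $(-1/4,-7/8)$ — will emerge precisely as the places where one of these regime-by-regime sums fails to converge; in particular the high$\times$low$\to$high interaction in $\p_x(u\p_x^2 u)$, where two derivatives hit the high-frequency factor and only the weight $|\xi_{\mathrm{low}}|^a$ is available to absorb them, is what forces $a\le -1/4$ and the lower bound on $s$ in terms of $a$, and the delicate case $a=-7/8$ with $s=-1/4$ is where the logarithmic divergence cannot be beaten and must be excluded.

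Once the multilinear estimates are in hand, the rest is routine: define the map $\Phi(u) = \psi(t)U(t)u_0 - \psi_T(t)\int_0^t U(t-s)N(u)(s)\,ds$ with $N$ the full nonlinearity, show via (i)--(iii) that $\Phi$ is a contraction on a ball in $X^{s,a,b}_T$ for $T$ small depending on $\|u_0\|_{H^{s,a}}$, extract the fixed point as the solution, and verify persistence of regularity, uniqueness in the relevant class, and continuous dependence of $u_0 \mapsto u$ (Lipschitz, in fact, from the same estimates applied to differences). I expect the main obstacle to be the bilinear estimate for $\p_x(u\p_x^2 u)$ at the boundary of (\ref{co_op}): making the case analysis genuinely exhaustive, correctly tracking how the $\langle\xi\rangle^{s-a}|\xi|^a$ weight behaves when the output frequency is very small versus very large, and confirming that the only obstruction in the stated range is the single excluded point $(s,a)=(-1/4,-7/8)$ — this is where Kishimoto's refinement of the $X^{s,b}$ method (the extra function space component and the careful low-modulation analysis) is essential and where the bulk of the technical work lies.
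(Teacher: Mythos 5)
Your plan follows essentially the same route as the paper: a contraction argument in a Kishimoto-type modification of the Bourgain space $\hat{X}^{s,a,b}$ (with the $|\xi|^a$ low-frequency weight supplying the missing derivative, a Besov-in-modulation refinement at $b=1/2$ in high frequencies, and a specially weighted low-frequency/low-modulation component), fed by the linear estimates and by bilinear/trilinear estimates proved via the smoothing Lemmas~\ref{lem_dy_1}--\ref{lem_dy_3} and a dyadic case analysis, with the high$\times$low$\to$high interaction of $\p_x(u\p_x^2u)$ dictating the region (\ref{co_op}). The paper fills in the details you leave implicit — the exact low-frequency modulation weights such as $3a/5+9/10$ on $D_0$ forced by Examples~1--3, the additional $L^2_\xi L^1_\tau$ norm in the Duhamel estimate, the scaling reduction to small data, and the Muramatu--Taoka argument for uniqueness — but these are refinements of the same approach rather than a different one.
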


If we assume that $u$ is real valued and 
\begin{align} \label{co_gl}
c_1=-\frac{2}{5} \alpha,~~~ c_2= \alpha,~~~c_3=2 \alpha \hspace{0.3cm} \text{for} \hspace{0.3cm}
\alpha \in \mathbb{R} \setminus \{ 0 \},
\end{align}
then two conserved quantities, 
\begin{align*}
\int u^2 dx, \hspace{0.5cm} \int (\p_x u)^2 + \frac{2}{5} \alpha u^3 dx,
\end{align*}
holds. By using these, we obtain a priori estimate as follows.  
\begin{prop} \label{prop_apr}
Let $u$ be a real valued solution to (\ref{5KdV}) with (\ref{co_gl}).
Then, for $-1 \leq a  \leq -1/4$, we obtain 
\begin{align} \label{apr-es}
\sup_{0 \leq t \leq T} \| u (t,\cdot)  \|_{H^{1,a}}^2 & \leq C 
\bigl\{ \| u_0 \|_{H^{1,a}}^2 + \| u_0 \|_{L^2}^{10/3} +
T^{4/3} \bigl( \|u_0 \|_{H^1}^{10/3} +\| u_0 \|_{L^2}^{5} \bigr) \bigr\}.
\end{align}
\end{prop}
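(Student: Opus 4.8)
The plan is to bound the $H^{1,a}$-norm by splitting it into a part comparable to the $H^1$-norm, which is controlled directly by the two conserved quantities furnished by (\ref{co_gl}), and a genuinely low-frequency piece carrying the singular weight $|\x|^{2a}$, whose growth in time must be estimated by hand. Fix a Littlewood--Paley decomposition $u=\sum_\la P_\la u$ over dyadic $\la>0$. Since $a<0$ we have $\LR{\x}^2\le\LR{\x}^{2(1-a)}|\x|^{2a}$ pointwise, so $\|v\|_{H^1}\lesssim\|v\|_{H^{1,a}}$; moreover, estimating the weight $\LR{\x}^{2(1-a)}|\x|^{2a}$ separately on $|\x|\ge1$ (where it is $\sim|\x|^2$) and on $|\x|<1$ (where it is $\sim|\x|^{2a}$) gives
\[
\|u(t)\|_{H^{1,a}}^2\ \lesssim\ \|u(t)\|_{H^1}^2\ +\ \sum_{0<\la<1}\la^{2a}\|P_\la u(t)\|_{L^2}^2\ =:\ \|u(t)\|_{H^1}^2+\mathcal{L}(t).
\]
It thus suffices to control $\sup_{[0,T]}\|u(t)\|_{H^1}$ and $\sup_{[0,T]}\mathcal{L}(t)$.

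For the first, recall the conserved quantities $M(u)=\int u^2\,dx$ and $E(u)=\int(\p_x u)^2+\tfrac25\al u^3\,dx$. By Gagliardo--Nirenberg, $\|v\|_{L^3}^3\lesssim\|v\|_{L^2}^{5/2}\|\p_x v\|_{L^2}^{1/2}$, so Young's inequality yields both $|E(v)|\lesssim\|\p_x v\|_{L^2}^2+\|v\|_{L^2}^{10/3}$ and $\|\p_x v\|_{L^2}^2\le 2|E(v)|+C\|v\|_{L^2}^{10/3}$. Applying these at times $t$ and $0$ and using $\|u(t)\|_{L^2}=\|u_0\|_{L^2}$ and $E(u(t))=E(u_0)$, one obtains $\sup_{[0,T]}\|u(t)\|_{H^1}^2\lesssim\|u_0\|_{H^1}^2+\|u_0\|_{L^2}^{10/3}\lesssim\|u_0\|_{H^{1,a}}^2+\|u_0\|_{L^2}^{10/3}$, which accounts for the first two terms of (\ref{apr-es}).

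The main point is the low-frequency quantity $\mathcal{L}(t)$. Fix dyadic $0<\la<1$ and differentiate $\|P_\la u(t)\|_{L^2}^2$. Since $\p_x^5$ is skew-adjoint, $\mathrm{Re}\int\overline{P_\la u}\,\p_x^5P_\la u\,dx=0$, so only the nonlinearity contributes:
\[
\tfrac{d}{dt}\|P_\la u(t)\|_{L^2}^2=-2\,\mathrm{Re}\int\overline{P_\la u}\,P_\la\p_x\bigl(c_1u^3+c_2(\p_x u)^2+c_3\,u\,\p_x^2u\bigr)dx.
\]
Because the output frequency is $\lesssim\la<1$, a derivative landing on a $P_\la$ costs only a factor $\la$, and Bernstein's inequality gives $\|P_\la f\|_{L^2}\lesssim\la^{1/2}\|f\|_{L^1}$. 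The only dangerous term is $\p_x(u\,\p_x^2u)$, which naively would produce the uncontrolled quantity $\|\p_x^2u\|_{L^2}$; this is circumvented by writing $u\,\p_x^2u=\p_x(u\,\p_x u)-(\p_x u)^2$ and integrating by parts so that the two remaining derivatives fall on the low-frequency factor $P_\la u$. After these manipulations one is left with an estimate of the form
\[
\Bigl|\tfrac{d}{dt}\|P_\la u(t)\|_{L^2}\Bigr|\ \lesssim\ \la^{3/2}\bigl(\|u(t)\|_{L^3}^3+\|\p_x u(t)\|_{L^2}^2+\|u(t)\|_{L^2}\|\p_x u(t)\|_{L^2}\bigr).
\]
Integrating in $t$, squaring, multiplying by $\la^{2a}$ and summing over $0<\la<1$, the geometric series $\sum_{\la<1}\la^{2a+3}$ converges precisely because $a>-3/2$ (in particular for $-1\le a\le-1/4$), which yields
\[
\sup_{[0,T]}\mathcal{L}(t)\ \lesssim\ \mathcal{L}(0)+\Bigl(\int_0^T\|u(s)\|_{L^3}^3+\|\p_x u(s)\|_{L^2}^2+\|u(s)\|_{L^2}\|\p_x u(s)\|_{L^2}\,ds\Bigr)^2,
\]
with $\mathcal{L}(0)\lesssim\|u_0\|_{H^{1,a}}^2$. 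The remaining time integral is then estimated by Hölder's inequality in $t$ together with the conservation laws and the Gagliardo--Nirenberg/Young bounds above (using in particular $\int_0^T\|\p_x u(s)\|_{L^2}^2\,ds\lesssim T(\|u_0\|_{H^1}^2+\|u_0\|_{L^2}^{10/3})$ and $\int_0^T\|\p_x u(s)\|_{L^2}^{1/2}\,ds\le T^{3/4}(\int_0^T\|\p_x u(s)\|_{L^2}^2\,ds)^{1/4}$); tracking the exponents produces the term $T^{4/3}(\|u_0\|_{H^1}^{10/3}+\|u_0\|_{L^2}^5)$ of (\ref{apr-es}).

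The hard part is the low-frequency estimate: one must arrange all integrations by parts so that no factor of $\|\p_x^2u\|_{L^2}$ — uncontrolled at the $H^1$ level — ever appears, while still extracting a power $\la^{3/2}$ of the output frequency (any exponent strictly larger than $-a\le1$ would suffice), which is exactly what makes the weighted dyadic sum over $0<\la<1$ converge. Everything else is conservation laws together with routine Hölder/Young bookkeeping.
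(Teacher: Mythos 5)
Your overall strategy --- splitting $\|u\|_{H^{1,a}}^2$ into an $H^1$ part controlled by the two conservation laws and a weighted low-frequency part, and rewriting $u\p_x^2u=\p_x(u\p_x u)-(\p_x u)^2$ so that $\|\p_x^2u\|_{L^2}$ never appears --- is essentially the paper's (the paper works with the single multiplier $Pu=\mathcal{F}_\xi^{-1}|\xi|^a\chi_{\{|\xi|\le1\}}\mathcal{F}_x u$ instead of dyadic blocks, and uses $2\p_x(u\p_x^2u)=\p_x^3(u^2)-2\p_x(\p_x u)^2$). The genuine gap is the final claim that ``tracking the exponents produces the term $T^{4/3}(\|u_0\|_{H^1}^{10/3}+\|u_0\|_{L^2}^{5})$''. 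It does not follow from the estimates you display. Your dyadic bound $\bigl|\frac{d}{dt}\|P_\la u\|_{L^2}\bigr|\lesssim\la^{3/2}\bigl(\|u\|_{L^3}^3+\|\p_x u\|_{L^2}^2+\|u\|_{L^2}\|\p_x u\|_{L^2}\bigr)$ has a right-hand side containing \emph{no} power of the quantity being controlled, so after integrating in $t$, squaring and summing you get
\begin{equation*}
\sup_{[0,T]}\mathcal{L}(t)\ \lesssim\ \mathcal{L}(0)+\Bigl(\int_0^T\bigl(\|u\|_{L^3}^3+\|\p_x u\|_{L^2}^2+\|u\|_{L^2}\|\p_x u\|_{L^2}\bigr)ds\Bigr)^2\ \lesssim\ \mathcal{L}(0)+T^2\bigl(\|u_0\|_{H^1}^4+\|u_0\|_{L^2}^{20/3}\bigr),
\end{equation*}
because the only available control of the integrand is the conserved-quantity bound $\sup_t\|\p_x u\|_{L^2}^2\lesssim\|u_0\|_{H^1}^2+\|u_0\|_{L^2}^{10/3}$. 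The H\"older steps you invoke only redistribute powers inside the single time integral and cannot remove the $T^2$ created by the squaring: already the contribution of $\|\p_x u\|_{L^2}^2$ alone gives $\bigl(\int_0^T\|\p_x u\|_{L^2}^2\,ds\bigr)^2\sim T^2\|u_0\|_{H^1}^4$, which is not $\lesssim T^{4/3}\|u_0\|_{H^1}^{10/3}$ unless $T\|u_0\|_{H^1}\lesssim1$. So what your argument actually proves is a strictly weaker a priori bound (still sufficient for the global-existence application, but not (\ref{apr-es}) as stated).

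The missing idea is to retain a \emph{half} power of the controlled quantity on the right. In the paper each nonlinear term in the energy identity for $Pu$ is estimated through $\|P^2\p_x u\|_{L^\infty}\lesssim\|P^2\p_x u\|_{L^2}^{1/2}\|(P\p_x)^2u\|_{L^2}^{1/2}\le\|Pu\|_{L^2}^{1/2}\|u\|_{L^2}^{1/2}$, where the symbol bound $|\xi|^{a+1}\chi_{\{|\xi|\le1\}}\le1$ is precisely where $a\ge-1$ enters; this yields $\p_t\|Pu\|_{L^2}^{3/2}\lesssim\|u\|_{L^2}^3\|\p_x u\|_{L^2}^{1/2}+\|u\|_{L^2}^{1/2}\|\p_x u\|_{L^2}^2+\|u\|_{L^2}^{5/2}$, hence $\sup_{[0,T]}\|Pu\|_{L^2}^{3/2}\le\|Pu_0\|_{L^2}^{3/2}+CT(\|u_0\|_{H^1}^{5/2}+\|u_0\|_{L^2}^{15/4})$, and raising to the power $4/3$ gives exactly the $T^{4/3}(\|u_0\|_{H^1}^{10/3}+\|u_0\|_{L^2}^5)$ term. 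Note also that the obvious patch inside your dyadic framework (writing $\|P_\la u\|_{L^2}\le\|P_\la u\|_{L^2}^{1/2}\|u\|_{L^2}^{1/2}$ to produce the half power and then raising to $4/3$) turns the frequency gain $\la^{3/2}$ into $\la^{2}$ and leaves you with $\sum_{\la<1}\la^{2a+2}$, which diverges at the endpoint $a=-1$ included in the proposition; the single weighted projection $P$ avoids this. Either adapt the interpolation step accordingly or work with $P$ as in the paper; as written, (\ref{apr-es}) is not established.
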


By this proposition, we extend the local-in-time solutions obtained by Theorem~\ref{thm_well} to time global ones. 
\begin{thm} \label{thm_well-2}
Let $s \geq 1$ and $-1 \leq a \leq -1/4$. Then (\ref{5KdV}) with (\ref{co_gl}) is globally well-posed in $H^{s,a} $.
\end{thm}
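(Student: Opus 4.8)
The plan is to globalise the local solutions of Theorem~\ref{thm_well} by means of the a priori bound in Proposition~\ref{prop_apr}, first at the regularity level $s=1$ and then propagating to $s>1$ by persistence of regularity. First I would check that every pair $(s,a)$ with $s\ge 1$, $-1\le a\le -1/4$ satisfies the hypothesis (\ref{co_op}): indeed $-1\le a\le -1/4$ gives $-2a-2\in[-3/2,0]$, so $\max\{-1/4,-2a-2\}\le 0<1\le s$, while $-3/2<-1\le a\le -1/4$ and $(s,a)\neq(-1/4,-7/8)$ since $s\ge 1$. Hence Theorem~\ref{thm_well} applies, and iterating it in the usual way yields, for real data $u_0\in H^{s,a}$, a unique real valued maximal solution $u$ on $[0,T^\ast)$, depending continuously on $u_0$, with the blow-up alternative: if $T^\ast<\infty$ then $\limsup_{t\uparrow T^\ast}\|u(t)\|_{H^{s,a}}=\infty$. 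I would also record the elementary weight comparison $\langle\xi\rangle\le\langle\xi\rangle^{1-a}|\xi|^a$, valid for $a<0$ because $(\langle\xi\rangle/|\xi|)^a\le 1$, which gives the embedding $H^{1,a}(\R)\hookrightarrow H^1(\R)\cap L^2(\R)$ with $\|u\|_{H^1}+\|u\|_{L^2}\lec\|u\|_{H^{1,a}}$.

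\emph{Global existence for $s=1$.} Take $s=1$ and suppose $T^\ast<\infty$. On each $[0,T]$ with $T<T^\ast$ the solution exists and is real valued, so Proposition~\ref{prop_apr} applies; combining it with the embedding of the previous paragraph,
\[
\sup_{0\le t\le T}\|u(t)\|_{H^{1,a}}^2\le C\bigl(1+T^{4/3}\bigr)\bigl(\|u_0\|_{H^{1,a}}^2+\|u_0\|_{H^{1,a}}^{10/3}+\|u_0\|_{H^{1,a}}^{5}\bigr).
\]
The right-hand side is finite and non-decreasing in $T$, hence bounded by a constant depending only on $u_0$ and $T^\ast$, uniformly for $T<T^\ast$. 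This contradicts the blow-up alternative, so $T^\ast=\infty$ and the solution is global in $H^{1,a}$; uniqueness and continuous dependence on $[0,T]$ for every $T$ follow by applying Theorem~\ref{thm_well} on each piece of a finite partition of $[0,T]$.

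\emph{Propagation to $s>1$.} For $s>1$ I would argue by persistence of regularity. The multilinear estimates underlying Theorem~\ref{thm_well}, applied to the cubic term $\p_x(u^3)$ and the quadratic terms $\p_x(\p_x u)^2$ and $\p_x(u\,\p_x^2 u)$ in the relevant $X^{s,b}$-type spaces, are (as is typical) \emph{linear} in the factor carrying the highest regularity, the remaining factors being controlled by the $H^{1,a}$ norm. Consequently the local existence time supplied by Theorem~\ref{thm_well} for an $H^{s,a}$ datum can be taken to depend only on its $H^{1,a}$ norm, and on each such subinterval the $H^{s,a}$ norm grows at most by a fixed multiplicative factor. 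Given $T<\infty$, the already global $H^{1,a}$ solution has $\sup_{[0,T]}\|u(t)\|_{H^{1,a}}<\infty$ by the previous step, so $[0,T]$ is covered by finitely many subintervals of this controlled length; iterating the growth bound over them gives $\sup_{[0,T]}\|u(t)\|_{H^{s,a}}<\infty$. Hence the $H^{s,a}$ solution is global, and together with the local uniqueness and continuous dependence of Theorem~\ref{thm_well} this yields global well-posedness in $H^{s,a}$.

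\emph{Main obstacle.} The delicate point is the last step: one must verify that the bilinear and trilinear estimates are genuinely linear in the top-regularity factor with the lower-order factors dominated by the $H^{1,a}$ norm, since this is exactly what allows the finite covering of $[0,T]$ by subintervals whose length does not shrink along the flow. A secondary technical issue is that Proposition~\ref{prop_apr} is invoked for the (a priori only low-regularity) solutions produced by Theorem~\ref{thm_well}; this is legitimate because those solutions lie in $H^{1,a}\subset H^1$, and the two conservation laws used in the proof of that proposition are justified at this regularity through approximation by smooth solutions, using the well-posedness itself.
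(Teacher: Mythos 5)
Your argument is the same one the paper intends: check that $s\ge 1$, $-1\le a\le -1/4$ falls inside (\ref{co_op}), then combine the local theory of Theorem~\ref{thm_well} (whose existence time depends only on the size of the data, giving the blow-up alternative) with the $H^{1,a}$ a priori bound of Proposition~\ref{prop_apr} via the embedding $H^{1,a}\hookrightarrow H^1\cap L^2$; the paper's own proof is exactly this one-line globalization, and your $s=1$ step, including the approximation remark justifying the conservation laws for rough solutions, is correct. The only ingredient you use that is not actually established anywhere is the bi-regularity (tame) form of the nonlinear estimates for the step $s>1$ — Proposition~\ref{prop_mult-ES} is stated with all factors in the same $\hat{Z}^{s,a}$, so the claim that the estimates are linear in the top-regularity factor would have to be re-checked from the frequency-localized bounds of Proposition~\ref{prop_BE-2}; the paper is equally silent on this point, so you have flagged, rather than created, the gap.
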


We put $s_a=-2a-2$ and $B_r(\mathcal{X}):= \{ u \in \mathcal{X}~;~ \| u \|_{\mathcal{X}} \leq r  \}$ for a Banach space $\mathcal{X}$. We prove the ill-posedness in the following sense when $s< \max\{-1/4,-2a-2 \}$, $a \leq -3/2$ or $a>-1/4$.

\begin{thm} \label{thm_ill}

\vspace{0.5em}

\noindent
(i) Let $r>1$, $-3/2< a < -7/8$ and $c_2 \neq c_3$. Then, from Proposition~\ref{prop_well} below, there exist $T>0$ and the flow map 
for (\ref{5KdV}) $B_r(H^{s_a,a}) \ni u_0 \mapsto u(t) \in H^{s_a,a} $ for any $t \in (0,T]$. 
Then the flow map is discontinuous on $B_r(H^{s_a,a})$ (with $H^{s,a}$ topology) to $H^{s_a,a}$ (with $H^{s,a}$ topology) for any $s<s_a$. 

\vspace{0.5em}

\noindent
(ii) Let $s < -2a-2$, $a \leq -3/2$ or $a >-1/4$. Then there is no $T>0$ 
such that for (\ref{5KdV}) with $c_2 \neq c_3$ , $u_0 \mapsto u(t)$, is 
$C^2$ as a map from $B_r(H^{s,a})$ to $H^{s,a}$ for any $t \in (0,T]$ .

\vspace{0.5em}

\noindent
(iii) Let $s <-1/4$, $a \in \mathbb{R}$ and $c_1 \neq \frac{1}{5} c_3 (c_3-c_2)$. Then there is no $T>0$ such that 
the flow map for (\ref{5KdV}), $u_0 \mapsto u(t)$, is $C^3$ 
as a map from $B_r(H^{s,a})$ to $H^{s,a}$ for any $t \in (0,T]$.
\end{thm}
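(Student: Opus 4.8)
The plan is to establish the three ill-posedness assertions by analyzing the low-order terms in the Taylor/Picard expansion of the (putative) data-to-solution map, following the scheme of \cite{MST01}, \cite{MST02}, \cite{Bo}, and the companion well-posedness result Proposition~\ref{prop_well}. For part (i), I would first invoke Proposition~\ref{prop_well} to obtain, for $-3/2<a<-7/8$, a flow map on $B_r(H^{s_a,a})$ with values in $H^{s_a,a}$ (this is the endpoint case $s=s_a=-2a-2$ not covered by Theorem~\ref{thm_well}). Then, to show discontinuity into the weaker $H^{s,a}$ topology with $s<s_a$, I would reuse a rescaled/frequency-localized version of the counterexample data $\phi_N$ from (\ref{cou_ini}): take data of unit size in $H^{s_a,a}$ consisting of a high-frequency bump near $\xi\sim N$ together with a low-frequency bump near $\xi\sim N^{-\beta}$ for a suitable exponent, so that the quadratic term $A_2$ — dominated by the $c_3\,\p_x(u\p_x^2 u)$ interaction between high and low frequencies when $c_2\neq c_3$ — produces a contribution whose $H^{s,a}$ norm stays bounded below (indeed grows, as in the $\|A_2(\phi_N)\|_{H^s}\geq CN$ computation in the introduction) while the data converges to $0$ in $H^{s,a}$; meanwhile the linear part $U(t)u_0$ and the higher Picard iterates are controlled, forcing the solution map to be discontinuous at $0$.

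For part (ii), the statement is that no $C^2$ flow map $B_r(H^{s,a})\to H^{s,a}$ can exist when $s<-2a-2$, or $a\le -3/2$, or $a>-1/4$. Here I would argue by contradiction: if the map were $C^2$, its second Fréchet derivative at $0$ would be exactly the bilinear operator $B(u_0,u_0)=A_2(u_0)$ from (\ref{qu_def}), so $C^2$-ness forces the bilinear estimate $\|A_2(u_0)\|_{H^{s,a}}\lesssim \|u_0\|_{H^{s,a}}^2$ uniformly for small data. I would then exhibit, separately in each of the three regimes, a family $\phi_N$ (again a two-scale high/low frequency configuration, with the scales and amplitudes tuned to the excluded $(s,a)$ range) for which $\|\phi_N\|_{H^{s,a}}\sim 1$ but $\|A_2(\phi_N)\|_{H^{s,a}}\to\infty$. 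The key computation is to track the gain/loss of the symbol $\langle\xi\rangle^{s-a}|\xi|^a$ through the resonance function of the fifth-order dispersion $\tau=\xi^5$: on the high-low interaction the output frequency is $\approx$ the high frequency, the two $\p_x$'s in $\p_x(u\p_x^2 u)$ (or the single $\p_x$ plus $\p_x^2$) put $|\xi_{\mathrm{high}}|^2\cdot|\xi_{\mathrm{low}}|$-type derivative loss, and the modulation $|\tau-\xi^5|\sim|\xi_{\mathrm{high}}|^4|\xi_{\mathrm{low}}|$ gives only a limited time-integration gain; a direct bookkeeping shows the required inequality fails precisely when $s<-2a-2$, and the $a\le -3/2$ and $a>-1/4$ cases follow because then even the $|\xi|^a$ weight at low frequency cannot compensate (for $a>-1/4$ one is below the scaling/Strichartz threshold already present for $\p_x(u^3)$-type terms, and for $a\le -3/2$ the low-frequency weight $|\xi|^a$ blows up too fast).

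For part (iii), when $c_1\neq \tfrac15 c_3(c_3-c_2)$ the obstruction appears one order later: assuming a $C^3$ flow map, the third derivative at $0$ must coincide with the cubic term $A_3$ of the Picard expansion, which combines the genuine cubic nonlinearity $c_1\p_x(u^3)$ with the iterated quadratic terms; the hypothesis on the constants ensures these do not cancel, leaving an effective cubic interaction of the same shape as for the cubic (modified-KdV-type) flow. I would then use a single-frequency (or mildly two-scale) family concentrated at $\xi\sim N$ with $\|\phi_N\|_{H^{s,a}}\sim 1$ and show $\|A_3(\phi_N)\|_{H^{s,a}}\to\infty$ exactly when $s<-1/4$, recovering the known $-1/4$ threshold for $\p_x(u^3)$; note this case is insensitive to $a$ because the relevant interaction is high-high-high rather than high-low, so the $|\xi|^a$ weights cancel in numerator and denominator. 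The main obstacle throughout is the second part: choosing, in each of the three sub-regimes of (ii), the correct two-parameter (frequency scale, amplitude) family and carrying out the resonance-function bookkeeping so that the divergence is sharp at the stated boundary $s=-2a-2$ and at $a=-3/2$, $a=-1/4$ — i.e. matching the counterexamples to the positive range (\ref{co_op}) of Theorem~\ref{thm_well} with no gap.
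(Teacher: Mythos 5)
Your parts (ii) and (iii) follow essentially the paper's route: assume the flow map is $C^2$ (resp.\ $C^3$), reduce to the failure of $\| A_2(\psi_N)(t)\|_{H^{s,a}} \lesssim \|\psi_N\|_{H^{s,a}}^2$ (resp.\ the cubic analogue) via the general argument of \cite{Ho}, and exhibit explicit frequency-localized families; for (iii) your single high-frequency family and the non-cancellation condition $c_1 \neq \tfrac15 c_3(c_3-c_2)$ match the paper (the paper takes bumps at $\pm N$ of width $N^{-3/2}$ so that the resonance $q_2$ is $O(1)$ on the $(+,-,+)$ combination). In (ii), however, your bookkeeping is misattributed: with the paper's two-scale data $\psi_N$ (bumps at $\pm N$ plus a bump at $\xi \sim N^{-4}$), the threshold $s<-2a-2$ comes from the \emph{high-high$\to$low} output (the $A_{2,1}$ term, coefficient $c_2-c_3$, output at $|\xi|\lesssim N^{-4}$, size $N^{-2(s+2a+2)}$), while the high-low$\to$high output (coefficient $c_3$, symbol $\xi^3$) gives $N^{4(a+1/4)}$, independent of $s$, and is responsible only for the regime $a>-1/4$; the case $a\le -3/2$ is the divergence of $\int_0^{\gamma}|\xi|^{2a+2}d\xi$. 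This is fixable, but as stated your plan aims the computation at the wrong interaction.

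The genuine gap is in part (i). You propose data with a high bump at $\xi\sim N$ and a low bump at $\xi\sim N^{-\beta}$, with $A_2$ "dominated by the $c_3\,\p_x(u\p_x^2 u)$ high-low interaction" and an output whose $H^{s,a}$ norm "indeed grows" like $N$. This cannot work in the setting of (i). First, for the data to tend to $0$ in $H^{s,a}$ the low bump must tend to $0$ in the $|\xi|^a$-weighted norm, which \emph{is} its $H^{s_a,a}$ norm; since the bilinear estimate does hold at $s=s_a$ (Proposition~\ref{prop_mult-ES} with $s=s_a$, $-3/2<a<-7/8$), the high-low and low-low parts of $A_2$ are then $o(1)$ in $H^{s_a,a}\subset H^{s,a}$, so your proposed dominant term vanishes in the limit. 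Second, any output located at frequency $\sim N$ is automatically negligible in $H^{s,a}$: Proposition~\ref{prop_well} gives $\|u_{N,\delta}(t)\|_{H^{s_a,a}}\lesssim \delta$ uniformly, hence the part of $u$ (or of $A_2$) at frequency $\sim N$ has $H^{s,a}$ norm $\lesssim N^{s-s_a}\delta \to 0$; likewise $\|A_2\|_{H^{s,a}}\le\|A_2\|_{H^{s_a,a}}\lesssim\delta^2$, so growth like $N$ is impossible here — the $\|A_2(\phi_N)\|_{H^s}\ge CN$ computation of the introduction uses a different normalization and proves failure of $C^2$ in $H^s$, not discontinuity in $H^{s,a}$. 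Third, the hypothesis $c_2\neq c_3$ enters precisely because the only viable mechanism is high$\times$high$\to$low, whose coefficient is $c_2-c_3$. Accordingly, the paper's Proposition~\ref{prop_ill} takes \emph{purely high-frequency} data $\widehat{\phi}_{N,\delta}\sim \delta N^{2a+4}\chi_{[\pm N-N^{-4},\pm N+N^{-4}]}$, so that $\|\phi_{N,\delta}\|_{H^{s_a,a}}\sim\delta$ and $\|\phi_{N,\delta}\|_{H^{s,a}}\sim\delta N^{s-s_a}\to0$; the near-resonant high-high$\to$low interaction deposits an $H^{s,a}$ mass $\gtrsim\delta^2$ (uniform in $N$, not growing) at $|\xi|\lesssim N^{-4}$, where the $H^{s,a}$ and $H^{s_a,a}$ weights coincide, and the remainder $u-U(t)\phi_{N,\delta}-A_2(\phi_{N,\delta})$ is $O(\delta^3)$ in $H^{s_a,a}$ by the Lipschitz well-posedness at $s=s_a$, giving $\|u_{N,\delta}(t)\|_{H^{s,a}}\gtrsim\delta^2$ while the data tends to $0$; the restriction $-3/2<a<-7/8$ ensures both that $\int_0^{N^{-4}}|\xi|^{2a+2}d\xi$ has the right size and that $s_a>-1/4$ lies in the well-posedness range. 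Without this change of data and mechanism, your sketch of (i) does not close.
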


\noindent
\textbf{Remark.} (i) We do not know weather LWP for (\ref{5KdV}) holds or not 
in $H^{-1/4,-7/8}$. \\
(ii) 
From Theorems~\ref{thm_well} and \ref{thm_ill}, (\ref{5KdV}) is locally well-posed in $\dot{H}^{-1/4}$ and ill-posed in some sense 
for $s \neq -1/4$. 
\vspace{0.5em}

The main idea is how to define the function space to construct the solution of (\ref{5KdV}). 
The bilinear estimates of the nonlinear term $\p_x(u \p_x^2 u)$ plays an important role to prove Theorem~\ref{thm_well}. We introduce the Bourgain space 
$\hat{X}^{s,a,b}$ in the case of (\ref{5KdV}) as follows:  
\begin{align*}
\hat{X}^{s,a,b}:=\{ f \in \mathcal{Z}'(\mathbb{R}^2)~;~\| f \|_{\hat{X}^{s,a,b}}:=\| \langle \xi \rangle^{s-a} |\xi|^a \langle \tau-\xi^5 \rangle^b f 
\|_{L_{\tau,\xi}^2} < \infty \}.
\end{align*}
We consider the bilinear estimate of the nonlinear term $\p_x(u \p_x^2 u)$ in the Bourgain space $\hat{X}^{s,a,b}$ as follows:
\begin{align} \label{BE-3}
 \| \xi (\xi^2 f)* g \|_{\hat{X}^{s,a,b-1}}
\leq C \| f \|_{\hat{X}^{s,a,b}} \| g \|_{\hat{X}^{s,a,b}}.
\end{align} 
Here we remark, from Examples 1--3 in Appendix, 
(\ref{BE-3}) fails for any $b \in \mathbb{R}$ when
\begin{align} \label{co_cr_1}
& s=-\frac{1}{4}, ~~ -\frac{7}{8} < a \leq -\frac{1}{4}, \\
\label{co_cr_2}
& s=-\frac{1}{4}+\varepsilon_1,~~a=-\frac{7}{8} \text{ and }
 s=-2a-2,~~ -\frac{27}{28} <a<-\frac{7}{8}.
\end{align}
where $\varepsilon_1$ is a sufficiently small number such that $0 < \varepsilon_1 \leq s+1/4$. 
Therefore the standard argument of the Fourier restriction norm method does not work for (\ref{co_cr_1})--(\ref{co_cr_2}). To overcome this difficulty, we make a modification on the Bourgain space to establish the bilinear estimates when (\ref{co_cr_1})--(\ref{co_cr_2}). 
An idea of a modification of the Bourgain space is introduced by Bejenaru-Tao \cite{BT}. They considered the Cauchy problem of the Schr\"{o}dinger equation with the nonlinearity $u^2$ and 
proved LWP in critical case $H^{-1}(\mathbb{R})$. We mention how to modify the Bourgian space $\hat{X}^{s,a,b}$. Here we consider the typical counterexamples of the bilinear estimate to find a suitable function space. From Example 3 in Appendix, we have to take $b=1/2$ near the curve $\displaystyle \bigl\{\tau= \frac{\xi^5}{16}\text{ and } |\xi| \geq 1  \bigr\}$ to obtain (\ref{BE-3}) for (\ref{co_cr_1}). Thus 
we modify the Bourgian norm in the high frequency part $\{ |\xi| \geq 1 \}$ as follow:
\begin{align*}
\| f \|_{\hat{X}_{(2,1)}^{s,1/2}}:=
\bigl\| \bigl\{ \| \langle \xi \rangle^s \langle \tau-\xi^5 \rangle^{1/2} f 
\|_{L_{\tau,\xi}^2(A_j \cap B_k)} \bigr\}_{j,k \geq 0} \bigr\|_{l_j^{2} l_k^{1}}.
\end{align*}
where $A_j$, $B_k$ are two dyadic decompositions of $\mathbb{R}^2$ as follows:
\begin{align*}
A_j:= & \bigl\{ (\tau,\xi) \in \mathbb{R}^2~;~ 
2^{j} \leq \langle \xi \rangle < 2^{j+1} \bigr\}, \\
B_k:= & \bigl\{ (\tau,\xi) \in \mathbb{R}^2~;~ 2^{k} \leq \langle 
\tau-\xi^5 \rangle <2^{k+1}  \bigr\},
\end{align*}
for $j,k \in \mathbb{N} \cup \{ 0 \}$. For a Banach space $\mathcal{X}$ and 
a set $\Omega \subset \mathbb{R}^{n}$, $\| \cdot  \|_{\mathcal{X}(\Omega)}$ 
denotes $\| f \|_{\mathcal{X}(\Omega)}=\| \chi_{\Omega} f \|_{\mathcal{X}}$ 
where $\chi_{\Omega}$ is the characteristic function of $\Omega$. On the other hand, 
from Examples 1 and 2 in Appendix, we need to take $b=3a/5+9/10$ on the domain 
\begin{align*}
D_0 :=\bigl\{ (\tau,\xi) \in \mathbb{R}^2~;~ |\xi| \leq 1 \text{ and }
|\tau| \sim |\xi|^{-5/3} \bigr\},
\end{align*}
so that (\ref{BE-3}) holds for (\ref{co_cr_1}). Thus we modify the Bourgain norm in the low frequency part $\{|\xi| \leq 1 \}$ as follows:
\begin{align*}
 \| f \|_{\hat{X}_L^{a}}:= 
\begin{cases}
\| f \|_{\hat{X}_L^{-1/4, 3/4}(D_1)}+
\| f \|_{\hat{X}_L^{-1/4,3/4,1}(D_2)} & \text{ for } a=-1/4, \\
\| f \|_{\hat{X}_L^{a,5a/3+9/10,1}(A_0)} & \text{ for }
-7/8 <a< -1/4 , \\
\| f \|_{\hat{X}_L^{-7/8,3/8+\varepsilon_1/2} (A_0)} & \text{ for } a=-7/8, \\
 \| f \|_{\hat{X}_L^{a,3/8+\varepsilon_2/2}(A_0)} &  \text{ for }
-3/2< a< -7/8.
\end{cases}
\end{align*}
where 
\begin{align*} 
D_1:= & \bigl\{ (\tau,\xi) \in \mathbb{R}^2~;~|\xi| \leq 1 \text{ and } 
|\tau| \geq |\xi|^{-5/3} \bigr\},\\
D_2:= & \bigl\{ (\tau,\xi) \in \mathbb{R}^2~;~|\xi| \leq 1 \text{ and } 
|\tau| \leq |\xi|^{-5/3} \bigr\}, 
\end{align*}
and $\varepsilon_2$ is a sufficiently small number such that 
$0 < \varepsilon_2 \leq -(a+7/8)$. 
Here $\hat{X}_{L}^{a,b},~ \hat{X}_L^{a,b,1}$ are defined by the norm
\begin{align*}
& \| f \|_{\hat{X}_L^{a,b}}:=
\| |\xi|^a \langle \tau-\xi^5 \rangle^b f \|_{L_{\tau,\xi}^2(A_0)}, \\
& \| f \|_{\hat{X}_L^{a,b,1}}:=
\sum_{k \geq 0} 2^{bk} \| |\xi|^a f \|_{L_{\tau,\xi}^2(A_0 \cap B_k)}.
\end{align*}
This idea of a modification of the Bourgain norm in the low frequency part is based on Kishimoto's work \cite{Ki09} which proved the well-posedness for the Cauchy problem of the KdV equation in the critical case $H^{-3/4}$ (see also \cite{TK}). From the above argument, we define the function space $\hat{Z}^{s,a}$ as follows:
\begin{align*}
\hat{Z}^{s,a}:=\bigl\{f \in \mathcal{Z}'(\mathbb{R}^2)~ ;~\|f\|_{\hat{Z}^{s,a}} :=\| p_h f \|_{\hat{X}_{(2,1)}^{s,1/2}}+ 
\| p_l f \|_{\hat{X}_L^{a}} < \infty \bigr\}.
\end{align*}
where $p_h$, $p_l$ are projection operators such that 
$(p_h f) (\xi):=f(\xi)|_{|\xi| \geq 1},~(p_l f) (\xi):=f(\xi)|_{|\xi| \leq 1}$.  Using the function space above, we obtain the following nonlinear estimates which are the main ones in this paper. 

\begin{prop} \label{prop_mult-ES}
Let $s,a$ satisfy (\ref{co_op}). Then the following estimates hold.
\begin{align}
\label{BE-1}
& \| \langle \tau-\xi^5 \rangle^{-1}  \xi (\xi f)*(\xi g) 
\|_{\hat{Z}^{s,a}} \nonumber \\
& \hspace{0.3cm} + \| \langle \xi \rangle^{s-a} |\xi|^a \langle \tau-\xi^5 \rangle^{-1} \xi (\xi f)*(\xi g)    \|_{L_{\xi}^2 L_{\tau}^1}
\leq C \| f \|_{\hat{Z}^{s,a}} \| g \|_{\hat{Z}^{s,a}} \\
\label{BE-2}
& \| \langle \tau-\xi^5 \rangle^{-1}  \xi (\xi^2 f)* g 
\|_{\hat{Z}^{s,a}} \nonumber \\
& \hspace{0.3cm} + \| \langle \xi \rangle^{s-a} |\xi|^a \langle \tau-\xi^5 \rangle^{-1} \xi (\xi^2 f)*g    \|_{L_{\xi}^2 L_{\tau}^1}
\leq C \| f \|_{\hat{Z}^{s,a}} \| g \|_{\hat{Z}^{s,a}}, \\
\label{TE-1}
& \| \langle \tau-\xi^5 \rangle^{-1}  \xi~ f*g*h 
\|_{\hat{Z}^{s,a}} \nonumber \\
& \hspace{0.3cm} + \| \langle \xi \rangle^{s-a} |\xi|^a \langle \tau-\xi^5 \rangle^{-1} \xi~ f*g*h    \|_{L_{\xi}^2 L_{\tau}^1}
\leq C \| f \|_{\hat{Z}^{s,a}} \| g \|_{\hat{Z}^{s,a}} 
\| h \|_{\hat{Z}^{s,a}}. 
\end{align}
\end{prop}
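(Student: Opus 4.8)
The plan is to prove (\ref{BE-1}), (\ref{BE-2}) and (\ref{TE-1}) by the dyadic Littlewood--Paley reduction of the Fourier restriction norm method, together with the algebraic resonance identity for the fifth-order symbol, carried out in the modified space $\hat{Z}^{s,a}$. First I would decompose $f,g,h$ into pieces $f_{N,L}$ supported in $\{\langle\xi\rangle\sim N\}\cap\{\langle\tau-\xi^5\rangle\sim L\}$ for dyadic $N,L\ge1$, and likewise $g_{N,L},h_{N,L}$. Since each part of $\|\cdot\|_{\hat{Z}^{s,a}}$ is a weighted sum over such boxes --- the $\ell_j^2\ell_k^1$ structure of $\hat{X}_{(2,1)}^{s,1/2}$ in the high-frequency region and the various $\ell^2$ / $\ell_k^1$-in-modulation structures of $\hat{X}_L^{a}$ in the low-frequency region --- it is enough to bound the dyadic pieces of the left-hand sides with constants that are summable in $(N,L)$; a $\delta$-power loss in the largest frequency, or geometric decay in well-separated modulations, suffices to carry out the summations, and matching the $\ell^1$-in-$k$ structure across the interface $|\xi|=1$ only requires being able to localize the output modulation to a single dyadic shell. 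By duality each of (\ref{BE-1})--(\ref{BE-2}) becomes an estimate for a trilinear convolution form $\int_{*}m\,\widehat{f_1}\widehat{f_2}\widehat{f_3}$ over $\{\xi_1+\xi_2+\xi_3=0,\ \tau_1+\tau_2+\tau_3=0\}$, where $m$ collects the frequency weights, the derivative factors $\xi$, $\xi^2$, and the output weight $\langle\tau_3-\xi_3^5\rangle^{-1}$, and (\ref{TE-1}) becomes a quadrilinear form treated the same way. The auxiliary $L_\xi^2L_\tau^1$ norms are handled alongside the main ones: Cauchy--Schwarz in $\tau$ against $\langle\tau-\xi^5\rangle^{-1/2-}\in L_\tau^2$ dominates them by an $L_{\tau,\xi}^2$ expression carrying $\langle\tau-\xi^5\rangle^{-1/2+}$ on the output, which is of the same form as the $\hat{X}^{s,a,-1/2}$-type contribution already appearing on the left, so the two estimates are proved together.

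The engine is the identity $\sum_{i=1}^3(\tau_i-\xi_i^5)=-5\xi_1\xi_2\xi_3(\xi_1^2+\xi_2^2+\xi_3^2)/2$ on the above diagonal, which gives $\max_i\langle\tau_i-\xi_i^5\rangle\gtrsim|\xi_1\xi_2\xi_3|\,N_{\max}^2$ with $N_{\max}:=\max_i|\xi_i|$; hence at least one modulation is large unless two frequencies nearly cancel or one frequency is very small. I would then organize the argument by (a) the frequency interaction type --- high$\times$high$\to$high, high$\times$high$\to$low, high$\times$low$\to$high, low$\times$low$\to$low --- and, within each, by which of $L_0,L_1,L_2$ realizes the maximum. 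In the generic cases (two comparable large frequencies, with the maximal modulation matching the resonance) the estimate closes via the bilinear $L^2$ smoothing estimates Lemma~\ref{lem_dy_1}--\ref{lem_dy_3}, Cauchy--Schwarz, and Young's inequality, distributing the $\xi^2$ (or $\xi$) derivative factors against the resonance gain $|\xi_1\xi_2\xi_3|N_{\max}^2$; crucially, in the high$\times$low$\to$high interaction the $|\xi|^a$-weight of $\hat{Z}^{s,a}$ with $a<0$ supplies exactly the extra negative powers of the low frequency that the resonance alone cannot recover, which is why one works in $H^{s,a}$ rather than in $H^s$.

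The substance lies in the two degenerate regions where the unmodified bilinear estimate (\ref{BE-3}) fails. The first is the high$\times$high$\to$high resonant interaction concentrated near $\{\tau=\xi^5/16,\ |\xi|\ge1\}$, where $\xi_1\approx\xi_2\approx\xi/2$, both input modulations are $O(1)$, and the output modulation is $\sim|\xi|^5$: here one is forced to the endpoint $b=1/2$, and the estimate is recovered by using the $\ell_k^1$ summation of $\hat{X}_{(2,1)}^{s,1/2}$ in the spirit of Bejenaru--Tao \cite{BT} --- the interaction lives on essentially one modulation shell on each side, so the $\ell^1$ norm costs only a constant while $b=1/2$ is the correct exponent for the remaining $L^2$ counting. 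The second is the low-frequency region $D_0=\{|\xi|\le1,\ |\tau|\sim|\xi|^{-5/3}\}$ (together with the $D_1$ / $D_2$ split at $a=-1/4$ and the $\ell_k^1$-in-modulation variants), where one must replace $b=1/2$ by the exponent dictated by Examples 1--2 --- $b=3a/5+9/10$ on $D_0$, interpolating between $b=3/4$ at $a=-1/4$ and $b=3/8$ at $a=-7/8$ --- following Kishimoto \cite{Ki09}; I would run a finer subdivision of the low-frequency interaction tracking $|\tau|$ against powers of $|\xi|$ and balance the modified modulation exponent against the $|\xi|^a$-weights. The constraints in (\ref{co_op}), in particular $s\ge-2a-2$ and $(s,a)\ne(-1/4,-7/8)$, are precisely what keep all of these balances non-negative. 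For (\ref{TE-1}) the cubic term carries only one derivative; two applications of the resonance/smoothing bounds --- or an $L_{t,x}^4$ Strichartz argument after peeling off one factor --- suffice, with the same low-frequency modifications invoked on the output.

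I expect the main obstacle to be the bookkeeping in these two degenerate regimes: verifying that the single choices $b=1/2$ with $\ell_k^1$ near $\{\tau=\xi^5/16\}$ and $b=3a/5+9/10$ (with its endpoint variants at $a=-1/4,-7/8$) on $D_0$ simultaneously close \emph{all} of the finitely many sub-cases of the high$\times$high and high$\times$low interactions, and that no logarithm is lost where the $\ell^1$ and $\ell^2$ structures meet across $|\xi|=1$. In particular the borderline lines $s=-1/4$ and $s=-2a-2$ and the excluded point $(-1/4,-7/8)$ must be shown to be genuinely critical --- i.e. the estimates hold with equality of the governing exponents exactly on (\ref{co_op}), matching the counterexamples of Examples 1--3 --- and arranging this is where the delicate part of the computation is concentrated.
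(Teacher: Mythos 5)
Your plan for the bilinear estimates \eqref{BE-1}--\eqref{BE-2} is essentially the paper's: dyadic decomposition in $\langle\xi\rangle$ and $\langle\tau-\xi^5\rangle$, case analysis by interaction type and by which modulation is maximal, the resonance identity $2^{k_{\max}}\gtrsim|\xi\xi_1(\xi-\xi_1)(\xi^2+\xi_1^2+(\xi-\xi_1)^2)|$, and Lemmas~\ref{lem_dy_1}--\ref{lem_dy_3} against the weights built into $\hat{Z}^{s,a}$. But the one place where this scheme does not close by itself is exactly the place you defer to ``bookkeeping'': the high$\times$low$\to$high interaction in which the low-frequency factor lies in $D_1$ and the dominant modulation is comparable to the resonance $|\xi_2|2^{4j}$. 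There the straightforward summation over output modulation shells produces a factor growing like $j$ (a logarithm of the high frequency), and on the critical lines $s=-1/4$ and $s=-2a-2$ of \eqref{co_op} there is no exponent slack left to absorb it. The paper removes this loss by a specific device you do not supply: splitting the low factor's modulation range into $B_{[5j/2+\gamma,\,5j/2+\alpha]}$, proving the pair of bounds \eqref{es_hlh_a}--\eqref{es_hlh_b} with gains $2^{-\varepsilon_3\alpha}$ and $\alpha\,2^{-\varepsilon_3\gamma}$, and then iterating along the decreasing sequence $a_0=3j/2$, $a_{n+1}=a_n/2$, using that $\sum_n 1/a_n$ is bounded uniformly in $j$. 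Flagging ``no logarithm is lost'' as the expected obstacle is not the same as closing it; as written, this case of your argument fails at the endpoints. (Relatedly, your reduction of the $L^2_\xi L^1_\tau$ norms by Cauchy--Schwarz against $\langle\tau-\xi^5\rangle^{-1/2-\varepsilon}$ costs an $\varepsilon$ in the modulation exponent; the paper instead uses the exact block estimate $\|F\|_{L^1_\tau}\lesssim\sum_k 2^{k/2}\|F\|_{L^2_\tau(B_k)}$, which matters in the interactions where the constants in the proof have no slack, e.g.\ high$\times$high$\to$low at $s=a=-1/4$.)

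The treatment of the trilinear estimate \eqref{TE-1} is also thinner than what is actually needed. At $s=-1/4$ the cubic term is not dispatched by ``two applications of the resonance/smoothing bounds'' or an $L^4$ Strichartz peeling: the paper first reduces \eqref{TE-1}, via Tao's composition rule \eqref{com_m} and the $TT^*$ identity \eqref{id_m} for $[k;Z]$-multipliers after splitting $\mathbb{R}^6$ into the regions $\Omega_0,\dots,\Omega_5$, to the three bilinear estimates \eqref{TBE-1}--\eqref{TBE-3}; the crucial high$\times$high estimate \eqref{TBE-1}, with weights $\langle\xi_1\rangle^{-s}\langle\xi_2\rangle^{1/2}$ and exponents $1/2$ and $1/2-\varepsilon$, is itself a nontrivial dyadic block estimate imported from Chen--Li--Miao--Wu \cite{CLMW}, and the low-frequency factors enter through the specific norms $\hat{X}_L^{-a-1,1/4}$ and $\hat{X}_L^{a,3/8}$, which require the separate case analyses of Lemmas~\ref{lem_TE_2} and \ref{lem_TE_3} (including the subcases $2^{k_{\max}}\sim|\xi_1|2^{4j}$ versus $2^{k_{\max}}\sim 2^{k_{\mathrm{med}}}$). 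Your sketch does not indicate how the $|\xi|^{-a-1}$ and $|\xi|^{a}$ low-frequency weights would be produced or used, nor why a crude peeling argument would survive the endpoint $s=-1/4$; either supply an argument at the level of \eqref{TBE-1}--\eqref{TBE-3} or cite the block-estimate machinery as the paper does.
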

We omit the proof of (\ref{BE-1}) because we immediately obtain (\ref{BE-1}) from (\ref{BE-2}). Therefore we only prove (\ref{BE-2}) and (\ref{TE-1}) in this paper. 

We use $A \lesssim B$ to denote $A \leq C B$ for some positive constant $C$ and write $A \sim B$ to mean $A \lesssim B$ and $B \lesssim A$. 
The rest of this paper is planned as follows. In Section 2, we give some preliminary lemmas. By using these lemmas, we prove the bilinear estimate (\ref{BE-2}) in Section 3 and the trilinear estimate (\ref{TE-1}) in Section 4. In Section 5, we give the proofs of Theorem~\ref{thm_well}, Proposition~\ref{prop_apr} and Theorem~\ref{thm_ill}. 

\vspace{1em}
 
\noindent
\textbf{Acknowledgement.} The author would like to appreciate his adviser Professor Kotaro Tsugawa for many helpful conversation and encouragement and thank Dr. Kishimoto for helpful comments.

\section{Preliminaries}
In this section, we prepare the smoothing effects and linear estimates to show the main theorems and the nonlinear estimates. When we use the variables $(\tau,\xi)$, $(\tau_1,\xi_1)$ and $(\tau_2,\xi_2)$, we always assume the relation
\begin{align*}
(\tau,\xi)=(\tau_1,\xi_1) + (\tau_2,\xi_2).
\end{align*}
We mention the smoothing effects for the operator $e^{t \p_x^5 }$. 

\begin{lem} \label{lem_dy_1} 
Suppose that $f,g$ is supported on a single $A_{j}$ for $j \geq 0$. Then 
\begin{align} \label{es_dy_1-1}
\| |\xi|^{3/4} f*g \|_{L_{\tau,\xi}^2} \lesssim  
\| f \|_{ \hat{X}_{(2,1)}^{0,1/2}} \| g \|_{\hat{X}_{(2,1)}^{0,1/2}}.
\end{align}
Moreover if 
\begin{align*}
K:=\inf \{ |\xi_1-\xi_2|~;~\exists \tau_1, \tau_2 \text{ s.t. } (\tau_1,\xi_1) \in \text{\upshape supp}~f,~
(\tau_2,\xi_2) \in \text{\upshape supp}~g  \}>0, 
\end{align*}
then we have
\begin{align} \label{es_dy_1-2}
\| |\xi|^{1/2}~f*g \|_{L_{\tau,\xi}^{2}} \lesssim K^{-3/2} 
\| f \|_{\hat{X}_{(2,1)}^{0,1/2}} \| g \|_{\hat{X}_{(2,1)}^{0,1/2}}.
\end{align}
\end{lem}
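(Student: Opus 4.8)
The plan is to deduce both inequalities from a Cauchy--Schwarz estimate on the convolution, the arithmetic input being the resonance identity
\[
(\tau-\xi^5)-(\tau_1-\xi_1^5)-(\tau_2-\xi_2^5)=\Phi(\xi_1),\qquad\Phi(\xi_1):=\xi_1^5+(\xi-\xi_1)^5-\xi^5 ,
\]
together with $\Phi'(\xi_1)=5\xi(\xi_1-\xi_2)(\xi_1^2+\xi_2^2)$ and $\Phi''(\xi_1)=10\xi\bigl((\xi_1^2+\xi_2^2)+(\xi_1-\xi_2)^2\bigr)$. Since replacing $f,g$ by $|f|,|g|$ leaves the right-hand sides of \eqref{es_dy_1-1}--\eqref{es_dy_1-2} unchanged and can only enlarge the left-hand sides, we may assume $f,g\ge0$. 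Writing $f=\sum_{k_1\ge0}f_{k_1}$ with $f_{k_1}:=\chi_{B_{k_1}}f$, and similarly $g=\sum_{k_2}g_{k_2}$, and using that $\|f\|_{\hat{X}_{(2,1)}^{0,1/2}}\sim\sum_{k_1}2^{k_1/2}\|f_{k_1}\|_{L^2}$ (likewise for $g$) because $f$ is supported in a single block $A_j$, the triangle inequality reduces both claims to the single-pair bound
\[
\| |\xi|^{3/4}f_{k_1}*g_{k_2}\|_{L^2_{\tau,\xi}}\lesssim2^{(k_1+k_2)/2}\|f_{k_1}\|_{L^2}\|g_{k_2}\|_{L^2}
\]
for all $k_1,k_2\ge0$, together with the same inequality with $|\xi|^{1/2}$ in place of $|\xi|^{3/4}$ and an extra factor $K^{-3/2}$ on the right (under the separation hypothesis).

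For a fixed $(\tau,\xi)$, set $(\tau_2,\xi_2)=(\tau,\xi)-(\tau_1,\xi_1)$ and let $E_{\tau,\xi}$ be the set of $(\tau_1,\xi_1)$ with $(\tau_1,\xi_1),(\tau_2,\xi_2)\in A_j$, $\langle\tau_1-\xi_1^5\rangle\sim2^{k_1}$ and $\langle\tau_2-\xi_2^5\rangle\sim2^{k_2}$; then Cauchy--Schwarz in $(\tau_1,\xi_1)$ and Fubini give, for any $\alpha\ge0$,
\[
\||\xi|^{\alpha}f_{k_1}*g_{k_2}\|_{L^2}^2\le\Big(\sup_{\tau,\xi}|\xi|^{2\alpha}|E_{\tau,\xi}|\Big)\,\|f_{k_1}\|_{L^2}^2\|g_{k_2}\|_{L^2}^2 ,
\]
so it suffices to prove $\sup_{\tau,\xi}|\xi|^{3/2}|E_{\tau,\xi}|\lesssim2^{k_1+k_2}$ and, under separation, $\sup_{\tau,\xi}|\xi|\,|E_{\tau,\xi}|\lesssim K^{-3}2^{k_1+k_2}$. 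For fixed $\xi_1$ the admissible $\tau_1$ form a set of measure $\lesssim\min(2^{k_1},2^{k_2})$ (an intersection of two intervals of those lengths), while the identity above forces $\Phi(\xi_1)$ into an interval $J$ with $|J|\lesssim\max(2^{k_1},2^{k_2})$; hence
\[
|E_{\tau,\xi}|\lesssim\min(2^{k_1},2^{k_2})\cdot\bigl|\{\xi_1:\langle\xi_1\rangle\sim2^j,\ \langle\xi-\xi_1\rangle\sim2^j,\ \Phi(\xi_1)\in J\}\bigr| .
\]
When $j\ge1$ we have $\xi_1^2+\xi_2^2\gtrsim2^{2j}$ on this set, so $|\Phi''|\gtrsim|\xi|2^{2j}$ and the second-order sublevel (van der Corput) estimate yields $|\{\xi_1:\Phi(\xi_1)\in J\}|\lesssim(|J|/(|\xi|2^{2j}))^{1/2}$; when $j=0$ the $\xi_1$-set lies in a fixed bounded interval, so its measure is $\lesssim1$. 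Under the separation hypothesis, $|\xi_1-\xi_2|\ge K$ and $\xi_1^2+\xi_2^2\ge\tfrac12(\xi_1-\xi_2)^2\ge\tfrac12K^2$ give $|\Phi'|\gtrsim|\xi|K^3$, so $\Phi$ is monotone on each of the $O(1)$ relevant subintervals and $|\{\xi_1:\Phi(\xi_1)\in J\}|\lesssim|J|/(|\xi|K^3)$.

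Assembling: for $j\ge1$, using $|\xi|\lesssim2^j$ and $\max(2^{k_1},2^{k_2})\ge1$,
\[
|\xi|^{3/2}|E_{\tau,\xi}|\lesssim|\xi|^{3/2}\min(2^{k_1},2^{k_2})\Big(\tfrac{\max(2^{k_1},2^{k_2})}{|\xi|2^{2j}}\Big)^{1/2}=\tfrac{|\xi|}{2^{j}}\min(2^{k_1},2^{k_2})\max(2^{k_1},2^{k_2})^{1/2}\lesssim2^{k_1+k_2},
\]
for $j=0$ it is immediate that $|\xi|^{3/2}|E_{\tau,\xi}|\lesssim\min(2^{k_1},2^{k_2})\lesssim2^{k_1+k_2}$, and under separation $|\xi|\,|E_{\tau,\xi}|\lesssim\min(2^{k_1},2^{k_2})\max(2^{k_1},2^{k_2})/K^3\lesssim K^{-3}2^{k_1+k_2}$. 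Summing the single-pair bounds over $k_1,k_2$ as in the first paragraph finishes the proof. The one genuinely delicate step is the sublevel estimate: $\Phi'$ vanishes (simply) where $\xi_1=\xi_2$, so the frequency set can be controlled only with a square-root loss weighted by $(\inf|\Phi''|)^{-1/2}\sim(|\xi|2^{2j})^{-1/2}$, and this is precisely what pins the gain at $|\xi|^{3/4}$ in \eqref{es_dy_1-1}; the separation hypothesis removes that critical point and upgrades the estimate to the $K$-dependent bound \eqref{es_dy_1-2}.
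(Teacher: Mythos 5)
Your proposal is correct, and its overall architecture is the same as the paper's: decompose $f,g$ into modulation blocks $B_{k_1},B_{k_2}$ (which is exactly what the $\ell^1_k$ structure of $\hat{X}_{(2,1)}^{0,1/2}$ is for), apply Cauchy--Schwarz and Fubini to reduce each block pair to a uniform bound on the measure of the slice set $E_{\tau,\xi}$, bound the $\tau_1$-variation by $\min(2^{k_1},2^{k_2})$, and control the $\xi_1$-variation through the resonance identity; summing $2^{(k_1+k_2)/2}\|f_{k_1}\|_{L^2}\|g_{k_2}\|_{L^2}$ then reproduces the product of norms. The one place where you genuinely diverge is the $\xi_1$-measure estimate: the paper rewrites the resonance function in the symmetric variable $2\xi_1-\xi$, solves explicitly for the admissible range of $|2\xi_1-\xi|^2$, and estimates the length of that range by a difference-of-square-roots computation (its displays (\ref{es_M_1})--(\ref{es_M_2})), whereas you invoke lower bounds $|\Phi'|\gtrsim |\xi|K^3$ (separated case) and $|\Phi''|\gtrsim |\xi|2^{2j}$ together with the standard monotonicity/van der Corput sublevel-set estimate $|\{\Phi\in J\}|\lesssim (|J|/\inf|\Phi''|)^{1/2}$. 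Your route is cleaner and more robust (no explicit inversion, and it transparently explains why the unseparated case loses a square root while separation upgrades the bound), at the cost of importing the sublevel-set lemma; the paper's computation is elementary and self-contained but notationally heavier. Both yield $m(\tau,\xi)\lesssim\min\{K^{-3}2^{k_1+k_2}|\xi|^{-1},\,2^{k_1+k_2}|\xi|^{-3/2}\}$, which is all that is needed, so the argument goes through.
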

\begin{proof}
It suffices to show that
\begin{align} \label{sm_1-1}
& \Bigl| \int_{\mathbb{R}^2} \int_{\mathbb{R}^2}  
f(\tau_1,\xi_1) g(\tau-\tau_1,\xi-\xi_1) 
 h(\tau,\xi) d\tau_1 d\xi_1 d\tau d\xi \Bigr| \nonumber \\
& \hspace{1.8cm} \lesssim 
 2^{k_1/2} 2^{k_2/2} \| f \|_{L_{\tau,\xi}^2}
\| g \|_{L_{\tau,\xi}^2} \| |\xi|^{-3/4} h \|_{L_{\tau,\xi}^2}
\end{align}
and 
\begin{align} \label{sm_1-2}
& \Bigl| \int_{\mathbb{R}^2} \int_{\mathbb{R}^2}  
f(\tau_1,\xi_1) g(\tau-\tau_1,\xi-\xi_1) h(\tau,\xi) 
d\tau_1 d\xi_1 d\tau d\xi \Bigr| \nonumber \\
& \hspace{1.8cm}  \lesssim 
K^{-3/2}~2^{k_1/2} 2^{k_2/2} \| f \|_{L_{\tau,\xi}^2}
\| g \|_{L_{\tau,\xi}^2} \| |\xi|^{-1/2} h \|_{L_{\tau,\xi}^2},
\end{align}
when $f$, $g$ are restricted to $B_{k_1}$, $B_{k_2}$ for $k_1, k_2 \geq 0$. That is the reason why we use (\ref{sm_1-1}) and the triangle inequality to have
\begin{align*}
& \Bigl| \int_{\mathbb{R}^2} \int_{\mathbb{R}^2} f(\tau_1,\xi_1) g(\tau-\tau_1, \xi-\xi_1) h(\tau,\xi) d\tau_1 d\xi_1 d\tau d\xi \Bigr| \\
\lesssim & 
\sum_{k_1} \sum_{k_2} \Bigl| \int_{\mathbb{R}^2} \int_{\mathbb{R}^2}  
(\chi_{B_{k_1}} f) (\tau_1,\xi_1) ( \chi_{B_{k_2}}g) (\tau-\tau_1,\xi-\xi_1) d\tau_1 d\xi_1 
 h(\tau,\xi) d\tau d\xi \Bigr| \\
 \lesssim &
\sum_{k_1} 2^{k_1/2} \| f \|_{L_{\tau,\xi}^2 (B_{k_1})} 
\sum_{k_2} 2^{k_2/2} \| g \|_{L_{\tau,\xi}^2 (B_{k_2})} 
\| |\xi|^{-3/4} h \|_{L_{\tau,\xi}^2},
\end{align*}
which implies (\ref{es_dy_1-1}). Moreover, if we assume (\ref{sm_1-2}), we obtain (\ref{es_dy_1-2}) in the same manner as above. 

We prove (\ref{sm_1-1}) and (\ref{sm_1-2}). We use Schwarz's inequality twice and Fubini's theorem to have 
\begin{align*} 
& \Bigl|\int_{\mathbb{R}^2} \int_{\mathbb{R}^2} f(\tau_1, \xi_1) g(\tau-\tau_1,\xi-\xi_1) h(\tau,\xi) d\tau_1 d\xi_1 d\tau d\xi \Bigr| \\
& \hspace{1.2cm} \lesssim \sup_{(\tau,\xi) \in \mathbb{R}^2} m(\tau,\xi)^{1/2}
\| f \|_{L_{\tau,\xi}^2} \| g \|_{L_{\tau,\xi}^2} \| h \|_{L_{\tau,\xi}^2}, 
\end{align*}
where 
\begin{align*}
m(\tau,\xi) = \int \chi_{\Lambda_1} (\tau,\xi,\tau_1, \xi_1) d\tau_1 d\xi_1, 
\end{align*}
and 
\begin{align*}
\Lambda_1:= \bigl \{ (\tau,\xi ,\tau_1,\xi_1) \in \mathbb{R}^4~;~ 
(\tau_1, \xi_1) \in \text{\upshape supp } f,~ (\tau-\tau_1,\xi-\xi_1) \in 
\text{\upshape supp } g \bigr \}. 
\end{align*}
Therefore (\ref{sm_1-1}) and (\ref{sm_1-2}) are reduced to the estimate 
\begin{align} \label{m_es}
m(\tau,\xi) \lesssim \min \bigl\{ K^{-3}~2^{k_1+k_2} |\xi|^{-1},  2^{k_1+k_2}
|\xi|^{-3/2} \bigr\},
\end{align}
and we estimate $m$. Here we fix $\tau$, $\xi \neq 0$ and consider the variation of $\xi_1$. The identity 
\begin{align*}
(\tau-\frac{\xi^5}{16})-(\tau_1-\xi_1^5)-\bigl\{ (\tau-\tau_1)-(\xi-\xi_1)^5 
\bigr\}
=\frac{5}{16} \xi (2\xi_1 -\xi)^2 \bigl\{ (2\xi_1-\xi)^2 +2 \xi^2  \bigr\}  
\end{align*}
implies
\begin{align}
& \max \biggl\{ \Bigl\{ \frac{16}{5} \frac{ |M- C(2^{k_1}+ 2^{k_2})|   }{|\xi|} + \xi^4 \Bigr\}^{1/2}-\xi^2,~ K^2 \biggr\} \nonumber \\
& \hspace{1.2cm} \leq |2\xi_1-\xi|^2 
 \leq  \Bigl\{ \frac{16}{5} \frac{ M+ C(2^{k_1}+ 2^{k_2})   }{|\xi|} + \xi^4 \Bigr\}^{1/2}-\xi^2.
\end{align}
where $M=|\tau-\xi^5/16|$ and $C$ is some positive constant. If 
\begin{align*}
K \geq \biggl\{ \Bigl\{ \frac{16}{5} \frac{|M-C (2^{K_1}+2^{K_2})|}{|\xi|}+\xi^4 \Bigr\}^{1/2}-\xi^2 \biggr
\}^{1/2}, 
\end{align*}
then the variation of $|2\xi_1- \xi|$ is bounded by 
\begin{align} \label{es_M_1}
& \biggl[ \Bigl\{ \frac{16}{5} \frac{M+C(2^{k_1} +2^{k_2})}{ |\xi| } + \xi^4 \Bigr\}^{1/2}-\xi^2 \biggr]^{1/2} -K
=  \frac{  M_{\tau,\xi}^{1/2} -(K^2 +\xi^2)}
{ (M_{\tau,\xi}^{1/2}-\xi^2)^{1/2} + K} \nonumber \\
& \hspace{1.2cm} \leq  \frac{ \frac{32C}{5 |\xi|} (2^{k_1} +2^{k_2})}
{\bigl\{ (M_{\tau,\xi}^{1/2}-\xi^2 )^{1/2}+K \bigr\} \bigl\{ M_{\tau,\xi}^{1/2}+ (K^2+\xi^2) \bigr\}
},
\end{align}
where
\begin{align*}
M_{\tau,\xi}:=\frac{16}{5} \frac{M+C(2^{k_1} +2^{k_2})}{ |\xi| } + \xi^4.
\end{align*}
We note that there exists $\delta_1>0$ such that 
\begin{align} \label{es_M_2}
( M_{\tau,\xi}^{1/2}-\xi^2 )^{1/2} \geq \delta_{1} |\xi|^{-3} 
(2^{k_1/2}+2^{k_2/2}).
\end{align}
Following (\ref{es_M_1}) and (\ref{es_M_2}), 
the variation of $\xi_1$ is at most 
\begin{align} \label{es_{var_1}} 
O \Bigl( \min \bigl\{ |\xi|^{-1} K^{-3} (2^{k_1}+2^{k_2}),~ |\xi|^{-3/2} (2^{3k_1/4} +2^{3 k_2 /4})   \bigr\} \Bigr).
\end{align}
When 
\begin{align*}
K \leq \biggl[ \Bigl\{ \frac{16}{5} \frac{|M-C(2^{k_1} +2^{k_2})|}{ |\xi| } + \xi^4 \Bigr\}^{1/2}-\xi^2 \biggr]^{1/2},
\end{align*}
the variation of $\xi_1$ is bounded by (\ref{es_{var_1}}) in the same manner as above. Next we also fix $\xi_1$. Then 
\begin{align*}
|\tau_1-\xi_1^5| \lesssim 2^{k_1} \text{ and } 
|(\tau-\tau_1)-(\xi-\xi_1)^5| \lesssim 2^{k_2}
\end{align*}
imply that the variation of $\tau_1$ is at most 
$O \bigl( \min \{2^{k_1},~2^{k_2}  \} \bigr) $. 
Combining this and (\ref{es_{var_1}}), we obtain 
\begin{align*} 
m(\tau,\xi) \lesssim \Bigl\{ |\xi|^{-1} K^{-3} 2^{k_1+k_2},~ |\xi|^{-3/2}
\max\{ 2^{3 k_1/4}, 2^{3 k_2/4} \} \min\{2^{k_1}, 2^{k_2} \} \Bigr\},
\end{align*}
which shows (\ref{m_es}). 
\end{proof}

\begin{lem} \label{lem_dy_2}
Assume that $g$ is supported on a single $A_j$ for $j \geq 0$ and $f$ is an arbitrary test function. Then 
\begin{align} \label{es_dy_2-1}
\| f* g \|_{L_{\tau,\xi}^2( B_k)}
\lesssim 2^{3k/8}~  
\| |\xi|^{-3/4} f \|_{L_{\tau,\xi}^2} 
\| g \|_{\hat{X}_{(2,1)}^{0,1/2}}.
\end{align}
Moreover if a non-empty set $\Omega \subset \mathbb{R}^2$ satisfies 
\begin{align*}
K_1:=\inf \{|\xi+\xi_2|~;~\exists \tau,\tau_2 \text{ s.t. } 
(\tau,\xi) \in \Omega,
~(\tau_2,\xi_2) \in \text{\upshape supp}~g  \}>0,
\end{align*}
then we have
\begin{align} \label{es_dy_2-2}
\| f* g \|_{L_{\tau,\xi}^2(\Omega \cap B_k)}
\lesssim  2^{k/2}~K_1^{-3/2}~  
\| |\xi|^{-1/2} f \|_{L_{\tau,\xi}^2} \| g \|_{\hat{X}_{(2,1)}^{0,1/2} }.
\end{align}
\end{lem}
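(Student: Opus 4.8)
The plan is to prove both inequalities by duality together with a counting (measure) argument modeled on the proof of Lemma~\ref{lem_dy_1}. The one new point is that $f$ now carries no modulation weight, so the modulation gain that in Lemma~\ref{lem_dy_1} came from the $B_{k_1}$-localization of $f$ must here be supplied by the localization of the output to $B_k$.

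First I would reduce. By the triangle inequality and the $\ell^1_k$-structure of the $\hat{X}_{(2,1)}^{0,1/2}$-norm it suffices to treat $g$ supported on a single $A_j\cap B_{k_2}$; dualizing against $h\in L^2$ supported on $B_k$ (resp.\ on $\Omega\cap B_k$), \eqref{es_dy_2-1} is reduced to
\[
\Bigl|\iint f(\tau_1,\xi_1)\,g(\tau-\tau_1,\xi-\xi_1)\,h(\tau,\xi)\,d\tau_1\,d\xi_1\,d\tau\,d\xi\Bigr|
\lesssim 2^{3k/8}\,2^{k_2/2}\,\bigl\||\xi|^{-3/4}f\bigr\|_{L^2}\,\|g\|_{L^2}\,\|h\|_{L^2},
\]
and \eqref{es_dy_2-2} to the same inequality with $2^{3k/8}$ replaced by $2^{k/2}$, with $|\xi|^{-3/4}$ by $|\xi|^{-1/2}$, and with an extra factor $K_1^{-3/2}$. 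Since $f$ is still arbitrary I then decompose $f=\sum_{j_1,k_1}f_{j_1,k_1}$ with $f_{j_1,k_1}$ supported on $A_{j_1}\cap B_{k_1}$.

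For each localized triple I would run the two Schwarz inequalities exactly as in the proof of Lemma~\ref{lem_dy_1}: the trilinear form is at most $\sup_{(\tau,\xi)}m(\tau,\xi)^{1/2}$ times the three $L^2$-norms, where $m(\tau,\xi)$ is the measure of the admissible $(\tau_1,\xi_1)$. To bound $m$ I use the resonance identity recorded in that proof,
\[
(\tau-\xi^5)-(\tau_1-\xi_1^5)-\bigl((\tau-\tau_1)-(\xi-\xi_1)^5\bigr)=\xi_1^5+(\xi-\xi_1)^5-\xi^5=-5\,\xi_1\xi_2\,\xi\,\bigl(\xi_1^2+\xi_1\xi_2+\xi_2^2\bigr),
\]
so the resonance vanishes only at the quartic-degenerate configurations $\xi_1\xi_2\xi=0$; then, as in \eqref{es_M_1}--\eqref{es_M_2}, the $\tau_1$-variation is $O(\min\{2^{k_1},2^{k_2}\})$ and the $\xi_1$-variation is $O(|\xi|^{-3/2}(2^{3k_1/4}+2^{3k_2/4}))$ in general, improving to the $K_1^{-3/2}|\xi|^{-1/2}$ shape under the separation hypothesis -- here $K_1=\inf|\xi+\xi_2|$ is exactly the nondegenerate factor of $\xi(\xi_1-\xi_2)$ written in the output and $g$-frequency variables, so it plays the role of $K$ in \eqref{es_dy_1-2}.

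The main work, and the hard part, is summing the pieces $f_{j_1,k_1}$ against $\bigl\||\xi|^{-3/4}f\bigr\|_{L^2}$, which has no modulation weight, so one must show $m^{1/2}$ decays fast enough in $(j_1,k_1)$ relative to $(k,k_2)$ to generate the stated constants. Here the resonance identity does the real work: if the output modulation dominates the resonance then $2^k\gtrsim|\xi|\,|\xi_1-\xi_2|\,(\xi_1^2+\xi_2^2)$, and $2^{3k/8}$ absorbs both the weight loss and the factor $\langle\xi_1\rangle^{3/4}\sim 2^{3j_1/4}$ incurred in passing from $\|f_{j_1,k_1}\|_{L^2}$ to $\bigl\||\xi|^{-3/4}f_{j_1,k_1}\bigr\|_{L^2}$; for the balanced frequencies $j_1\sim j$ this is the endpoint, and it is precisely the full $|\xi|^{-3/2}$ gain coming from the quartic vanishing that upgrades the easy $2^{k/2}$ to $2^{3k/8}$, just as in \eqref{es_dy_1-1}. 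If instead $f$ or $g$ carries the resonance, i.e.\ $\max\{2^{k_1},2^{k_2}\}\gtrsim|\xi_1^5+(\xi-\xi_1)^5-\xi^5|$, then either the factor $2^{k_2/2}$ from $\|g\|_{\hat{X}_{(2,1)}^{0,1/2}}$ or a direct count using that the admissible frequencies and modulations are then pinned by the identity closes the estimate; the high--low regime $j_1\gg j$, where the resonance has size $\sim 2^{4j_1+j}$ and forces one of these large modulations, is the delicate case that drives the whole bookkeeping. Finally I would undo the reduction, and obtain \eqref{es_dy_2-2} by the same argument with the $K_1$-improved count.
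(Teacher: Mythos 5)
There is a genuine structural gap: you have kept the orientation of the Cauchy--Schwarz/counting step from Lemma~\ref{lem_dy_1} (fix the output pair $(\tau,\xi)$ and measure the admissible $(\tau_1,\xi_1)$), and then tried to compensate for the missing modulation weight on $f$ by decomposing $f$ over $A_{j_1}\cap B_{k_1}$ and summing. This cannot close, for three concrete reasons. First, with that orientation the counting argument produces the weight $|\xi|^{-3/4}$ (or $|\xi|^{-1/2}$) in the \emph{output} frequency, attached in effect to $h$, exactly as in \eqref{sm_1-1}--\eqref{sm_1-2}; but \eqref{es_dy_2-1}--\eqref{es_dy_2-2} require the weight on $f$, i.e.\ in the variable $\xi_1$, and in this lemma the output carries no frequency restriction, so in the high$\times$high$\to$low regime $|\xi|^{-3/4}$ is huge and cannot be traded for $|\xi_1|^{-3/4}$. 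Second, when $2^{k_1}$ is the dominant modulation your per-piece bound grows like a positive power of $2^{k_1}$, and the right-hand side $\||\xi|^{-3/4}f\|_{L^2}$ offers only $\ell^2_{k_1}$ control with no modulation weight, so the sum over $k_1$ diverges; the vague appeal to ``a direct count pinned by the identity'' does not produce the needed decay in $k_1$. Third, the separation parameter $K_1=\inf|\xi+\xi_2|=\inf|2\xi-\xi_1|$ is adapted to a count over the output variable at fixed $\xi_1$ (the Jacobian of $\xi\mapsto(\xi-\xi_1)^5-\xi^5$ contains the factor $\xi_1(2\xi-\xi_1)$), not to a count over $\xi_1$ at fixed $\xi$, which would instead require a lower bound on $|2\xi_1-\xi|=|\xi_1-\xi_2|$; so your identification of $K_1$ with the role of $K$ in \eqref{es_dy_1-2} is not correct in the orientation you chose.

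The missing idea, which is how the paper proceeds, is to swap the roles in the Cauchy--Schwarz step: after the same reduction to $g\in B_{k_2}$ and $h$ supported on $B_k$ (resp.\ $\Omega\cap B_k$) as in \eqref{red_dy_1}--\eqref{red_dy_2}, take the supremum over the $f$-variables $(\tau_1,\xi_1)$ and estimate the measure $m_1(\tau_1,\xi_1)$ of the set of $(\tau,\xi)$ with $(\tau,\xi)\in\operatorname{supp}h$ and $(\tau-\tau_1,\xi-\xi_1)\in\operatorname{supp}g$. Using the identity
$(\tau_1-\tfrac{\xi_1^5}{16})-(\tau-\xi^5)+\{(\tau-\tau_1)-(\xi-\xi_1)^5\}=\tfrac{5}{16}\,\xi_1(2\xi-\xi_1)^2\{(2\xi-\xi_1)^2+2\xi_1^2\}$,
centered at $\xi_1$, one gets $m_1\lesssim\min\{K_1^{-3}|\xi_1|^{-1}2^{k+k_2},\ |\xi_1|^{-3/2}2^{3k/4}2^{k_2}\}$, which places the powers of $|\xi_1|^{-1}$ (hence the weight on $f$), uses only the modulation localizations of $g$ and of the output, and makes any modulation decomposition of $f$ unnecessary. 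Your reduction step and the use of the $\ell^1_k$ structure of $\|g\|_{\hat{X}_{(2,1)}^{0,1/2}}$ are fine; it is the orientation of the counting that must be reversed.
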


\begin{proof}
If $g$ is restricted to $B_{k_2}$ for $k_2 \geq 0$, it suffices to show 
\begin{align} \label{red_dy_1}
& \Bigl| \int_{\mathbb{R}^2 } \int_{ \mathbb{R}^2 } f(\tau_1,\xi_1) g(\tau-\tau_1,\xi-\xi_1) h(\tau,\xi) d \tau d\xi d\tau_1 d\xi_1 \Bigr| \nonumber \\
& \hspace{1.2cm} \lesssim 2^{3k/8} 2^{k_2/2} \| |\xi|^{-3/4} f \|_{L_{\tau,\xi}^2} \| g \|_{L_{\tau,\xi}^2} \| h \|_{L_{\tau,\xi}^2}
\end{align}
for $h \in L_{\tau,\xi}^2 (B_k)$ and 
\begin{align} \label{red_dy_2}
& \Bigl| \int_{\mathbb{R}^2 } \int_{ \mathbb{R}^2 } f(\tau_1,\xi_1) g(\tau-\tau_1,\xi-\xi_1) h(\tau,\xi) d \tau d\xi d\tau_1 d\xi_1 \Bigr| \nonumber \\
& \hspace{1.8cm} \lesssim K_1^{-3/2}~ 2^{k/2+ k_2/2} \| |\xi|^{-1/2} f \|_{L_{\tau,\xi}^2} \| g \|_{L_{\tau,\xi}^2} \| h \|_{L_{\tau,\xi}^2}
\end{align}
for $h \in L_{\tau,\xi}^2 (B_k \cap \Omega)$. That is the reason why we use (\ref{red_dy_1}) and the triangle inequality to have 
\begin{align*}
& \Bigl| \int_{\mathbb{R}^2} \int_{\mathbb{R}^2} f(\tau_1,\xi_1) g (\tau-\tau_1,\xi-\xi_1) h(\tau,\xi) d\tau d\xi d\tau_1 d\xi_1 \Bigr| \\
\lesssim & 
\sum_{k_2} \Bigl| \int_{\mathbb{R}^2 } \int_{ \mathbb{R}^2 } f(\tau_1,\xi_1) 
( \chi_{B_{k_2}} g) (\tau-\tau_1,\xi-\xi_1) h(\tau,\xi) d \tau d\xi d\tau_1 d\xi_1 \Bigr| \\
\lesssim &  2^{3k/8} \| |\xi|^{-3/4} f\|_{L_{\tau,\xi}^2}  
\sum_{k_2 } 
2^{k_2/2} \| g \|_{L_{\tau,\xi}^2(B_{k_2})} \| h \|_{L_{\tau,\xi}^2},
\end{align*}
which implies (\ref{es_dy_2-1}). 
Moreover, if we assume (\ref{red_dy_2}), we use the triangle inequality to 
obtain (\ref{es_dy_2-2}) in the same manner as above.

We prove (\ref{red_dy_1}) and (\ref{red_dy_2}). We use Schwarz's inequality twice and Fubini's theorem to have 
\begin{align*}
& \Bigl| \int_{\mathbb{R}^2 } \int_{ \mathbb{R}^2 } f(\tau_1,\xi_1) g(\tau-\tau_1,\xi-\xi_1) h(\tau,\xi) d \tau d\xi d\tau_1 d\xi_1 \Bigr| \\
& \hspace{1.2cm} 
\lesssim \sup_{(\tau_1,\xi_1 ) \in \mathbb{R}^2} m_1(\tau_1,\xi_1)^{1/2} 
\| f \|_{L_{\tau,\xi}^2} \| g \|_{L_{\tau,\xi}^2} \| h \|_{L_{\tau,\xi}^2},
\end{align*}
where 
\begin{align*}
m_1(\tau_1,\xi_1):=\int_{\mathbb{R}^2} \chi_{\Lambda_2} (\tau,\xi,\tau_1,\xi_1) d\tau d\xi
\end{align*}
and 
\begin{align*}
\Lambda_2:=\bigl\{ (\tau,\xi,\tau_1,\xi_1) \in \mathbb{R}^4~;~ (\tau-\tau_1,\xi-\xi_1) \in \text{\upshape supp } f, ~~
(\tau,\xi) \in \text{\upshape supp } h  \bigr\}.
\end{align*}
Therefore (\ref{red_dy_1}) and (\ref{red_dy_2}) are reduced to the estimate. 
\begin{align} \label{es_{m_1}} 
m_1(\tau_1,\xi_1) \lesssim \min \Bigl\{ K_1^{-3} |\xi_1|^{-1} 2^{k+k_2},~
|\xi_1|^{-3/2} 2^{3k/4} 2^{k_2} \Bigr\}.
\end{align}
Now we fix $\tau_1$ and $\xi_1 \neq 0$ and estimate $m_1$. We use the identity 
\begin{align*}
(\tau_1- \frac{\xi_1^5}{16}) -(\tau-\xi^5)+ \bigl\{ (\tau-\tau_1)-(\xi-\xi_1)^5  \bigr\} = 
\frac{5}{16} \xi_1 (2\xi-\xi_1)^2 \bigl\{ (2\xi-\xi_1)^2+ 2\xi_1^2 \bigr\}
\end{align*}
to have
\begin{align*}
& \max \biggl\{ \Bigl\{ \frac{16}{5} \frac{ |M_1- C(2^{k}+ 2^{k_2})|   }{|\xi_1|} + \xi_1^4 \Bigr\}^{1/2}-\xi_1^2,~ K_1^2 \biggr\} \\
& \hspace{1.2cm} \leq |2\xi-\xi_1|^2 
 \leq  \Bigl\{ \frac{16}{5} \frac{ M_1+ C(2^{k}+ 2^{k_2}) }{|\xi_1|} + \xi_1^4 
\Bigr\}^{1/2}-\xi_1^2,
\end{align*}
where $M_1:=|\tau_1-\xi_1^5/16|$. This estimate shows (\ref{es_{m_1}}) by following the proof of Lemma~\ref{lem_dy_1}.
\end{proof}

\begin{lem} \label{lem_dy_3}
Assume that $f$ is supported on a single $A_{j}$ for $j \geq 0$ and $g$ is an arbitrary test function. Then 
\begin{align} \label{es_dy_3-1}
\| f* g \|_{L_{\tau,\xi}^2( B_k)}
\lesssim 2^{3k/8}~  
\| f \|_{\hat{X}_{(2,1)}^{0,1/2}} \| |\xi|^{-3/4}~ g \|_{L_{\tau,\xi}^2}.
\end{align}
Moreover if a non-empty set $\Omega \subset \mathbb{R}^2$ satisfies 
\begin{align*}
K_2:=\inf \{|\xi+\xi_1|~;~\exists \tau,\tau_1 \text{ s.t. } (\tau,\xi) \in \Omega,~(\tau_1,\xi_1) \in \text{\upshape supp}~f  \}>0,
\end{align*}
then we have
\begin{align} \label{es_dy_3-2}
\| f* g \|_{L_{\tau,\xi}^2(\Omega \cap B_k)}
\lesssim  2^{k/2}~K_2^{-3/2}~  
\| f \|_{\hat{X}_{(2,1)}^{0,1/2} } \| |\xi|^{-1/2}~g  \|_{L_{\tau,\xi}^2} .
\end{align}
\end{lem}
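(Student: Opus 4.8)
\textbf{Proof proposal for Lemma~\ref{lem_dy_3}.}

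The plan is to exploit the obvious symmetry between the roles of $f$ and $g$ in the bilinear convolution estimate: Lemma~\ref{lem_dy_3} is nothing but Lemma~\ref{lem_dy_2} with the functions $f$ and $g$ (equivalently, the frequency variables $\xi_1$ and $\xi-\xi_1$) interchanged. Concretely, if I write out the trilinear form
\begin{align*}
\Bigl| \int_{\R^2}\int_{\R^2} f(\tau_1,\xi_1)\, g(\tau-\tau_1,\xi-\xi_1)\, h(\tau,\xi)\, d\tau_1 d\xi_1 d\tau d\xi \Bigr|,
\end{align*}
the change of variables $(\tau_1,\xi_1)\mapsto(\tau-\tau_1,\xi-\xi_1)$ carries it into the trilinear form governing Lemma~\ref{lem_dy_2}, with the hypothesis ``$g$ supported on a single $A_j$'' turning into ``$f$ supported on a single $A_j$'', and with $K_1=\inf|\xi+\xi_2|$ turning into $K_2=\inf|\xi+\xi_1|$. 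So the first step is to reduce, exactly as in the proofs of Lemmas~\ref{lem_dy_1} and \ref{lem_dy_2}, the estimates \eqref{es_dy_3-1} and \eqref{es_dy_3-2} to their dualized trilinear-form versions after dyadically decomposing $f$ in the modulation variable into pieces supported on $B_{k_1}$, and summing the resulting $2^{k_1/2}$-weighted bounds against the $\ell^1_{k_1}$ norm implicit in $\|f\|_{\hat X_{(2,1)}^{0,1/2}}$.

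The second step is to run Schwarz's inequality twice and Fubini, reducing matters to the measure bound for
\begin{align*}
m_2(\tau_2,\xi_2) := \int_{\R^2} \chi_{\Lambda_3}(\tau,\xi,\tau_2,\xi_2)\, d\tau\, d\xi,
\end{align*}
where $\Lambda_3 := \{ (\tau,\xi,\tau_2,\xi_2) : (\tau-\tau_2,\xi-\xi_2)\in\supp f,\ (\tau,\xi)\in\supp h \}$, namely
\begin{align*}
m_2(\tau_2,\xi_2) \lesssim \min\bigl\{ K_2^{-3}|\xi_2|^{-1} 2^{k+k_1},\ |\xi_2|^{-3/2} 2^{3k/4} 2^{k_1} \bigr\}.
\end{align*}
Here one fixes $(\tau_2,\xi_2)$ and uses the algebraic identity obtained from the one in the proof of Lemma~\ref{lem_dy_2} by swapping the indices $1$ and $2$, i.e.
\begin{align*}
\Bigl(\tau_2-\frac{\xi_2^5}{16}\Bigr) - (\tau-\xi^5) + \bigl\{ (\tau-\tau_2) - (\xi-\xi_2)^5 \bigr\} = \frac{5}{16}\,\xi_2\, (2\xi-\xi_2)^2 \bigl\{ (2\xi-\xi_2)^2 + 2\xi_2^2 \bigr\},
\end{align*}
which confines $|2\xi-\xi_2|$ to an interval whose length is controlled by $|\xi_2|^{-1/2}2^{3k/4}$ in general and by $K_2^{-3}|\xi_2|^{-1}$ on $\Omega$, while the $\tau$-variation is $O(\min\{2^k,2^{k_1}\})$ once $\xi$ is fixed. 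Multiplying these gives the claimed bound on $m_2$, and hence \eqref{es_dy_3-1}, \eqref{es_dy_3-2}.

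Since every analytic ingredient — the reduction, the double Schwarz argument, the counting estimate, and the dyadic summation — is already carried out verbatim in Lemmas~\ref{lem_dy_1} and \ref{lem_dy_2}, I expect no genuine obstacle here; the only thing to be careful about is bookkeeping, namely checking that under the substitution $(\tau_1,\xi_1)\leftrightarrow(\tau_2,\xi_2)$ the quantity $K_2$ really plays the role that $K_1$ played (both measure the separation of the \emph{output} frequency $\xi$ from the negative of the \emph{$A_j$-localized} input frequency), and that the asymmetry between $\|\,|\xi|^{-3/4}g\|_{L^2}$ / $\|\,|\xi|^{-1/2}g\|_{L^2}$ on the right and the weight on $h$ is routed correctly. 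In fact the cleanest write-up is simply to note that \eqref{es_dy_3-1}–\eqref{es_dy_3-2} follow from \eqref{es_dy_2-1}–\eqref{es_dy_2-2} by the reflection $(\tau,\xi)\mapsto(-\tau,-\xi)$ applied to all three functions, which exchanges the hypotheses on $f$ and $g$ while preserving all modulation weights $\LR{\tau-\xi^5}$ up to the harmless sign, so that the lemma requires essentially no new proof at all.
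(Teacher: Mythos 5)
Your proposal is correct and takes the same route as the paper, which disposes of Lemma~\ref{lem_dy_3} in a single line (``in the same manner as the proof of Lemma~\ref{lem_dy_2} \dots by symmetry''); your index-swapped identity, the measure bound for $m_2$ with $K_2$ in the role of $K_1$, and the dyadic summation in $k_1$ are exactly that symmetry written out. One small correction: the exchange of the hypotheses on $f$ and $g$ comes from commutativity of the convolution, i.e.\ the change of variables $(\tau_1,\xi_1)\mapsto(\tau-\tau_1,\xi-\xi_1)$ you already invoke, not from the reflection $(\tau,\xi)\mapsto(-\tau,-\xi)$, which by itself leaves the roles of $f$ and $g$ unchanged (it is harmless but unnecessary).
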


In the same manner as the proof of Lemma~\ref{lem_dy_2}, we immediately obtain
(\ref{es_dy_3-1}) and (\ref{es_dy_3-2}) by symmetry. 
We put a smooth cut-off function $\varphi (t)$ satisfying 
$\varphi (t)= 1~\text{for}~|t|<1
~\text{and}~   =0~\text{for}~  |t|>2 $ and define $\| \cdot \|_{Z^{s,a}}$ as 
$\| u \|_{Z^{s,a}}:=\| \widehat{u} \|_{\hat{Z}^{s,a}}$. 
We mention the linear estimates below. 
\begin{prop} \label{prop_linear1}
Let $s,a \in \mathbb{R}$ and $u(t)=\varphi (t) U(t) u_0$. Then the following estimate holds. 
\begin{align*}
\| u\|_{Z^{s, a}} +\| u \|_{L_t^{\infty}( \mathbb{R};H_{x}^{s,a})} \lesssim
\| u_0 \|_{H^{s,a}}.
\end{align*}
\end{prop}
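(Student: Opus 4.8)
The plan is to reduce the estimate to the explicit space-time Fourier transform of the truncated free evolution and then exploit the rapid decay of $\widehat{\varphi}$. Up to a harmless constant one has
\begin{align*}
\mathcal{F}_{t,x}\bigl[\varphi(t)U(t)u_0\bigr](\tau,\xi)=\widehat{\varphi}(\tau-\xi^5)\,\widehat{u_0}(\xi),
\end{align*}
since $U(t)u_0$ has spatial Fourier transform $e^{it\xi^5}\widehat{u_0}(\xi)$ and the remaining integral in $t$ yields $\widehat{\varphi}(\tau-\xi^5)$. Thus $\widehat{u}$ factorises into a $\xi$-part carrying $\widehat{u_0}$ and a modulation-part $\mu=\tau-\xi^5$ carrying $\widehat{\varphi}\in\mathcal{S}(\mathbb{R})$. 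The bound on $\|u\|_{L_t^\infty H_x^{s,a}}$ is then immediate: the multiplier $e^{it\xi^5}$ is unimodular, so $U(t)$ is an isometry of $H^{s,a}$ and $\|u\|_{L_t^\infty H_x^{s,a}}\leq\|\varphi\|_{L_t^\infty}\|u_0\|_{H^{s,a}}$.

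For $\|u\|_{Z^{s,a}}=\|\widehat{u}\|_{\hat{Z}^{s,a}}$ I would estimate $p_h\widehat{u}$ and $p_l\widehat{u}$ separately, using that the weight $\langle\xi\rangle^{s-a}|\xi|^a$ appearing in $\hat{Z}^{s,a}$ collapses in each frequency regime to a piece of the $H^{s,a}$ weight: $\langle\xi\rangle^{s-a}|\xi|^a\sim\langle\xi\rangle^{s}$ on $\{|\xi|\geq1\}$ (since there $\langle\xi\rangle\sim|\xi|$) and $\langle\xi\rangle^{s-a}|\xi|^a\sim|\xi|^{a}$ on $\{|\xi|\leq1\}$ (since there $\langle\xi\rangle\sim1$). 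For the high-frequency term, after the change of variables $\mu=\tau-\xi^5$ the dependence on $j$ and $k$ separates,
\begin{align*}
\|\langle\xi\rangle^{s}\langle\tau-\xi^5\rangle^{1/2}p_h\widehat{u}\|_{L_{\tau,\xi}^2(A_j\cap B_k)}=a_j\,b_k,
\end{align*}
where $a_j:=\|\langle\xi\rangle^{s}\widehat{u_0}\|_{L^2(A_j\cap\{|\xi|\geq1\})}$ and $b_k:=\|\langle\tau-\xi^5\rangle^{1/2}\widehat{\varphi}(\tau-\xi^5)\|_{L_\tau^2(B_k)}$ is independent of $\xi$. Since $\widehat{\varphi}$ is Schwartz, $b_k\lesssim_N 2^{-Nk}$ for every $N$, so $\sum_k b_k<\infty$, and therefore
\begin{align*}
\|p_h\widehat{u}\|_{\hat{X}_{(2,1)}^{s,1/2}}=\bigl\|\{a_jb_k\}_{j,k}\bigr\|_{\ell_j^2\ell_k^1}=\Bigl(\sum_k b_k\Bigr)\Bigl(\sum_j a_j^2\Bigr)^{1/2}\lesssim\|\langle\xi\rangle^{s}\widehat{u_0}\|_{L^2(|\xi|\geq1)}\lesssim\|u_0\|_{H^{s,a}}.
\end{align*}

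For $\|p_l\widehat{u}\|_{\hat{X}_L^a}$ I would run through the four cases of the definition of $\hat{X}_L^a$; in each, since $p_l\widehat{u}$ is supported in $\{|\xi|\leq1\}\subset A_0$, the relevant norm has the form $\||\xi|^{a}\langle\tau-\xi^5\rangle^{b}f\|_{L^2}$ or $\sum_k 2^{bk}\||\xi|^{a}f\|_{L^2(\,\cdot\,\cap B_k)}$, possibly localised further to $D_1$ or $D_2$. Removing that extra localisation (which only enlarges the domain) and separating variables bounds such a quantity by
\begin{align*}
\Bigl(\int_{\mathbb{R}}\langle\mu\rangle^{2b}|\widehat{\varphi}(\mu)|^2\,d\mu\Bigr)^{1/2}\Bigl(\int_{|\xi|\leq1}|\xi|^{2a}|\widehat{u_0}(\xi)|^2\,d\xi\Bigr)^{1/2}
\end{align*}
(respectively with $\sum_k 2^{bk}b_k$ in place of the first factor), where the first factor is finite by the Schwartz decay of $\widehat{\varphi}$ and the second is comparable to the low-frequency part of $\|u_0\|_{H^{s,a}}$. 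Summing the high- and low-frequency contributions gives $\|u\|_{Z^{s,a}}\lesssim\|u_0\|_{H^{s,a}}$. There is no genuine obstacle here: once the Fourier-transform identity is in place the rest is routine, the only points requiring attention being the bookkeeping through the four cases of $\hat{X}_L^a$ and the verification that $\langle\xi\rangle^{s-a}|\xi|^a$ matches the weights used in $\hat{Z}^{s,a}$ on $\{|\xi|\geq1\}$ and on $\{|\xi|\leq1\}$; all convergence of the $\ell_k^1$ sums and of the modulation integrals against the $\langle\tau-\xi^5\rangle^{b}$ weights is automatic from $\widehat{\varphi}\in\mathcal{S}(\mathbb{R})$.
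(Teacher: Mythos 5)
Your proof is correct: the factorization $\mathcal{F}_{t,x}[\varphi(t)U(t)u_0]=\widehat{\varphi}(\tau-\xi^5)\widehat{u_0}(\xi)$, the separation of variables in each dyadic block, and the rapid decay of $\widehat{\varphi}$ to sum the modulation pieces is exactly the standard argument that the paper invokes by citing \cite{GTV} (the paper gives no proof of its own), correctly adapted here to the $\ell_j^2\ell_k^1$ structure of $\hat{X}_{(2,1)}^{s,1/2}$ and to the four cases of $\hat{X}_L^{a}$. No gaps.
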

\begin{prop}\label{prop_linear2}
Let $s,a \in \mathbb{R}$ and 
\begin{align*}
u(t)=\varphi (t) \int_{0}^{t} U(t-s) F(s) ds .
\end{align*}
Then the following estimate holds.
\begin{align*}
\| u \|_{Z^{s, a}}+ \| u \|_{L_t^{\infty} (\mathbb{R};H_x^{s,a})}
\lesssim \|\mathcal{F}_{\tau,\xi}^{-1} \langle \tau-\xi^5 \rangle^{-1} 
\widehat{F}  \|_{Z^{s,a}}+ 
\| \langle \xi \rangle^{s-a} ~|\xi|^{a}~  \langle \tau-\xi^5 \rangle^{-1} 
\widehat{F}  \|_{L_{\xi}^2 L_{\tau}^1}.
\end{align*} 
\end{prop}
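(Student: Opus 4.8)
The plan is to prove this proposition as the standard inhomogeneous (Duhamel) linear estimate of the Fourier restriction norm method, carried out for the modified space $\hat Z^{s,a}$ rather than a plain $\hat X^{s,a,b}$; the template is Kishimoto~\cite{Ki09}. I would first dispose of the $L_t^\infty H_x^{s,a}$ part, which does not see the $Z^{s,a}$ structure at all: since $U(t)$ is unitary on $H^{s,a}$ and $\varphi$ is supported in $|t|\le 2$, writing
\begin{align*}
\mathcal{F}_x\Big[\int_0^t U(-s)F(s)\,ds\Big](\xi)=\int \frac{e^{it\sigma}-1}{i\sigma}\,\widehat F(\xi^5+\sigma,\xi)\,d\sigma
\end{align*}
and using the elementary bound $|e^{it\sigma}-1|/|\sigma|\lesssim\langle\sigma\rangle^{-1}$ for $|t|\le 2$, one obtains $\|u(t)\|_{H_x^{s,a}}\lesssim\|\langle\xi\rangle^{s-a}|\xi|^a\langle\tau-\xi^5\rangle^{-1}\widehat F\|_{L_\xi^2 L_\tau^1}$ uniformly in $t$.

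For the $Z^{s,a}$ part I would start from the space-time Fourier identity
\begin{align*}
\mathcal{F}_{t,x}\Big[\varphi(t)\!\int_0^t U(t-s)F(s)\,ds\Big](\tau,\xi)=\int \frac{\widehat\varphi(\mu-\sigma)-\widehat\varphi(\mu)}{i\sigma}\,\widehat F(\xi^5+\sigma,\xi)\,d\sigma,\qquad \mu:=\tau-\xi^5,
\end{align*}
and split the $\sigma$-integral at $|\sigma|=1$. On $\{|\sigma|\le 1\}$ the difference quotient is pointwise bounded by a rapidly decaying function $\Psi(\mu)$ of the output modulation, so this contribution is at most $\Psi(\tau-\xi^5)\int_{|\lambda-\xi^5|\le1}|\widehat F(\lambda,\xi)|\,d\lambda$; since $\Psi(\tau-\xi^5)$ has an $O(1)$ norm in each $\tau$-weighted component of $\hat Z^{s,a}$ (the $\ell_k^1$ sum of $2^{k/2}$ times the rapidly decaying mass of $\Psi$ in $B_k$ converges, and likewise for the low-frequency norms in $\hat X_L^a$ because every $b$ appearing there is $<1$, so the $\langle\tau-\xi^5\rangle^{-1}$ gain dominates), this term is controlled by the $L_\xi^2 L_\tau^1$ norm on the right-hand side. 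On $\{|\sigma|\ge 1\}$ I would write the kernel as $\widehat\varphi(\mu-\sigma)/(i\sigma)$ minus $\widehat\varphi(\mu)/(i\sigma)$. The first piece equals $\varphi(t)$ times $\mathcal{F}_{\tau,\xi}^{-1}$ of a bounded multiple of $\chi_{\{|\tau-\xi^5|\ge1\}}\langle\tau-\xi^5\rangle^{-1}\widehat F$; since that multiplier depends only on $\tau-\xi^5$ it does not couple distinct dyadic blocks, so, using that multiplication by $\varphi(t)$ is bounded on $Z^{s,a}$, this piece is $\lesssim\|\mathcal{F}_{\tau,\xi}^{-1}\langle\tau-\xi^5\rangle^{-1}\widehat F\|_{Z^{s,a}}$. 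The second piece equals $-\varphi(t)U(t)\psi$ with $\widehat\psi(\xi)=\int_{|\lambda-\xi^5|\ge1}\widehat F(\lambda,\xi)/(i(\lambda-\xi^5))\,d\lambda$, so Proposition~\ref{prop_linear1} together with $|\widehat\psi(\xi)|\lesssim\|\langle\tau-\xi^5\rangle^{-1}\widehat F(\cdot,\xi)\|_{L_\tau^1}$ bounds it by the $L_\xi^2 L_\tau^1$ norm on the right-hand side. Adding the three contributions finishes the proof.

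The step I expect to cost the most is the auxiliary fact that multiplication by the time cutoff $\varphi(t)$ is bounded on $Z^{s,a}$: on the Fourier side this is convolution in $\tau$ against the rapidly decaying $\widehat\varphi$, and one must check that it respects the $\ell_j^2\ell_k^1$ structure of $\hat X_{(2,1)}^{s,1/2}$ on $\{|\xi|\ge1\}$ — using $\langle\tau-\xi^5\rangle^{1/2}\lesssim\langle\tau'-\xi^5\rangle^{1/2}\langle\tau-\tau'\rangle^{1/2}$ and a Young/Schur estimate in the dyadic modulation index, the polynomial loss $2^{|k-k'|/2}$ being absorbed by the rapid decay of $\widehat\varphi$ — as well as each of the weighted low-frequency norms entering $\hat X_L^a$ on $\{|\xi|\le1\}$. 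A little care is also needed at the modulation $\sim 1$ interface, where the single overlapping dyadic block should be folded into the $\{|\sigma|\le1\}$ analysis. All of this is routine once the decomposition above is set up, and it follows the scheme of~\cite{Ki09}.
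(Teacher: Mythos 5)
Your argument is correct and is essentially the standard Ginibre--Tsutsumi--Velo/Kishimoto proof of the inhomogeneous linear estimate (splitting $(e^{it\sigma}-1)/\sigma$ at $|\sigma|\sim 1$ into three pieces, plus boundedness of multiplication by $\varphi(t)$ on the modified space), which is exactly what the paper invokes by citing \cite{GTV} instead of writing out a proof. So you have supplied the same approach, with the extra (and correctly identified) verifications needed for the $\ell_j^2\ell_k^1$ and low-frequency components of $\hat{Z}^{s,a}$.
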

The proofs of these propositions are given 
in \cite{GTV}.  

\section{Proof of the bilinear estimates}

\noindent
In this section, we prove the bilinear estimate (\ref{BE-2}). We use the following notation for simplicity, 
\begin{align*}
A_{<j_1} := \bigcup_{j < j_1} A_j, \hspace{0.3cm} 
B_{[k_1,k_2)}:= \bigcup_{k_1 \leq k <k_2} B_k,
\hspace{0.3cm} \text{etc}.
\end{align*}
Here we state the key bilinear estimates as follows. 
\begin{prop} \label{prop_BE-2}
Let $s,a$ satisfy (\ref{co_op}). Suppose that $f$ and $g$ are restricted on 
$A_{j_1}$ and $A_{j_2}$ for $j_1,j_2 \in \mathbb{N} \cup \{ 0 \}$. Then we obtain, for $j \geq 0$,
\begin{align} \label{BE-X}
& \| \langle \tau-\xi^5 \rangle^{-1}~\xi~(\xi^2 f)*g \|_{\hat{Z}^{s,a}(A_j)}
 \lesssim C(j,j_1,j_2) \| f \|_{\hat{Z}^{s,a}} \| g \|_{\hat{Z}^{s,a}}, \\
\label{BE-Y}
& \bigl\|~\langle \xi \rangle^{s-a}~
|\xi|^{a+1}~ \langle \tau-\xi^5 \rangle^{-1}~(\xi^2 f)*g 
 \bigr\|_{L_{\xi}^2 L_{\tau}^1 (A_j)} 
\lesssim C(j,j_1,j_2) \| f \|_{\hat{Z}^{s,a}} \| g\|_{\hat{Z}^{s,a}},
\end{align}
in the following eight cases. 

(i) At least two of $j,j_1,j_2$ are less than $30$ and $C(j,j_1,j_2) \sim 1$.

(ii) $j_1,j_2 \geq 30$, $|j_1-j_2| \leq 1$, $0<j <j_1-9$ and $C(j,j_1,j_2) \sim 2^{-\delta j}$ for some $\delta>0$.

(iii) $j,j_2 \geq 30$, $|j-j_2| \leq 10$, $0< j_1 < j-10$ and $C(j,j_1,j_2) \sim2^{-\delta j_1}+2^{-\delta (j-j_1) }$ for some $\delta>0$.

(iv) $j,j_1 \geq 30$, $ |j-j_1| \leq 10$, $0< j_2 < j-10$ and $C(j,j_1,j_2) \sim 2^{-\delta j_2}+2^{-\delta (j-j_2)}$ for some $\delta>0$.

(v) $j,j_1,j_2 \geq 30$, $|j-j_1| \leq 10$, $|j-j_2| \leq 10$ and 
$C(j,j_1,j_2) \sim 1$.

(vi) $j_1,j_2 \geq 30$, $j=0$ and $C(j,j_1j_2) \sim 1$.

(vii) $j,j_2 \geq 30$, $j_1=0$ and $C(j,j_1,j_2) \sim 1$.

(viii) $j, j_1 \geq 30$, $j_2=0$ and $C(j,j_1,j_2) \sim 1$.
\end{prop}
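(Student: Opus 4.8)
\textbf{Proof plan for Proposition~\ref{prop_BE-2}.}
The plan is to reduce both \eqref{BE-X} and \eqref{BE-Y} to dyadic estimates in the modulation variables $\langle\tau-\xi^5\rangle$, i.e. restrict $f$, $g$ and the output to single blocks $A_{j_1}\cap B_{k_1}$, $A_{j_2}\cap B_{k_2}$ and $A_j\cap B_k$ respectively, prove a gain of the form $C(j,j_1,j_2)\,2^{-\epsilon(k+k_1+k_2)}$ (or a suitable variant near the thresholds $b=1/2$, $b=5a/3+9/10$ where no modulation gain is available), and then sum in $k,k_1,k_2$ using the $\ell^2_j\ell^1_k$ structure of $\hat X^{s,1/2}_{(2,1)}$ on the high-frequency side and the $\hat X^{a,b}_L$ / $\hat X^{a,b,1}_L$ norms on the low-frequency side. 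Throughout I would use the resonance identity
\begin{align*}
(\tau-\xi^5)-(\tau_1-\xi_1^5)-(\tau_2-\xi_2^5)=5\xi\xi_1\xi_2(\xi^2-\xi\xi_1+\xi_1^2+\xi\xi_2+\xi_1\xi_2+\xi_2^2)\big/\text{(const)}
\end{align*}
(equivalently the factorization used in Lemma~\ref{lem_dy_1}), so that $\max\{\langle\tau-\xi^5\rangle,\langle\tau_1-\xi_1^5\rangle,\langle\tau_2-\xi_2^5\rangle\}\gtrsim |\xi\xi_1\xi_2|\,(\xi^2+\xi_1^2+\xi_2^2)^2$ up to constants; this is the source of the recovered derivatives. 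The weight bookkeeping $\langle\xi\rangle^{s-a}|\xi|^{a+1}$ versus $\langle\xi_i\rangle^{s-a}|\xi_i|^a$ must be tracked case by case, and this is where the hypothesis \eqref{co_op} enters.

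Next I would split into the eight configurations listed. Cases (ii)--(v) are the genuinely high-frequency interactions $\{|\xi|,|\xi_1|,|\xi_2|\gtrsim1\}$: here everything is controlled by the smoothing estimates of Lemma~\ref{lem_dy_1}--\ref{lem_dy_3}. In the ``high$\times$high$\to$low'' case (ii) one has $|\xi_1|\sim|\xi_2|\gg|\xi|$, so $|\xi_1-\xi_2|$ can be small and one needs the separation-improved \eqref{es_dy_1-2} with $K=|\xi_1-\xi_2|$, combined with the fact that the resonance function is large ($\gtrsim 2^{4j_1}$) to convert a power of the largest modulation into the decay $2^{-\delta j}$. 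In the ``high$\times$low$\to$high'' cases (iii),(iv), where one input frequency $2^{j_1}$ (resp. $2^{j_2}$) is $\lesssim 2^{j-10}$, the two inputs are frequency-separated from the output by $\gtrsim 2^j$, so Lemma~\ref{lem_dy_2} or \ref{lem_dy_3} with $K_1$ (resp. $K_2$) $\sim 2^j$ applies and, after distributing the weights $\langle\xi\rangle^{s-a}|\xi|^{a}$, yields the asserted gain $2^{-\delta j_1}+2^{-\delta(j-j_1)}$ once $s\geq-1/4$; the endpoint loss is exactly what forces the exclusion $(s,a)\neq(-1/4,-7/8)$ and dovetails with the low-frequency modification. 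Case (v), all three frequencies comparable, is the ``diagonal'' term: no frequency separation, so one uses only \eqref{es_dy_1-1}, the weights all cancel, and $C\sim1$ suffices.

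Cases (i) and (vi)--(viii) are the interactions touching the low-frequency block $A_0$: (vi) is high$\times$high$\to$low with output in $|\xi|\le1$, and (vii),(viii) have one input in $|\xi|\le1$. For these I would insert the low-frequency norm $\|p_l f\|_{\hat X^a_L}$ — using the $D_0$/$D_1$/$D_2$ decomposition and the $b=5a/3+9/10$ (resp.\ $b=3/8+\epsilon/2$) weight dictated by Examples~1--2 — and balance the negative power $|\xi|^a$ (which is large for small $\xi$, $a<0$) against the derivative $|\xi|^{a+1}$ gained from $\p_x$ and against the modulation weights. The main obstacle, and the heart of the proof, is precisely the borderline sub-cases of (iii),(iv),(vi),(vii) near the lines \eqref{co_cr_1}--\eqref{co_cr_2}, where the naive estimate has no room: there one cannot afford any loss, the modulation exponent must be pinned at the critical value, and one must exploit the $\ell^1_k$-summability of the modified norms (on the high side via $\hat X^{s,1/2}_{(2,1)}$, on the low side via $\hat X^{a,b,1}_L$) together with the sharp geometry of the resonant set near $\{\tau=\xi^5/16\}$ and $\{|\tau|\sim|\xi|^{-5/3}\}$. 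Once these borderline estimates are in hand with constants $C(j,j_1,j_2)$ as stated, Proposition~\ref{prop_BE-2} follows, and summing the geometric series in $j,j_1,j_2$ then yields \eqref{BE-2}.
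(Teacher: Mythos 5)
Your outline does follow the same general route as the paper (dyadic decomposition in frequency and modulation, the smoothing estimates of Lemmas~\ref{lem_dy_1}--\ref{lem_dy_3} with the separation parameters $K,K_1,K_2$, and special treatment of the blocks touching $A_0$ through the modified low-frequency norms), but as written it is a plan rather than a proof, and the places it defers are exactly where the proposition has its content. Two quantitative statements in the plan are also wrong. First, the resonance bound: the identity gives $\max\{\langle\tau-\xi^5\rangle,\langle\tau_1-\xi_1^5\rangle,\langle\tau_2-\xi_2^5\rangle\}\gtrsim |\xi\xi_1\xi_2|\,(\xi^2+\xi_1^2+\xi_2^2)$, a quintic weight, not $|\xi\xi_1\xi_2|\,(\xi^2+\xi_1^2+\xi_2^2)^2$; with the squared factor you would be recovering roughly four derivatives, the three-derivative nonlinearity would be harmless, and the whole difficulty this paper addresses would vanish, so the exponent bookkeeping built on that bound cannot be trusted. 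Second, in case (ii) the inputs are nearly opposite ($\xi_1\approx-\xi_2$ since $|\xi|\ll|\xi_1|\sim|\xi_2|$), so $|\xi_1-\xi_2|\sim 2^{j_1}$ is automatically large — it is precisely this, combined with $2^{k_{\max}}\gtrsim 2^{4j_1+j}$, that produces the decay $2^{-\delta j}$; your remark that the separation ``can be small'' misreads how \eqref{es_dy_1-2} is used there.

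The genuine gap is in cases (vi)--(viii), which you only name. The paper must there split $|\xi|\le 2^{-4j_1}$ from $2^{-4j_1}\le|\xi|\le 1$, isolate the resonant regime $2^{k_{\max}}\sim|\xi|2^{4j_1}$ (respectively $|\xi_2|2^{4j}$), and treat it with different exponents for $a=-1/4$, $-7/8<a<-1/4$, $a=-7/8$ and $-3/2<a<-7/8$, using the $D_1/D_2$ decomposition; moreover, in case (vii)/(viii) a naive estimate loses a factor $\sim j$, and the paper removes it by an iteration over modulation windows $B_{[5j/2+\gamma,5j/2+\alpha]}$ with the dyadic sequence $a_n$, interpolating \eqref{es_hlh_a} and \eqref{es_hlh_b} — a specific device that ``exploit the $\ell^1_k$ summability and the sharp geometry of the resonant set'' does not supply. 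You also do not address \eqref{BE-Y} as a separate claim: the paper deduces it from \eqref{BE-X} via \eqref{es-L-1} only for $j\neq 0$ and gives an independent argument for $j=0$ (case (vi)). Finally, the exclusion $(s,a)\neq(-1/4,-7/8)$ does not come from an endpoint loss in (iii)--(iv); it arises in the low-frequency interactions, where the weight $\hat X_L^{-7/8,3/8+\varepsilon_1/2}$ forces $s\geq-1/4+\varepsilon_1$ (see the paper's steps (Vb-2) and (VIc-1)). Until these borderline cases are actually carried out with the stated constants $C(j,j_1,j_2)$, the proposal does not yet prove the proposition.
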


Combining the $L_{\xi}^2$-property of $\hat{Z}^{s,a}$, namely $\| f \|_{\hat{Z}^{s,a}}^2=\sum_{j} \| f \|_{\hat{Z}^{s,a}(A_j)}^2 $, and this proposition, we obtain (\ref{BE-2}). 

\begin{proof}
We put $2^{k_{\max}}:=\max\{2^k, 2^{k_1}, 2^{k_2}  \}$. Then we have 
\begin{align*}
2^{k_{\max}} \gtrsim
\bigl| \xi \xi_1 (\xi-\xi_1) \bigl\{ \xi^2+ \xi_1^2+(\xi-\xi_1)^2 
\bigr\} \bigr|. 
\end{align*}
From the definition, we easily obtain 
\begin{align} \label{imb}
\hat{X}^{s,a,3/4+\varepsilon} \hookrightarrow \hat{Z}^{s,a} 
\hookrightarrow \hat{X}^{s,a,3/8}.
\end{align}
where $\varepsilon>0$ is sufficiently small.

\vspace{0.3em}

(I) Estimate for (i). In this case, we can assume $j,j_1,j_2  \leq 40$. The left hand sides of (\ref{BE-X}) and 
(\ref{BE-Y}) is bounded by $C \| |\xi|^{a+1} 
\langle \tau-\xi^5 \rangle^{-1/4+\varepsilon} f*g  \|_{L_{\tau,\xi}^2}$ from 
(\ref{imb}). 
We use the H\"{o}lder inequality and the Young inequality to obtain
\begin{align*}
 \||\xi|^{a+1} \langle \tau-\xi^5 
\rangle^{-1/4+\varepsilon}  f*g  \|_{L_{\tau,\xi}^2} & \lesssim 
\| f*g \|_{L_{\xi}^{\infty} L_{\tau}^2 } \\
& \lesssim \| f \|_{L_{\xi}^2 L_{\tau}^{4/3}}\| g \|_{L_{\xi}^2 L_{\tau}^{4/3}}
\lesssim \| f   \|_{\hat{X}^{0,a,3/8}} \| g \|_{\hat{X}^{0,a,3/8}},
\end{align*}
which implies the desired estimate from (\ref{imb}). 

From the estimate in the cases (iv) and (viii), we easily obtain 
(\ref{BE-X}) in the cases (iii) and (vii) because 
we recover derivative losses in these cases. Therefore we omit the proof in the cases (iii) and (vii).
We first prove (\ref{BE-X}) in other cases.

\vspace{0.3em}

(II) Estimate for (ii). We prove 
\begin{align} \label{es-HHL}
2^{(s+1) j}~2^{2j_1} \sum_{k \geq 0} 2^{-k/2} \| f*g \|
_{L_{\tau,\xi}^{2}(A_j \cap B_k)}
\lesssim 2^{-\delta j} \| f \|_{\hat{X}_{(2,1)}^{s,1/2}} \| g \|_{\hat{X}_{(2,1)}^{s,1/2}}.
\end{align}

(IIa) We consider (\ref{es-HHL}) in the case $2^{k_{\max}}=2^{k}$. From 
$2^{k} \gtrsim 2^{4j_1+j}$, we use (\ref{es_dy_1-2}) with $K \sim 2^{j_1}$ to have
\begin{align*}
\text{(L.H.S.)} & 
\sim 2^{(s+1)j}~ 2^{ (-2s+2) j_1} \sum_{k \geq 4j_1 +j +O(1)} 2^{-k/2}
\| (\langle \xi \rangle^s f)*( \langle \xi \rangle^s g)  
\|_{L_{\tau,\xi}^2 (B_k)} \\
& \lesssim 2^{(s+1/2) j}~ 2^{-2s j_1} 
\|( \langle \xi \rangle^s f) *(\langle \xi \rangle^s g )  \|_{L_{\tau,\xi}^2} \\&  \lesssim 2^{s j}~ 2^{(-2s-3/2) j_1} 
\| f \|_{\hat{X}_{(2,1)}^{s,1/2}} \| g \|_{\hat{X}_{(2,1)}^{s,1/2}}
\end{align*}
which is bounded by $2^{-5j/4} \| f \|_{\hat{X}_{(2,1)}^{s,1/2}} \| g \|_{\hat{X}_{(2,1)}^{s,1/2}}$ for $s \geq -1/4$.

\vspace{0.3em}

(IIb) We consider (\ref{es-HHL}) in the case $2^{k_{\max}}=2^{k_2}$. From 
$2^{-k_2/2} \lesssim 2^{-k/8} 2^{-3 j_1/2} 2^{-3j/8}$, we use (\ref{es_dy_3-2}) with $K_2 \sim 2^{j_1}$ to obtain
\begin{align*}
\text{(L.H.S.)} & \lesssim 2^{(s+5/8) j}~ 2^{(-2s+1/2) j_1} 
\sum_{k \geq 0} 2^{-5k/8} 
\| (\langle \xi \rangle^s f) * (\langle \xi \rangle^s 
\langle \tau-\xi^5 \rangle^{1/2} g) \|_{L_{\tau,\xi}^2(B_k)} \\
& \lesssim 2^{(s+5/8) j}~ 2^{(-2s-3/2) j_1} \sum_{k \geq 0} 2^{-k/8}
\| f \|_{\hat{X}_{(2,1)}^{s,1/2}} \| g \|_{\hat{X}_{(2,1)}^{s,1/2}},
\end{align*}
which shows the required estimate for $s \geq -1/4$. 

\vspace{0.3em}

In the same manner as above, we obtain the desired estimate in the case $2^{k_{\max}}=2^{k_1}$ by symmetry. 

\vspace{0.5em}

(III) Estimate for (iii). We prove 
\begin{align} \label{es-HLH}
2^{(s+3)j} \sum_{k \geq 0} 2^{-k/2} \| f*g \|_{L_{\tau,\xi}^2(B_k)}
\lesssim \bigl(2^{-\delta j_1}+2^{-\delta(j-j_1)} \bigr)
\|  f \|_{\hat{X}_{(2,1)}^{s,1/2}} \|  g \|_{\hat{X}_{(2,1)}^{s,1/2}}.
\end{align}

(IIIa) We consider (\ref{es-HLH}) in the case $2^{k_{\max}}=2^{k}$. 
Since $2^{k} \gtrsim 2^{4j+j_2}$, we use (\ref{es_dy_1-2}) 
with $K \sim 2^{j}$ to have 
\begin{align*}
\text{(L.H.S.)} \sim & 2^{-sj_2} ~2^{3j} \sum_{k \geq 4j+j_2+O(1)} 
2^{-k/2} \| (\langle \xi \rangle^s f)*( \langle \xi \rangle^s g) 
\|_{L_{\tau,\xi}^2(B_k )} \\
\lesssim & 2^{(-s- 1/2) j_2}~ 2^{j} \| (\langle \xi \rangle^s f)*
(\langle \xi \rangle^s g) \|_{L_{\tau,\xi}^2 } \\
\lesssim & 2^{(-s-1/2) j_2}~ 2^{-j} \| f \|_{\hat{X}_{(2,1)}^{s,1/2}}
\| g  \|_{\hat{X}_{(2,1)}^{s,1/2}}.
\end{align*}

(IIIb) We consider (\ref{es-HLH}) in the case $2^{k_{\max}}=2^{k_1}$. 
From $2^{k_1} \gtrsim 2^{4j+j_2}$, we have $2^{-k_1/2} \lesssim 
2^{-k/8} 2^{-3j/2} 2^{-3 j_2/8} $. Then we use (\ref{es_dy_2-2}) with 
$K_1 \sim 2^{j}$ to have 
\begin{align*}
\text{(L.H.S.)} \lesssim & 2^{(-s-3/8) j_2}~ 2^{3j /2} \sum_{k \geq 0} 
2^{-5k/8} \| (\langle \xi \rangle^s \langle \tau-\xi^5 \rangle^{1/2}
 f)* (\langle \xi \rangle^s g)  
\|_{L_{\tau,\xi}^2(B_k)} \\
\lesssim & 2^{(-s-3/8) j_2}~ 2^{-j/2} \sum_{k \geq 0} 2^{-k/8} 
\| f \|_{\hat{X}_{(2,1)}^{s,1/2}} \|  g \|_{\hat{X}_{(2,1)}^{s,1/2}},
\end{align*}
which implies the desired estimate for $s \geq -1/4$.

\vspace{0.3em}

(IIIc) We consider (\ref{es-HLH}) in the case $2^{k_{\max}}=2^{k_2}$. 
Since $2^{-k_2/2} \lesssim 2^{-k/8} 2^{-3 j /2} 2^{-3j_2 / 8}  $, we use 
(\ref{es_dy_3-2}) with $K_2 \sim 2^{j}$ to have 
\begin{align*}
\text{(L.H.S.)} \lesssim & 2^{(-s-3/8) j_2} 2^{3j/2} \sum_{k \geq 0}
2^{-5k/8} \|  (\langle \xi \rangle^s f) * (\langle \xi \rangle^s g ) 
\|_{L_{\tau,\xi}^2(B_k)} \\
\lesssim & 2^{(-s-7/8) j_2} \sum_{k \geq 0} 2^{-k/8} 
\| f \|_{\hat{X}_{(2,1)}^{s,1/2}}  \| g \|_{\hat{X}_{(2,1)}^{s,1/2}}, 
\end{align*}
which shows the required estimate. 

(VI) Estimate for (v). We prove 
\begin{align} \label{es_HHH}
2^{(s+3)j} \sum_{k \geq 0} 2^{-k/2} \| f*g \|_{L_{\tau,\xi}^2 (B_k)} \lesssim
\| f \|_{\hat{X}_{(2,1)}^{s,1/2}} \| g \|_{\hat{X}_{(2,1)}^{s,1/2}}.
\end{align}

(VIa) We consider (\ref{es_HHH}) in the case $2^{k_{\max}}=2^{k}$. 
Since $2^{k} \gtrsim 2^{5j}$, we have 
\begin{align*}
\text{(L.H.S.)} \sim &2^{(-s+3) j} \sum_{k \geq 5j+O(1)} 2^{-k/2} \| (\langle 
\xi \rangle^s f) *(\langle \xi \rangle^s g) \|_{L_{\tau,\xi}^2 (B_k)} \\
\lesssim & 2^{(-s+1/2)j } \|( \langle \xi \rangle^s f
)* (\langle \xi \rangle^s g)  \|_{L_{\tau,\xi}^2 },
\end{align*}
which shows the desired estimate by using (\ref{es_dy_1-1}).

\vspace{0.3em}

(VIb) We consider (\ref{es_HHH}) in the case $2^{k_{\max}}=2^{k_1}$. 
Since $2^{k_1} \gtrsim 2^{5j_1}$, we use (\ref{es_dy_2-1}) with $K_1 \sim 2^{j_1}$ to have 
\begin{align*}
\text{(L.H.S.)} \sim & 2^{(-s+3)j_1} \sum_{k \geq 0} 2^{-k/2} 
\| (\langle \xi \rangle^s f)* (\langle \xi \rangle^s g) \|
_{L_{\tau,\xi}(B_k)} \\
\lesssim & 2^{(-s+1/2)j_1} \sum_{k \geq 0} 2^{-k/2} 
\| (\langle \xi \rangle^s \langle \tau-\xi^5 \rangle^{1/2} f) * (\langle \xi \rangle^s g) 
\|_{L_{\tau,\xi}^2} \\
\lesssim & 2^{(-s-1/4)j_1} \sum_{k \geq 0} 2^{-k/8}
\| f \|_{\hat{X}_{(2,1)}^{s,1/2}} \| g \|_{\hat{X}_{(2,1)}^{s,1/2}}.
\end{align*}

In the same manner as above, we obtain the desired estimate in the case 
$2^{k_{\max}}=2^{k_2}$ by symmetry.

\vspace{0.5em}

(V) Estimate of (v). We prove 
\begin{align} \label{es_hhl-1}
2^{2j_1} \| \langle \tau-\xi^5 \rangle^{-1} \xi f*g   \|_{\hat{X}_L^a}
\lesssim \| f \|_{\hat{X}_{(2,1)}^{s,1/2}} \|  g \|_{\hat{X}_{(2,1)}^{s,1/2}}.
\end{align}
We remark that 
\begin{align} \label{imb_a}
\| f \|_{\hat{X}_L^{a,3/8}} \leq \| f \|_{\hat{X}_L^a} 
\leq \| f \|_{\hat{X}_L^{a,3/4,1}}.
\end{align}
In the case $|\xi| \leq 2^{-4j_1}$, from (\ref{imb_a}), it suffices show to 
\begin{align*}
2^{2j_1} \| |\xi|^{a+1} \langle \tau \rangle^{-1/4+\varepsilon} 
f* g \|_{L_{\tau,\xi}^2} \lesssim \| f \|_{\hat{X}_{(2,1)}^{s,1/2}}
\|g \|_{\hat{X}_{(2,1)}^{s,1/2}}.
\end{align*}
We use the H\"{o}lder inequality and Young inequality to have 
\begin{align*}
\text{(L.H.S.)} \lesssim & 2^{(-2s+2)j_1} \| |\xi|^{a+1} 
\langle \tau \rangle^{-1/4+\varepsilon} ( \langle \xi \rangle^s f )* 
(\langle \xi \rangle^s g) \|_{L_{\tau,\xi}^2} \\
\lesssim & 2^{(-2s+2)j_1} \| |\xi|^{a+1} \|_
{L_{\xi}^2 (|\xi| \leq 2^{-4 j_1})} \| (\langle \xi \rangle^s f) *
(\langle \xi \rangle^s g) \|_{L_{\xi}^{\infty} L_{\tau}^2 } \\
\lesssim & 2^{-2(s+2a+2) j_1 } 
\| \langle \xi \rangle^s f\|_{L_{\xi}^2 L_{\tau}^{4/3}}
\| \langle \xi \rangle^s g \|_{L_{\xi}^2 L_{\tau}^{4/3}}, 
\end{align*}
which implies the required estimate since $\| f \|_{L_{\xi}^2 L_{\tau}^p } \lesssim \| f \|_{\hat{X}_{(2,1)}^{0,1/2}} $ when $1 \leq p \leq 2$. 
Therefore we only consider the case $2^{-4j_1} \leq |\xi| \leq 1$. 

\vspace{0.3em}

(Va) We consider (\ref{es_hhl-1}) in the case $2^{k_{\max}}=2^{k_2}$. 
Note that the left hand side of (\ref{es_hhl-1}) is bounded by 
\begin{align} \label{Va}
 2^{(-2s+2) j_1} \sum_{k \geq 0} 2^{-k/4} 
\| |\xi|^{a+1} (\langle \xi \rangle^s f) *( \langle \xi \rangle^s g) 
\|_{L_{\tau,\xi}^2 (B_k)}
\end{align}
Since $-s/2 \leq a+1$ and $2^{k_2} \gtrsim |\xi| 2^{4j_1}$, we have 
\begin{align*}
2^{-2sj_1} |\xi|^{a+1} \lesssim (|\xi| 2^{4 j_1})^{-s/2} \lesssim 2^{k_{2}/8}
\lesssim 2^{k_2/2} 2^{-3k/8}. 
\end{align*}
Then we use (\ref{es_dy_3-2}) with $K_2 \sim 2^{j_1}$ to obtain 
\begin{align*}
\text{(\ref{Va})} \lesssim &
2^{2 j_1} \sum_{k \geq 0} 2^{-5k/8} 
\| (\langle \xi \rangle^s f) * (\langle \xi \rangle^s \langle \tau-\xi^5 
\rangle^{1/2} g) \|_{L_{\tau,\xi}^2 (B_k)} \\
\lesssim & \sum_{k \geq 0} 2^{-k/8} \| f \|_{\hat{X}_{(2,1)}^{s,1/2}}
\| g  \|_{\hat{X}_{(2,1)}^{s,1/2}}. 
\end{align*}

In the same manner as above, we obtain the desired estimate in the case 
$2^{k_{\max}}=2^{k_2}$ by symmetry. 

\vspace{0.5em}

(Vb) We consider (\ref{es_hhl-1}) in the case $2^{k_{\max}}=2^k$. 
If $2^k \gg |\xi| 2^{4j_1}$, then we have $2^{k_{\max}} \sim 2^{k_1}$ or 
$2^{k_2}$. Thus we only prove (\ref{es_hhl-1}) in the case $2^{k_{\max}} \sim 
|\xi| 2^{4j_1}$. 

(Vb-1) Firstly, we prove (\ref{es_hhl-1}) in the case $-7/8<a<-1/4$.

(i) We first consider (\ref{es_hhl-1}) when $f*g$ is restricted to $D_1$. 
In this case, we have $2^{-3j_1/2 } \lesssim |\xi| \leq 1$ and 
$2^{5j_1/2 } \lesssim |\tau| \lesssim 2^{4j_1}$. 

(ia) In the case $a=-1/4$, we prove
\begin{align} \label{es-hhl-1}
2^{2j_1} \| |\xi|^{3/4} \langle \tau \rangle^{-1/4} f*g \|_{L_{\tau,\xi}^2}
\lesssim \| f \|_{\hat{X}_{(2,1)}^{s,1/2}} \| g \|_{\hat{X}_{(2,1)}^{s,1/2}}.
\end{align}
Since $|\tau| \sim |\xi| 2^{4j_1}$, we use (\ref{es_dy_1-2}) with 
$K \sim 2^{j_1}$ to have
\begin{align*}
\text{(L.H.S.)} \sim & 2^{-2sj_1 +j_1} \| |\xi|^{1/2} 
(\langle \xi \rangle^s f)*(\langle \xi \rangle^s g) \|_{L_{\tau,\xi}^2} \\
\lesssim & 2^{-2(s+1/4)j_1} \| f \|_{\hat{X}_{(2,1)}^{s,1/2}} 
\| g \|_{\hat{X}_{(2,1)}^{s,1/2}}.
\end{align*}

(ib) In the case $-7/8 \leq a <1/4$, we prove 
\begin{align} \label{es-hhl-2}
2^{2j_1} \sum_{k \geq 5j_1/2+O(1)} 2^{(3a/5-1/10) k} \| |\xi|^{a+1} 
f*g \|_{L_{\tau,\xi}^2 (B_k)} \lesssim 
\| f \|_{\hat{X}_{(2,1)}^{s,1/2}} \| g  \|_{\hat{X}_{(2,1)}^{s,1/2}}. 
\end{align}
Since $|\xi|^{a+1/2} \sim 2^{(a+1/2) k} 2^{-4aj_1-2j_1}$, we use 
(\ref{es_dy_1-2}) with $K \sim 2^{j_1} $ to obtain
\begin{align*}
\text{(L.H.S.)} \sim &  2^{-2sj_1-4aj_1} \sum_{k \geq 5j_1/2+O(1)} 
2^{\frac{8}{5}(a+\frac{1}{4}) k} \| |\xi|^{1/2} (\langle \xi \rangle^s f)*
(\langle \xi \rangle^s g)  \|_{L_{\tau,\xi}^2(B_k) } \\
\lesssim & 2^{-2sj_1+j_1}  \| |\xi|^{1/2} (\langle \xi \rangle^s f)*(\langle \xi \rangle^s g)  \|_{L_{\tau,\xi}^2} \\
\lesssim & 2^{-2(s+1/4) j_1} \| f \|_{\hat{X}_{(2,1)}^{s,1/2}} 
\| g \|_{\hat{X}_{(2,1)}^{s,1/2}}.
\end{align*}

(ii) We next consider (\ref{es_hhl-1}) when $f*g$ is restricted to $D_2$. In the present case, we have $2^{-4j_1} \leq |\xi| \lesssim 2^{-3j_1/2}$ and 
$1 \lesssim |\tau| \lesssim 2^{5j_1/2}$. 

In the case $-7/8<a \leq -1/4$, we prove 
\begin{align} \label{es_hhl-3}
2^{2j_1} \sum_{k \leq 5j_1/2+O(1)} 2^{(3a/5-1/10) k} 
\| |\xi|^{a+1} f*g \|_{L_{\tau,\xi}^2 (B_k)} \lesssim 
\| f \|_{\hat{X}_{(2,1)}^{s,1/2}} \| g \|_{\hat{X}_{(2,1)}^{s,1/2}}.
\end{align}
Since $|\xi| \sim 2^{k-4j_1}$, we use the H\"{o}lder inequality and the 
Young inequality to have
\begin{align*}
\text{(L.H.S.)} \sim & 2^{-2sj_1+2j_1} \sum_{k \leq 5j_1/2+O(1)} 
2^{(3a/5-1/10) k} \| |\xi|^{a+1} (\langle \xi \rangle^s f) * 
(\langle \xi \rangle^s g ) \|_{L_{\tau,\xi}^2 (B_k)} \\
\lesssim & 2^{-2sj_1+2j_1} \sum_{k \leq 5j_1/2+O(1)} 2^{(3a/5-1/10)k} \\
& \hspace{1.8cm} \times \| |\xi|^{a+1} \|_{L_{\xi}^2(|\xi| \sim 2^{k-4j_1})} 
\| (\langle \xi \rangle^s f )*( \langle \xi \rangle^s g) 
\|_{L_{\xi}^{\infty} L_{\tau}^2} \\
\sim & 2^{-2sj_1 -4a j_1-4j_1} \sum_{k \leq 5j_1/2+O(1)} 
2^{\frac{8}{5}(a+\frac{7}{8})k } 
\| \langle \xi \rangle^s f \|_{L_{\xi}^2 L_{\tau}^1} 
\| \langle \xi \rangle^s g \|_{L_{\tau,\xi}^2} \\
\lesssim & 2^{-2(s+1/4) j_1} \| f \|_{\hat{X}_{(2,1)}^{s,1/2}} 
\| g \|_{\hat{X}_{(2,1)}^{s,1/2}}. 
\end{align*}

(Vb-2) Secondly, when $a=-7/8$, we prove 
\begin{align*} 
2^{2j_1} \| |\xi|^{1/8} \langle \tau \rangle^{-5/8+\varepsilon_1/2} f*g  
\|_{L_{\tau,\xi}^2} \lesssim \| f \|_{\hat{X}_{(2,1)}^{s,1/2}} 
\| g \|_{\hat{X}_{(2,1)}^{s,1/2}}. 
\end{align*}
Since $2^{k} \sim |\xi| 2^{4j_1}$ and $s \geq -1/4+\varepsilon_2$, 
we use the H\"{o}lder inequality and the Young inequality to obtain 
\begin{align*}
\text{(L.H.S.)} \sim & 2^{(-2s-1/2+2\varepsilon_1)} \| |\xi|^{-1/2+\varepsilon_1/2} (\langle \xi \rangle^s f) * (\langle \xi \rangle^s g) 
\|_{L_{\tau,\xi}^2} \\
\lesssim & \| |\xi|^{-1/2+\varepsilon_1/2} \|_{L_{\xi}^2 (|\xi| \leq 1)} 
\| (\langle \xi \rangle^s f) * (\langle \xi \rangle^s g) \|_{L_{\xi}^{\infty} 
L_{\tau}^2} \\
\lesssim & \| f \|_{\hat{X}_{(2,1)}^{s,1/2}} \| g \|_{\hat{X}_{(2,1)}^{s,1/2}}.
\end{align*}

(Vb-3) Finally, when $-3/2 <a< -7/8$, we prove 
\begin{align} \label{es_hhl-2}
2^{2j_1} \| |\xi|^{a+1} \langle \tau \rangle^{-5/8+\varepsilon_1/2} f*g  \|_{L_{\tau,\xi}^2 }
\lesssim \| f \|_{\hat{X}_{(2,1)}^{s,1/2}} \| g  \|_{\hat{X}_{(2,1)}^{s,1/2}}.
\end{align}
Since $|\xi|^{a+1} \leq |\xi|^{-s/2}$ and $s \geq -1/4+2\varepsilon_1$, we have
\begin{align*}
|\xi|^{a+1} \langle \tau \rangle^{-5/8+ \varepsilon_1/2} \lesssim 
|\xi|^{-1/2-\varepsilon_1/2} 2^{(2s-2-2\varepsilon_1 )j_1}. 
\end{align*}
From this, we use the H\"{o}lder inequality and the Young inequality to obtain 
\begin{align*}
\text{(L.H.S.)} \lesssim & 2^{-2 \varepsilon_1 j_1} \| \langle \tau \rangle^{-1/2-\varepsilon_1/2} 
(\langle \xi \rangle^s f)* ( \langle  \xi \rangle^s g) \|_{L_{\tau,\xi}^2} \\
\lesssim & 2^{-2 \varepsilon_1 j_1} 
\| |\xi|^{-1/2-\varepsilon_1/2} \|_{L_{\xi}^2 (2^{-4j_1} \leq |\xi| )}
\| \langle \xi \rangle^s f \|_{L_{\xi}^2 L_{\tau}^1} \| 
\langle \xi \rangle^s g   \|_{L_{\tau,\xi}^2} \\
\lesssim 
& \| f \|_{\hat{X}_{(2,1)}^{s,1/2}} \| g \|_{\hat{X}_{(2,1)}^{s,1/2}}. 
\end{align*}

\vspace{0.5em}

(VI) Estimate for (viii). We prove 
\begin{align} \label{es_hlh-1}
2^{3j} \sum_{k \geq 0} 2^{-k/2} \| (\langle \xi \rangle^s f)* g   
\|_{L_{\tau,\xi}^2 (B_k)} \lesssim
 \| f\|_{\hat{X}_{(2,1)}^{s,1/2}} \| g \|_{\hat{X}_L^a}.
\end{align}
In the case $|\xi_2| \leq 2^{-4j}$, we easily obtain the desired estimate for 
$a \leq -1/4$. 
Hence we only consider the case $2^{4j} \leq |\xi_2| \leq 1$. 

\vspace{0.3em}

(VIa) We consider (\ref{es_hlh-1}) in the case $2^{k_{\max}}=2^{k}$. 
From (\ref{imb_a}), it suffices to show that
\begin{align*}
2^{3j} \| \langle \tau-\xi^5 \rangle^{-1/2+\varepsilon} 
(\langle \xi \rangle^s f) * g \|_{L_{\tau,\xi}^2}
\lesssim \| f \|_{\hat{X}_{(2,1)}^{s,1/2}} 
\| g \|_{\hat{X}_L^{a,3/8}}.
\end{align*}
Since 
$2^{(-1/2+\varepsilon)k} \lesssim 
|\xi_2|^{-1/4 } 2^{-j} 2^{(-1/4+\varepsilon)k_2}$, we use (\ref{es_dy_1-2}) with $K \sim 2^{j}$ to have 
\begin{align*}
2^{3j} \| \langle \tau-\xi^5 \rangle^{-1/2+\varepsilon} 
(\langle \xi \rangle^s f)* g \|_{L_{\tau,\xi}^2} 
\lesssim &
2^{2j} \| (\langle \xi \rangle^{s} f )* (|\xi|^{-1/4} 
\langle \tau \rangle^{-1/4+\varepsilon} g )\|_{L_{\tau,\xi}^2} \\
\lesssim & 
\| f \|_{\hat{X}_{(2,1)}^{s,1/2}} \| g \|_{\hat{X}_L^{-1/4,3/8}},
\end{align*}
which implies the desired estimate for $a \leq -1/4$. 

\vspace{0.3em}

(VIb) We consider (\ref{es_hlh-1}) in the case $2^{k_{\max}}=2^{k_1}$. 
Similar to above, it suffices to show 
\begin{align*}
2^{3j } \sum_{k \geq 0} \|(\langle \xi \rangle^s f) * g 
\|_{L_{\tau,\xi}^2(B_k)}  \lesssim 
\| f \|_{\hat{X}_{(2,1)}^{s,1/2}} \| g \|_{\hat{X}_L^{a,3/8}}.
\end{align*}
Since $2^{-k_1/2} \lesssim 2^{- k/12} |\xi_2|^{-1/4} 2^{-j} 2^{- k_2/6} $, we use (\ref{es_dy_2-2}) with $K_1 \sim 2^{j}$ 
to obtain 
\begin{align*}
\text{(L.H.S.)} \lesssim & 2^{2j} \sum_{k \geq 0} 2^{-7 k/12} 
\| (\langle \xi \rangle^s \langle \tau-\xi^5 \rangle^{1/2} f)* 
(|\xi|^{-1/4} \langle \tau \rangle^{-1/6} g) 
\|_{L_{\tau,\xi}^2(B_k)} \\
\lesssim & \sum_{k \geq 0} 2^{- k/12} \| f \|_{\hat{X}_{(2,1)}^{s,1/2}}
\| g \|_{\hat{X}_L^{-1/4,3/8}},
\end{align*}
which shows the required estimate. 

\vspace{0.3em}

(VIc) We consider (\ref{es_hlh-1}) in the case $2^{k_{\max}}=2^{k_2}$. 
If $2^{k_{\max}} \gg |\xi_2| 2^{4j}$, we have $2^{k_{\max}} \sim 2^{k}$ or $2^{k_{\max}} \sim 2^{k_1}$. 
We only prove the case $2^{k_{\max}} \sim |\xi_2| 2^{4j}$. 

(VIc-1) Firstly, we prove the following estimate in the case $a=-7/8$.
\begin{align*}
2^{3j} \sum_{k \geq 0} 2^{-k/2} 
\| (\langle \xi \rangle^s f)*  g \|_{L_{\tau,\xi}^2 (B_k)} \lesssim \| f \|_{\hat{X}_{(2,1)}^{s,1/2}} \| g \|_{\hat{X}_L^
{-7/8,3/8+\varepsilon_1/2} }. 
\end{align*}
From $|\xi_2|^{3/8} \langle \tau_2 \rangle^{-3/8-\varepsilon_1/2} \lesssim 
2^{-\varepsilon_1 k/2} 2^{-3j/2}$, 
we use (\ref{es_dy_3-2}) with $K_2 \sim 2^{j}$ to obtain 
\begin{align*}
\text{(L.H.S.)} \lesssim & 2^{3j/2} \sum_{k \geq 0}~2^{(-1/2-\varepsilon_1/2)k } ~\| (\langle \xi \rangle^s f)* (|\xi|^{-3/8} \langle \tau \rangle^{3/8+\varepsilon_1/2} g) 
\|_{L_{\tau,\xi}^2 (B_k)} \\
\lesssim & \sum_{k \geq 0} 2^{-\varepsilon_1 k/2} \| f \|_{\hat{X}_{(2,1)}^{s,1/2}} \| g \|_{\hat{X}_L^{-7/8,3/8+\varepsilon_1/2}}.
\end{align*}

(VIc-2) Secondly, we prove the following estimate in the case $-3/2<a<-7/8$. 
\begin{align*}
2^{3j} \sum_{k \geq 0} 2^{-k/2} \|( \langle \xi \rangle^s f)* g \|_{L_{\tau,\xi}^2(B_k)} \lesssim \| f \|_{\hat{X}_{(2,1)}^{s,1/2}} \| g \|_{\hat{X}_L^{a,3/8+\varepsilon_2/2}}.
\end{align*}
We use (\ref{es_dy_3-2}) with $K_2 \sim 2^{j}$ to obtain 
\begin{align*}
\text{(L.H.S.)} \lesssim 2^{3j/2} \sum_{k \geq 0}~1~\| f \|_{\hat{X}_{(2,1)}^{s,1/2}} \| |\xi|^{-1/2} g \|_{L_{\tau,\xi}^2}. 
\end{align*}
Following 
\begin{align*}
|\xi_2|^{-a-1/2} \langle \tau_2 \rangle^{-3/8-\varepsilon_2/2}
\lesssim |\xi_2|^{-a-7/8}~ 2^{-3j/2}~ 2^{-\varepsilon_2 k/2} 
\lesssim 2^{-3j/2}~ 2^{-\varepsilon_2 k/2}, 
\end{align*}
the right hand side is bounded by 
$C \displaystyle \sum_{k \geq 0} 2^{-\varepsilon_2 k/2} \| f \|_{\hat{X}_{(2,1)}^{s,1/2}} \| g \|_{\hat{X}_L^{a,3/8+\varepsilon_2/2}}$.

(VIc-3) Finally, we prove (\ref{es_hlh-1}) in the case 
$-7/8< a< -1/4$.
We consider (\ref{es_hlh-1}) when $g$ is restricted to $D_2$. In the present case, we have 
$ 2^{-4j} \leq |\xi_2| \lesssim 2^{-3j/2} $ and $ 1 \lesssim |\tau_2| \lesssim 2^{5j/2}  $. 

(ia) In the case $a=-1/4$, we prove 
\begin{align} \label{es_hlh-2}
2^{3j} \sum_{k \geq 0} 2^{-k/2} \| (\langle \xi \rangle^s f)* g  \|
_{L_{\tau,\xi}^2(B_k)}  \lesssim \| f \|_{\hat{X}_{(2,1)}^{s,1/2}} 
\|  g \|_{\hat{X}_L^{-1/4,3/4,1}}.
\end{align}
Since $|\xi_2| \sim 2^{k_2-4j}$, 
We use H\"{o}lder's inequality, Young's inequality and the triangle inequality
 to have
\begin{align*}
\text{(L.H.S.)} 
 \lesssim & 2^{3j} \| \langle \xi \rangle^s f \|_{L_{\xi}^2 L_{\tau}^1} 
\| g  \|_{L_{\xi}^1 L_{\tau}^2} \\ 
\lesssim & 2^{3j} \|  f \|_{\hat{X}_{(2,1)}^{s,1/2}} \| |\xi|^{1/4} \|_{L_{\xi_2}^2( |\xi_2| \sim 2^{k_2-4j}  )} \| |\xi|^{-1/4} g \|_{L_{\tau,\xi}^2} \\
\lesssim & 2^{3j} \| f \|_{\hat{X}_{(2,1)}^{s,1/2}} 2^{3k_2/4-3j} \sum_{k_2}
\| |\xi|^{-1/4} g \|_{L_{\tau,\xi}(B_{k_2})} \\
\lesssim & \| f \|_{\hat{X}_{(2,1)}^{s,1/2} } \| g \|_{\hat{X}_L^{-1/4,3/4,1}}.
\end{align*}

(ib) In the case $-7/8 \leq a <-1/4$, we prove 
\begin{align} \label{es_hlh-3}
2^{3j} \sum_{k \geq 0} 2^{-k/2} 
\| (\langle \xi \rangle^s f)* g \|_{ L_{\tau,\xi}^2 (B_k)} 
\lesssim \| f \|_{\hat{X}_{(2,1)}^{s,1/2}} \| g \|_{\hat{X}_L^{a,3a/5+9/10}}.
\end{align}
Since $2^{k_2 } \sim |\xi_2| 2^{4j}$, we use the H\"{o}lder inequality and Young inequality to have
\begin{align*}
(\text{L.H.S.}) \lesssim & 
2^{3j} \| \langle \xi \rangle^s f \|_{L_{\xi}^2 L_{\tau}^1} 
\|  g  \|_{L_{\xi}^1 L_{\tau}^2} \\
\lesssim & 2^{-\frac{12}{5} (a+\frac{1}{4}) j} \| f \|_{\hat{X}_{(2,1)}^{s,1/2}} 
\| |\xi|^{-8a/5-9/10}  \|_{L_{\xi}^2 (|\xi| \lesssim 2^{-3j/2})} 
\|  g \|_{\hat{X}_{L}^{a,3a/5+9/10}},
\end{align*}
which shows the desired estimate since 
$\| |\xi|^{-8a/5-9/10} \|_{L_{\xi}^2 (|\xi| \lesssim 2^{-3j/2})} 
\lesssim 2^{\frac{12}{5} (a+\frac{1}{4}) j}$. 

\vspace{0.5em}

(ii) We consider (\ref{es_hlh-1}) when $g$ is restricted to $D_1$. In this case, $2^{-3j/2} \lesssim 
|\xi_2| \leq 1$ and $ 2^{5j/2} \lesssim  |\tau_2| \lesssim 2^{4j}$.

(iia) Firstly, $g$ is restricted to $B_{[5j/2, 5j/2+ \alpha]}$ with 
$0 \leq \alpha \leq 3j/2$. 
From $2^{-3j/2} \lesssim |\xi| \lesssim 2^{-3j/2+\alpha}$, we use the 
H\"{o}lder inequality and Young inequality to obtain 
\begin{align*}
& \|\xi (\xi^2 f)* g \|_{\hat{X}_{(2,1)}^{s,-1/2}(B_{\geq 2 \alpha })}
\sim  2^{3j} \sum_{k \geq 2\alpha} 2^{-k/2}
\| (\langle \xi \rangle^s f) * g \|_{L_{\tau,\xi}^2(B_k)} \\
& \hspace{0.3cm} \lesssim  2^{3j}~2^{-\alpha} 
\| \langle \xi \rangle^s f  \|_{L_{\xi}^2 L_{\tau}^1}
\| g \|_{L_{\xi}^1 L_{\tau}^2 } \\
& \hspace{0.3cm} \lesssim  2^{ -\frac{12}{5}(a+\frac{1}{4})j} ~2^{-\alpha}
\| f \|_{\hat{X}_{(2,1)}^{s,1/2}} 
\| |\xi|^{-8a/5-9/10} \|_{L_{\xi}^2(2^{-3j/2} \lesssim 
|\xi| \lesssim 2^{-3j/2+\alpha})} \| g \|_{\hat{X}_L^{a,3a/5+9/10}}. 
\end{align*}
In the case $a=-1/4$, the right hand side is bounded by $\sqrt{\alpha}~ 
2^{-\alpha} \| f \|_{\hat{X}_{(2,1)}^{s,1/2}} \| g \|_{\hat{X}_L^{-1/4,3/4}}$ 
because 
\begin{align*} 
\| |\xi_2|^{-1/2} \|_{L_{\xi_2}^2 (2^{-3j/2 } \lesssim |\xi_2| 
\lesssim 2^{2^{-3j/2+\alpha}} )}
\lesssim \sqrt{\alpha}.
\end{align*}
In the case $-7/8 < a <-1/4$, that is bounded by 
$ 2^{-\frac{8}{5} (a+\frac{7}{8}) \alpha} 
\| f \|_{\hat{X}_{(2,1)}^{s,1/2}} \| g \|_{\hat{X}_L^{a,3a/5+9/10}}$ since 
\begin{align*}
\| |\xi|^{-8a/5-9/10} \|_{L_{\xi}^2 (|\xi| \lesssim 2^{-3j/2+\alpha})}
\lesssim 2^{\frac{12}{5} (a+\frac{1}{4}) j}~ 
2^{-\frac{8}{5}(a +\frac{1}{4}) \alpha }.
\end{align*}
We put a sufficiently small number $\varepsilon_3$ such that 
$0< \varepsilon_3 \leq 8(a+7/8)/5$. 
Then we obtain, for $-7/8<a \leq -1/4$, 
\begin{align} \label{es_hlh_a}
\| \xi (\xi^2 f) * g \|_{\hat{X}_{(2,1)}^{s,-1/2} (B_{\geq 2\alpha}) }
\lesssim 2^{-\varepsilon_3 \alpha} \| f \|_{\hat{X}_{(2,1)}^{s,1/2}} 
\| g \|_{\hat{X}_L^{a,3a/5+9/10}}.
\end{align}

(iib) Secondly, $g$ is restricted to $B_{[5j/2+\gamma, 4j]}$ with 
$0 \leq \gamma \leq 2^{3j/2}$. Then we use (\ref{es_dy_3-2}) with $2^{k_2} \sim 2^{j}$ to have
\begin{align*}
\| \xi (\xi^2 f)* g \|_{\hat{X}_{(2,1)}^{s,-1/2}(B_{\leq 2 \alpha} )} \sim & 
2^{3j} \sum_{k \leq 2\alpha} 2^{-k/2} 
\| (\langle \xi \rangle^s f) * g \|_{L_{\tau,\xi}^2(B_k)} \\
\lesssim & 2^{3j/2} \sum_{k \leq 2\alpha} 1~
\| f \|_{\hat{X}_{(2,1)}^{s,1/2}} \| |\xi|^{-1/2} g  \|_{L_{\tau,\xi}^2
(2^{-3j/2+\gamma} \lesssim |\xi|)} \\
\lesssim & \alpha 2^{3j/2} \| f \|_{\hat{X}_{(2,1)}^{s,1/2}} 
\| |\xi|^{-1/2} g \|_{L_{\tau,\xi}^2(2^{-3j/2+\gamma \lesssim |\xi|})}, 
\end{align*}
which is bounded by 
\begin{align*}
\alpha 2^{-\frac{8}{5}(a+\frac{7}{8}) \gamma} \| f \|_{\hat{X}_{(2,1)}^{s,1/2}}
\| g \|_{\hat{X}_L^{a,3a/5+9/10}},
\end{align*}
since $2^{-3j/2+\gamma} \lesssim |\xi_2| \leq 1$ and
\begin{align*}
|\xi_2|^{-a-1/2} \langle \tau_2 \rangle^{-3a/5-9/10}
\sim & |\xi_2|^{-\frac{8}{5}(a+\frac{7}{8})} 
2^{(-\frac{12}{5}a-\frac{18}{5}) j} 
\lesssim  2^{-3j/2}~2^{-\frac{8}{5}(a+ \frac{7}{8}) \gamma }.
\end{align*}
Therefore we obtain 
\begin{align} \label{es_hlh_b}
\| \xi (\xi^2 f)*g \|_{\hat{X}_{(2,1)}^{s,-1/2} (B_{\leq 2 \alpha})} \lesssim 
\alpha 2^{-\varepsilon_3 \gamma} \| f \|_{\hat{X}_{(2,1)}^{s,1/2}} \| g \|_{\hat{X}_L^{a,3a/5+9/10}}.
\end{align}
If $g$ is restricted to 
$B_{[5j/2+\gamma,5j/2+\alpha]}$ with $\gamma < \alpha$, 
from (\ref{es_hlh_a}) and (\ref{es_hlh_b}), we have 
\begin{align} \label{es_hlh_ab}
\| \xi (\xi^2 f)* g \|_{\hat{X}_{(2,1)}^{s,-1/2}} \lesssim \bigl( 2^{-\varepsilon_3 \alpha}+ \alpha 2^{-\varepsilon_3 \gamma} \bigr) 
\|  f \|_{\hat{X}_{(2,1)}^{s,1/2}} \| g \|_{\hat{X}_L^{a,3a/5+9/10}}.
\end{align}
Let $\{ a_n \}_{n=0}^{N}$ be the decreasing sequence defined by 
\begin{align*}
a_0= \frac{3}{2}j, \hspace{0.3cm} a_{n+1}=\frac{1}{2} a_n, \hspace{0.3cm} 
0<a_N \leq \frac{3}{2},
\end{align*}
where $N$ is a minimum integer such that $N \geq \log_2 j$. We first apply with $\alpha=a_0$ and $\gamma=a_1$ and next apply with $\alpha=a_1$ and $\gamma=a_2$. Repeating this procedure at the end we apply with $\alpha=a_N$ and 
$\gamma=0$. From (\ref{es_hlh_ab}), we obtain 
\begin{align*}
\| \xi (\xi^2 f) * g \|_{\hat{X}_{(2,1)}^{s,-1/2}} \lesssim 
\bigl(1+ \sum_{n=0}^N \frac{1}{a_n} \bigr)
\| f \|_{\hat{X}_{(2,1)}^{s,1/2}} \| g \|_{\hat{X}_L^{a,3a/5+9/10}},
\end{align*}
which shows the claim since $\displaystyle \sum_{n=0}^N \frac{1}{a_n}$ is bounded uniformly in $j$. 

\vspace{1em}

 Next, we prove (\ref{BE-Y}) except the case (i). We use the triangle inequality and the Schwarz inequality to have  
\begin{align} \label{es-L-1}
\| f \|_{L_{\tau}^1} \lesssim \sum_{k \geq 0} \| f \|_{L_{\tau}^1 (B_k)} 
\lesssim \sum_{k \geq 0} 2^{k/2}~\| f \|_{L_{\tau}^2 (B_k)}.
\end{align}
From (\ref{es-L-1}), we have, for all $j \neq 0$, 
\begin{align*}
\| \langle \xi \rangle^{s-a} ~|\xi|^{a+1} \langle \tau-\xi^5 \rangle^{-1} (\xi^2 f)* g  \|_{L_{\xi}^2 L_{\tau}^1(A_j)} \lesssim 
\| \xi (\xi^2 f)* g \|_{\hat{X}_{(2,1)}^{s,-1/2}(A_j)}.
\end{align*}
Therefore we obtain (\ref{BE-Y}) for $j \neq 0$ from the proof of 
(\ref{BE-X}). Here 
we only prove (\ref{BE-Y}) in the case (vi). 

\vspace{0.5em}

(VII) Estimate for (vi).  We prove 
\begin{align} \label{Y-vi}
2^{2j_1}~ \| |\xi|^{a+1}~f*g  \|_{L_{\xi}^2 L_{\tau}^1 (A_0)} 
\lesssim \| f \|_{\hat{X}_{(2,1)}^{s,1/2}} \| g \|_{\hat{X}_{(2,1)}^{s,1/2}}.
\end{align}

We consider (\ref{Y-vi}) in the case $|\xi| \leq 2^{-4j_1}$. Since the left hand side of (\ref{Y-vi}) is bounded by $C \| |\xi|^{a+1} \langle \tau \rangle^{-1/4+\varepsilon} f* g \|_{L_{\tau,\xi}^2} $, we obtain the desired estimate in the same manner as (V).
 Thus we only consider the case $2^{-4 j_1} \leq |\xi| \leq 1$ below. 

If $2^{k_{\max}}=2^{k_1}$ or $2^{k_2}$, the left hand side of (\ref{Y-vi}) 
is bounded by  \\
$C \displaystyle 2^{2j_1} \sum_{k \geq 0} 2^{-k/4} 
\| |\xi|^{a+1} f*g  \|_{L_{\tau,\xi}^2(B_k)}$. In the same manner as (V), 
we obtain (\ref{Y-vi}) in this case. 
We consider the case $2^{k_{max}}=2^{k}$. Since $|\xi|^{a+1} 2^{-2sj_1} \lesssim (|\xi| 2^{4j_1} )^{-s/2} \lesssim 2^{k/8} $,  we use the H\"{o}ler inequality and the Young inequality to have
\begin{align*}
\text{(L.H.S)} & \lesssim 2^{-2sj_1+2j_1} \| |\xi|^{a+1} \langle \tau 
\rangle^{-1}
(\langle \xi \rangle^{s } f)* (\langle \xi \rangle^s g)  \|_{L_{\xi}^2 L_{\tau}^1 } \\ 
& \lesssim 2^{2j_1} \| \langle \tau \rangle^{-7/8} 
(\langle \xi \rangle^{s } f) * 
(\langle \xi \rangle^s g)  \|_{L_{\xi}^2 L_{\tau}^1 } \\ 
& \lesssim 
2^{-3j_1/2} \| |\xi|^{-7/8}  \|_{L_{\xi}^2( 2^{-4j_1} \leq |\xi| )}
\| (\langle \xi \rangle^s f )* (\langle \xi \rangle^s g ) 
\|_{L_{\xi}^{\infty} L_{\tau}^1} \\
& \lesssim \| f \|_{\hat{X}_{(2,1)}^{s,1/2}} \| g \|_{\hat{X}_{(2,1)}^{s,1/2}}.
\end{align*}
\end{proof}

\section{Proof of the trilinear estimates}
In this section, we prove the trilinear estimate (\ref{TE-1}). 
This estimate is reduced to some bilinear estimates by using the $[k;Z]$-
multiplier norm method introduced by Tao \cite{Ta}. Here we recall notations and general frame work of the $[k;Z]$-multiplier norm method. For the details, see \cite{Ta}.

Let $Z$ be an abelian additive group with an invariant measure $d \xi$ (for instance $\mathbb{R}^n$, $\mathbb{T}^n$). For any integer $k \geq 2$, we let $\Gamma_{k}(Z)$ denote the hyperplane
\begin{align*}
\Gamma_{k}(Z):=\bigl\{ (\xi_1, \cdots, \xi_k) \in Z^{k}~;~ \xi_1+\cdots+\xi_k
=0 \bigr\}.
\end{align*}
A $[k;Z]-$multiplier is defined to be any function $m~;~\Gamma_{k}(Z) \rightarrow \mathbb{C}$. Then we define the multiplier norm $\| m \|_{[k;Z]}$ to be the best constant such that the inequality
\begin{align*}
\Bigl| \int_{\Gamma_{k}(Z)} m(\xi) \prod_{i=1}^{k} f_i(\xi_i) d\xi_i \Bigr|
\leq C \prod_{i=1}^{k} \| f_i \|_{L^2(Z)},
\end{align*}
for all functions $f_i$ on $Z$. This multiplier norm has the composition rule and the $T T^{*}$ identity as follows.
\begin{lem} \label{com_id}
If $k_1$, $k_2 \geq 1$ and $m_1$, $m_2$ are functions on $Z^{k_1}$ and $Z^{k_2}$ respectively, then 
\begin{align} \label{com_m}
& \| m_{1} (\xi_1, \cdots, \xi_{k_1}) m_2 (\xi_{k_1+1}, \cdots, \xi_{k_1+k_2})
\|_{[k_1+k_2; Z]} \nonumber \\
& \hspace{1.8cm} \leq \|m_1 (\xi_1,\cdots, \xi_{k_1} )\|_{[k_1+1;Z]}
\| m_2(\xi_{1}, \cdots ,\xi_{k_2} )  \|_{[k_2+1,Z]}.
\end{align}
As a special case we have the $T T^{*}$ identity
\begin{align} \label{id_m}
 \| m (\xi_1, \cdots, \xi_{k}) \overline{ m (-\xi_{k+1}, \cdots, -\xi_{2k} ) }
\|_{[2k; Z]} = \|m (\xi_1,\cdots, \xi_{k} )\|_{[k+1;Z]}^2.
\end{align}
for all functions $m~;~Z^{k } \rightarrow \mathbb{R}$.
\end{lem}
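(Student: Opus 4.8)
The plan is to prove both statements directly from the definition of $\|\cdot\|_{[k;Z]}$ as the best constant in the stated dual inequality, using nothing beyond Fubini's theorem, the Cauchy--Schwarz inequality, and the symmetries $\xi\mapsto-\xi$ and complex conjugation; the whole content is the algebra of the constraint set $\Gamma_{k}(Z)$.

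\emph{Composition rule \eqref{com_m}.} On $\Gamma_{k_1+k_2}(Z)$ the relation $\xi_1+\cdots+\xi_{k_1+k_2}=0$ lets me introduce the auxiliary variable $\eta:=\xi_1+\cdots+\xi_{k_1}=-(\xi_{k_1+1}+\cdots+\xi_{k_1+k_2})$ and disintegrate the integral over $\Gamma_{k_1+k_2}(Z)$ as $\int_Z d\eta$ of the product of two slice integrals, one over $\{\xi_1+\cdots+\xi_{k_1}=\eta\}$ and one over $\{\xi_{k_1+1}+\cdots+\xi_{k_1+k_2}=-\eta\}$. Fixing the $f_i$, I would set
\[
F(\eta):=\int_{\xi_1+\cdots+\xi_{k_1}=\eta} m_1\,\prod_{i=1}^{k_1}f_i(\xi_i),\qquad
G(\eta):=\int_{\xi_{k_1+1}+\cdots+\xi_{k_1+k_2}=\eta} m_2\,\prod_{i=k_1+1}^{k_1+k_2}f_i(\xi_i).
\]
Plugging an arbitrary test function $g\in L^2(Z)$ into the last slot of the defining inequality for $\|m_1\|_{[k_1+1;Z]}$ (whose frequency is forced to be $-\eta$) and taking the supremum over $\|g\|_{L^2}\le1$ gives $\|F\|_{L^2}\le\|m_1\|_{[k_1+1;Z]}\prod_{i=1}^{k_1}\|f_i\|_{L^2}$, and symmetrically for $G$. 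Since $\int_{\Gamma_{k_1+k_2}(Z)}m_1m_2\prod f_i=\int_Z F(\eta)G(-\eta)\,d\eta$, Cauchy--Schwarz in $\eta$ followed by the supremum over unit-norm $f_i$ yields \eqref{com_m}.

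\emph{The $TT^*$ identity \eqref{id_m}.} The upper bound $\|m(\xi_1,\ldots,\xi_k)\overline{m(-\xi_{k+1},\ldots,-\xi_{2k})}\|_{[2k;Z]}\le\|m\|_{[k+1;Z]}^2$ follows from \eqref{com_m} once I observe, by the change of variables $\xi_i\mapsto-\xi_i$ (which preserves $\Gamma_{k+1}(Z)$ and its measure) together with conjugating the whole integral, that $\|\overline{m(-\xi_1,\ldots,-\xi_k)}\|_{[k+1;Z]}=\|m\|_{[k+1;Z]}$. For the reverse inequality I would realize $\|m\|_{[k+1;Z]}$ as the norm of the $k$-linear operator $T:L^2(Z)^k\to L^2(Z)$,
\[
T(g_1,\ldots,g_k)(\eta):=\int_{\xi_1+\cdots+\xi_k=\eta} m(\xi_1,\ldots,\xi_k)\,\prod_{i=1}^k g_i(\xi_i),
\]
so that, again by $L^2$ duality in the last slot, $\|m\|_{[k+1;Z]}=\sup\{\|T(g_1,\ldots,g_k)\|_{L^2}:\|g_i\|_{L^2}\le1\}$. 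The same disintegration as above identifies $\int_{\Gamma_{2k}(Z)}m(\xi_1,\ldots,\xi_k)\overline{m(-\xi_{k+1},\ldots,-\xi_{2k})}\prod_{i=1}^{2k}f_i$ with the inner product $\langle T(f_1,\ldots,f_k),T(h_1,\ldots,h_k)\rangle_{L^2}$, where $h_i(\xi):=\overline{f_{k+i}(-\xi)}$. Given $\varepsilon>0$, I choose $f_1,\ldots,f_k$ of unit norm with $\|T(f_1,\ldots,f_k)\|_{L^2}\ge\|m\|_{[k+1;Z]}-\varepsilon$ and then set $f_{k+i}(\xi):=\overline{f_i(-\xi)}$, which forces $h_i=f_i$; the inner product becomes $\|T(f_1,\ldots,f_k)\|_{L^2}^2\ge(\|m\|_{[k+1;Z]}-\varepsilon)^2$, and letting $\varepsilon\to0$ gives the matching lower bound.

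The only point requiring genuine care is bookkeeping: making the disintegration of $\Gamma_{k_1+k_2}(Z)$ and of $\Gamma_{2k}(Z)$ precise (that Fubini applies with the correct slice measures, with the opposite signs $\eta$ and $-\eta$ on the two factors), and tracking the reflections $\xi\mapsto-\xi$ and the conjugations consistently so that the functions one actually tests against have the claimed $L^2$ norms. There is no analytic obstacle beyond Cauchy--Schwarz; this lemma is essentially the bookkeeping around a $TT^*$-type duality argument (cf. Tao \cite{Ta}).
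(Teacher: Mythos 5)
Your argument is correct, and it is essentially the standard duality/Cauchy--Schwarz proof: the paper itself offers no proof of this lemma, deferring to Lemma 3.7 of Tao's paper \cite{Ta}, where the composition rule and the $TT^*$ identity are established by exactly the disintegration over $\eta=\xi_1+\cdots+\xi_{k_1}$ and the extremizing choice $f_{k+i}(\xi)=\overline{f_i(-\xi)}$ that you describe. The only trivial point to add is the degenerate case $\|m\|_{[k+1;Z]}=\infty$ in the lower bound of \eqref{id_m}, where "choose $f_i$ with $\|T(f_1,\dots,f_k)\|_{L^2}\geq \|m\|_{[k+1;Z]}-\varepsilon$" should be replaced by choosing $f_i$ making $\|T(f_1,\dots,f_k)\|_{L^2}$ arbitrarily large, which your argument handles verbatim.
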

For the details, Lemma $3.7$ in \cite{Ta}. 

We estimate (\ref{TE-1}). Schwarz's inequality implies 
\begin{align*}
\| \langle \xi \rangle^{s-a} |\xi|^a f \|_{L_{\xi}^2 L_{\tau}^1} 
\lesssim \| \langle \xi \rangle^{s-a} |\xi|^a \langle \tau-\xi^5 
\rangle^{1/2+\varepsilon} f \|_{L_{\tau,\xi}^2},
\end{align*}
where $\varepsilon >0$ is sufficiently small. Therefore it suffices to show 
\begin{align*}
& \Bigl\|  |\xi_4| \langle \tau_4-\xi_4^5 \rangle^{-1}
\int_{\mathbb{R}^4} f(\tau_1,\xi_1) g(\tau_2,\xi_2)  h(\tau_3,\xi_3) d\tau_1 d\xi_1 d\tau_2 d\xi_2 
\Bigl\|_{\hat{Z}^{s,a}} \\
+& \Bigl\| \langle \xi_4 \rangle^{s-a} |\xi_4|^{a+1} \langle \tau_4-\xi_4^5 
\rangle^{-1/2+\varepsilon} 
\int_{\mathbb{R}^4} f(\tau_1,\xi_1) g(\tau_2,\xi_2)  h(\tau_3,\xi_3) d\tau_1 d\xi_1 d\tau_2 d\xi_2 
\Bigl\|_{L_{\tau_4,\xi_4}^2} \\
\lesssim & 
\| f \|_{\hat{Z}^{s,a}} \|g \|_{\hat{Z}^{s,a}}\| h \|_{\hat{Z}^{s,a}},
\end{align*}
where $\tau_1+\tau_2+\tau_3+\tau_4=0$ and $\xi_1+ \xi_2+\xi_3+\xi_4=0$. 
By symmetry, without loss of generality, we can assume that $|\xi_3| \leq |\xi_2| \leq |\xi_1|$. We put 
\begin{align*}
\Omega_{0}:= \bigl\{ (\vec{\tau}, \vec{\xi}) \in \mathbb{R}^6~;~|\xi_1| \leq 100 \text{ or } |\xi_2|, |\xi_4| \leq 100 \bigr\},
\end{align*}
where $\vec{\tau} =(\tau_1,\tau_2,\tau_3) $ and $\vec{\xi}=(\xi_1,\xi_2,\xi_3)$. Combining the H\"{o}lder inequality and the Young inequality, 
we easily obtain (\ref{TE-1}) in $\Omega_0$. Thus we only consider (\ref{TE-1}) in $\mathbb{R}^6 \setminus \Omega_0$. We divide $\mathbb{R}^6 \setminus \Omega_0$ into five parts as follows. 
\begin{align*} 
\Omega_1:= & \bigl\{(\vec{\tau}, \vec{\xi}) \in \mathbb{R}^6 \setminus \Omega_0~;~|\xi_3| \geq 1 \text{ and } |\xi_4| \geq 1  \bigr\}, \\
\Omega_2:= & \bigl\{( \vec{\tau} ,\vec{\xi}) \in \mathbb{R}^6 \setminus \Omega_0~;~|\xi_3| \geq 1 \text{ and } |\xi_4| \leq 1  \bigr\}, \\
\Omega_3:= & \bigl\{(\vec{\tau}, \vec{\xi}) 
\in \mathbb{R}^6 \setminus \Omega_0~;~
|\xi_2|,~|\xi_4| \geq 1 \text{ and } |\xi_3| \leq 1  \bigr\}, \\
\Omega_4:= & \bigl\{( \vec{\tau}, \vec{\xi}) \in \mathbb{R}^6 \setminus \Omega_0~;~
|\xi_2| \geq 1 \text{ and } |\xi_3|,~ |\xi_4| \leq 1  \bigr\}, \\
\Omega_5:= & \bigl\{(\vec{\tau}, \vec{\xi}) \in \mathbb{R}^6 \setminus \Omega_0~;~
|\xi_1|,~|\xi_4| \geq 1 \text{ and } |\xi_2| \leq 1  \bigr\}.
\end{align*}
We reduce the trilinear inequality by using the composition rule (\ref{com_m}) and the $T T^{*}$ identity (\ref{id_m}). 

\vspace{0.3em}

(A) Estimate in $\Omega_1$. It suffices to show that
\begin{align*}
\Bigl\| \chi_{\Omega_1}~ \frac{\langle \xi_4 \rangle^{s+1}}{\langle \tau_4-\xi_4^5 \rangle^{1/2-\varepsilon} } \prod_{i=1}^3 
\frac{ \langle \xi_i \rangle^{-s}}{ \langle \tau_i-\xi_i^5 \rangle^{1/2}} 
\Bigr\|_{[4; \mathbb{R}^2]} \lesssim 1, 
\end{align*}
where $\varepsilon>0$ is sufficiently small. 
Following $\langle \xi_4 \rangle^{s+1} \lesssim \langle \xi_4 \rangle^{1/2}
\langle \xi_1 \rangle^{s+1/2}$ for $s \geq -1/2$, we use the $T T^{*}$ identity (\ref{id_m}) to have 
\begin{align*}
\text{(L.H.S.)} \lesssim & 
\Bigl\| \chi_{\Omega_1}~ \frac{\langle \xi_4 \rangle^{1/2} 
\langle \xi_1 \rangle^{1/2} }{ \langle \tau_4-\xi_4^5 \rangle^{1/2-\varepsilon} \langle \tau_1-\xi_1^5 \rangle^{1/2} } 
\prod_{i=2}^3 
\frac{ \langle \xi_i \rangle^{-s}}{ \langle \tau_i-\xi_i^5 \rangle^{1/2}} 
\Bigr\|_{[4; \mathbb{R}^2]} \\
\lesssim & 
\Bigl\| \chi_{\{ |\xi_1|,~|\xi_2| \geq 1 \}} (\xi_1,\xi_2) ~ 
\frac{\langle \xi_1 \rangle^{-s} \langle \xi_2 \rangle^{1/2} }{
\langle \tau_1-\xi_1^5 \rangle^{1/2} \langle \tau_2-\xi_2^5 \rangle^{1/2-\varepsilon} } \Bigr\|_{[3; \mathbb{R}^2]}^2. 
\end{align*}
Therefore the trilinear estimate in $\Omega_1$ 
is reduced to the bilinear estimate 
\begin{align} \label{TBE-1}
\| f* g \|_{L_{\tau,\xi}^2 } \lesssim \| p_h f \|_{\hat{X}_{(2,2)}^{s,1/2}} 
\| p_h g \|_{\hat{X}_{(2,2)}^{-1/2,1/2-\varepsilon}},
\end{align}
where $\hat{X}_{(2,2)}^{s,b}$ is defined by the norm 
\begin{align*}
\| f \|_{\hat{X}_{(2,2)}^{s,b}}:= 
\| \langle \xi \rangle^s \langle \tau-\xi^5 \rangle^b 
f \|_{L_{\tau,\xi}^2} \text{ for } s,b \in \mathbb{R}.
\end{align*}

(B) Estimate in $\Omega_2$. It suffices to show that 
\begin{align*}
\Bigl\| \chi_{\Omega_2}~ \frac{|\xi_4|^{a+1}}{\langle \tau_4-\xi_4^5 \rangle^{1/4} } \prod_{i=1}^3 
\frac{ \langle \xi_i \rangle^{-s}}{ \langle \tau_i-\xi_i^5 \rangle^{1/2}} 
\Bigr\|_{[4; \mathbb{R}^2]} \lesssim 1. 
\end{align*}
We use the composition rule (\ref{com_m}) to have 
\begin{align*}
\text{(L.H.S.)} \lesssim & 
\Bigl\| \chi_{ \{|\xi_1| \leq 1, |\xi_2| \geq 1 \}}(\xi_1,\xi_2) ~ 
\frac{|\xi_1|^{a+1} \langle \xi_2 \rangle^{-s}}
{\langle \tau_1-\xi_1^5 \rangle^{1/4} \langle \tau_2-\xi_2^5 \rangle^{1/2} } 
\Bigr\|_{[3; \mathbb{R}^2]} \\
& \hspace{0.6cm} \times \Bigl\| \chi_{\{ |\xi_1|, |\xi_2| \geq 1 \}} 
(\xi_1,\xi_2) \prod_{i=1}^2
\frac{ \langle \xi_i \rangle^{-s}}{ \langle \tau_i-\xi_i^5 \rangle^{1/2}} 
\Bigr\|_{[3; \mathbb{R}^2]},
\end{align*}
which shows that the trilinear estimate in $\Omega_2$ is reduced to 
\begin{align} \label{TBE-2}
\| f* g \|_{L_{\tau,\xi}^2} \lesssim \| p_l f \|_{\hat{X}_L^{-a-1,1/4}}
\| p_h g \|_{\hat{X}_{(2,2)}^{s,1/2}}
\end{align}
and 
\begin{align*}
\| f*g \|_{L_{\tau,\xi}^2} \lesssim \| p_h f \|_{\hat{X}_{(2,2)}^{s,1/2}}
\| p_h g \|_{\hat{X}_{(2,2)}^{s,1/2}}. 
\end{align*}

(C) Estimate in $\Omega_3$. It suffices to show that 
\begin{align*}
\Bigl\| \chi_{\Omega_3}~ \frac{\langle \xi_4 \rangle^{s+1}}{\langle \tau_4-\xi_4^5 \rangle^{1/2-\varepsilon} } \prod_{i=1}^2 
\frac{ \langle \xi_i \rangle^{-s}}{ \langle \tau_i-\xi_i^5 \rangle^{1/2}} 
 \frac{ |\xi_3|^{-a} }{ \langle \tau_3-\xi_3^5 \rangle^{3/8}} 
\Bigr\|_{[4; \mathbb{R}^2]} \lesssim 1. 
\end{align*}
Following $\langle \xi_4 \rangle^{s+1} \lesssim \langle \xi_4 \rangle^{1/2}
\langle \xi_1 \rangle^{s+1/2} $ for $s \geq -1/2$, 
we use the composition rule (\ref{com_m}) to obtain 
\begin{align*}
\text{(L.H.S.)} \lesssim & 
\Bigl\| \chi_{\Omega_3}~ \frac{\langle \xi_4 \rangle^{1/2} 
\langle \xi_1 \rangle^{1/2} \langle \xi_2 \rangle^{-s} |\xi_3|^{-a} }
{\langle \tau_4-\xi_4^5 \rangle^{1/2-\varepsilon} \langle \tau_1-\xi_1^5 
\rangle^{1/2} \langle \tau_2-\xi_2^5 \rangle^{1/2} 
\langle \tau_3-\xi_3^5 \rangle^{3/8} } \Bigr\|_{[4; \mathbb{R}^2]} \\
\lesssim & \Bigl\| \chi_{ \{|\xi_1| \leq 1, |\xi_2| \geq 1 \}}(\xi_1,\xi_2) ~ 
\frac{|\xi_1|^{-a} \langle \xi_2 \rangle^{1/2}}
{\langle \tau_1-\xi_1^5 \rangle^{3/8} \langle \tau_2-\xi_2^5 \rangle^{1/2-\varepsilon} } \Bigr\|_{[3; \mathbb{R}^2]} \\
& \hspace{0.6cm} \times \Bigl\| \chi_{\{ |\xi_1|, |\xi_2| \geq 1 \}} 
(\xi_1,\xi_2) \frac{\langle \xi_1 \rangle^{1/2} \langle \xi_2 \rangle^{-s}}
{ \langle \tau_1-\xi_1^5 \rangle^{1/2-\varepsilon} 
\langle \tau_2-\xi_2^5 \rangle^{1/2}}  \Bigr\|_{[3; \mathbb{R}^2]},
\end{align*}
which implies that the trilinear estimate in $\Omega_3$ is reduced to 
(\ref{TBE-1}) and 
\begin{align} \label{TBE-3}
\| f*g \|_{L_{\tau,\xi}^2} \lesssim  \| p_l f \|_{\hat{X}_L^{a,3/8}}
\| p_h g \|_{\hat{X}_{(2,2)}^{-1/2,1/2-\varepsilon}}.
\end{align}

Similar to above, in other cases, the trilinear estimate is reduced to the bilinear estimates (\ref{TBE-1}), (\ref{TBE-2}) and (\ref{TBE-3}). 
We remark that Chen, Li, Miao and Wu \cite{CLMW} 
proved (\ref{TBE-1}) for $s \geq -1/4$ by 
using the block estimates established by Tao \cite{Ta}. For the details, see 
Lemma $5.2$ in \cite{CLMW}. 
Thus we omit the proof of (\ref{TBE-1}) and give the proofs of (\ref{TBE-2}) and (\ref{TBE-3}). 
From $L_{\xi}^2$-property of $\hat{X}_{(2,2)}^{s,b}$ and $\hat{X}_{L}^{a,b}$, it suffices to show two lemmas as follows. 

\begin{lem} \label{lem_TE_2}
Let $s \geq -1/4$ and $-3/2 <a \leq -1/4$. Suppose that $f$ is supported on $A_0$ and $g$ is supported on $A_{j_2}$ for $j_2 >0$. Then we have, for $j \geq 0$,
\begin{align} \label{TE-L-1}
\| f*g \|_{L_{\tau,\xi}^2 (A_j) } \lesssim C(j,j_1,j_2)
\| p_l f \|_{\hat{X}_{(2,2)}^{-a-1,1/4}}
\| p_h g \|_{\hat{X}_{(2,2)}^{s,1/2}},
\end{align}
in the cases (i) and (vii) of Proposition~\ref{prop_BE-2}.
\end{lem}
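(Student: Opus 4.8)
The plan is to reduce (\ref{TE-L-1}) to the smoothing estimates of Section~2 by decomposing in frequency and modulation. Write $\sigma=\tau-\xi^5$, $\sigma_i=\tau_i-\xi_i^5$ and record the resonance identity
\[
\sigma-\sigma_1-\sigma_2=\xi_1^5+\xi_2^5-\xi^5=-5\,\xi_1\xi_2\xi\,(\xi_1^2+\xi_1\xi_2+\xi_2^2).
\]
In case~(i) all three of $j,j_1,j_2$ (here $j_1=0$) are bounded, and (\ref{TE-L-1}) follows from the H\"older and Young inequalities exactly as in the case-(i) estimate in the proof of Proposition~\ref{prop_BE-2}. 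So assume we are in case~(vii): $j_1=0$, $j,j_2\ge30$. Then $|\xi_1|\le2$, $\langle\xi_2\rangle\sim\langle\xi\rangle\sim2^{j_2}$, $|j-j_2|\le1$, the quantities $K_1:=\inf|\xi+\xi_2|$ and $K_2:=\inf|\xi+\xi_1|$ satisfy $K_1\sim K_2\sim2^{j_2}$, and the identity gives $2^{k_{\max}}\gtrsim|\xi_1|\,2^{4j_2}$, where $2^{k_{\max}}=\max\{\langle\sigma\rangle,\langle\sigma_1\rangle,\langle\sigma_2\rangle\}$. I would also split the low frequency dyadically, $|\xi_1|\sim2^{-l}$, $l\ge0$.

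On the part where $|\xi_1|$ is so small that the identity gives no effective gain I would argue directly. Inserting the norm weights $w_1=|\xi_1|^{-a-1}\langle\sigma_1\rangle^{1/4}$, $w_2=\langle\xi_2\rangle^{s}\langle\sigma_2\rangle^{1/2}$ and applying Cauchy--Schwarz, the contribution of this part to $\|f*g\|_{L^2(A_j)}$ is bounded by $\big(\sup_{(\tau,\xi)\in A_j}\int_{\Lambda}(w_1w_2)^{-2}\big)^{1/2}\|w_1f\|_{L^2}\|w_2g\|_{L^2}$, and the double integral is finite because the inner $\sigma_1$-integral $\int_{\mathbb R}\langle\sigma_1\rangle^{-1/2}\langle\sigma-\sigma_1-5\xi_1\xi_2\xi(\xi_1^2+\xi_1\xi_2+\xi_2^2)\rangle^{-1}\,d\sigma_1\lesssim1$ converges — this is precisely why $g$ has to carry the full exponent $1/2$ while $f$ needs only $1/4$ — and because $\int_{|\xi_1|\sim2^{-l}}|\xi_1|^{2a+2}\,d\xi_1\sim2^{-l(2a+3)}$ is summable over this range of $l$ since $a>-3/2$; a short computation then shows the surviving power of $2^{j_2}$ is controlled, the decisive input being $s\ge-2a-2$ from (\ref{co_op}).

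On the complementary part, where $2^{k_{\max}}\gtrsim|\xi_1|\,2^{4j_2}$ is large, I would also dyadically decompose the output and input modulations into $B_k,B_{k_1},B_{k_2}$ and split according to which of $k,k_1,k_2$ equals $k_{\max}$. In each case I apply the improved smoothing estimate (\ref{es_dy_2-2}) or (\ref{es_dy_3-2}) with $K_1$ or $K_2\sim2^{j_2}$, which supplies the crucial gain $2^{-3j_2/2}$; on the block $|\xi_1|\sim2^{-l}$ the weight $|\xi_1|^{-1/2}$ appearing in those lemmas is exchanged for the norm weight $|\xi_1|^{-a-1}$ at the cost $2^{l(-a-1/2)}$ (harmless for $a\ge-1/2$, and for $a<-1/2$ absorbed by the $2^{-3j_2/2}$ gain together with $2^{k_{\max}}\gtrsim|\xi_1|\,2^{4j_2}$); and the lower bound on $2^{k_{\max}}$ is used either to limit the remaining output modulation (forcing $2^k\lesssim|\xi_1|\,2^{4j_2}$ when $k_{\max}=k$) or to gain a factor $2^{-\varepsilon k_2}$ (when $k_{\max}=k_2$) that converts the $\ell^1_{k_2}$-structure of the $\hat{X}_{(2,1)}^{0,1/2}$-norm produced by the smoothing lemmas into the $\ell^2_{k_2}$-structure of $\hat{X}_{(2,2)}^{s,1/2}$. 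Summing the geometric series in $k,k_1,k_2$ and then in $l$ finishes the proof.

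The hard part is keeping track of the powers of $2^{j_2}$, aggravated by the fact that $f$ carries only the weak modulation weight $\langle\sigma_1\rangle^{1/4}$: one can never afford an $L^1_\tau$-bound on $f$, so all the $\tau$-integrability must be extracted from $g$'s $\langle\sigma_2\rangle^{1/2}$ (or from $\langle\sigma_1\rangle$ when the resonance makes it large), and the balance between $2^{-3j_2/2}$, $\langle\xi_2\rangle^{-s}$ and the $|\xi_1|$-weight conversion is tightest as $a\to-3/2$, where it is rescued only by $s\ge-2a-2$. A secondary point is keeping $C(j,j_1,j_2)\sim1$: the $\ell^1_{k_2}\to\ell^2_{k_2}$ passage in the cases $k_{\max}\in\{k,k_1\}$ a priori loses a power of $j_2$, which has to be absorbed into a strict exponential gain, so near the endpoint $s=-2a-2$ the summations in $k$ and $l$ must be carried out with genuine geometric decay.
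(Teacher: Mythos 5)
Your skeleton for case (vii) --- peel off a tiny-$|\xi_1|$ region treated directly, and treat the rest through the resonance identity and the smoothing estimates with separation $\sim 2^{j_2}$ --- is the same as the paper's, and your case (i) and your small-$|\xi_1|$ step are fine. The gap is in the main resonance region. You propose to localize the output to $B_k$ and, in \emph{every} sub-case of which of $k,k_1,k_2$ is maximal, to apply (\ref{es_dy_2-2}) or (\ref{es_dy_3-2}). Both of those estimates carry the prefactor $2^{k/2}$ in the \emph{output} modulation, and in (\ref{TE-L-1}) the output carries no modulation weight whatsoever, so this factor is a pure loss. In the sub-case where the output modulation dominates --- precisely the resonance-dominant interaction $2^{k}\sim|\xi_1|2^{4j_2}$ with $k_1,k_2$ small --- the only gains in your scheme are $K_1^{-3/2}\sim 2^{-3j_2/2}$, the weight conversions, and the pinning of $k$, and they do not suffice. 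Concretely take $(s,a)=(-1/4,-1/4)$, $f$ concentrated at $|\xi_1|\sim 1$, $\langle\sigma_1\rangle\sim 1$, $g$ at $|\xi_2|\sim 2^{j_2}$, $\langle\sigma_2\rangle\sim 1$, so that $2^{k}\sim 2^{4j_2}$: then (\ref{es_dy_2-2}) yields the prefactor $2^{k/2}K_1^{-3/2}\sim 2^{j_2/2}$, while replacing $\||\xi|^{-1/2}f\|_{L^2}$ and $\|g\|_{\hat X^{0,1/2}_{(2,1)}}$ by the norms on the right of (\ref{TE-L-1}) costs a further $2^{j_2/4}$ from $\langle\xi_2\rangle^{-s}$; the chain therefore produces a constant $\gtrsim 2^{3j_2/4}$, not $C(j,j_1,j_2)\sim 1$, and there is no remaining source of decay in your proposal. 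The same overshoot occurs (with different exponents) for every admissible $(s,a)$, and also if you swap the roles of $f$ and $g$ in the lemma, because $f$ only carries the exponent $1/4$.

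The missing idea is that in this sub-case one must not pay for the output modulation at all. Since both inputs live on single annuli ($f$ on $A_0$, $g$ on $A_{j_2}$), the correct tool is (\ref{es_dy_1-2}) with $K\sim 2^{j_2}$, whose gain $|\xi|^{1/2}K^{-3/2}\sim 2^{-2j_2}$ involves no factor $2^{k/2}$; the largeness of $2^{k_{\max}}\sim|\xi_1|2^{4j_2}$ is then spent by writing $2^{-7k_{\max}/16}\le\langle\tau_1-\xi_1^5\rangle^{-5/16}\langle\tau_2-\xi_2^5\rangle^{-1/8}$, i.e.\ by distributing a negative power of $k_{\max}$ onto the two \emph{input} modulation weights so that the resulting $\hat X^{0,1/2}_{(2,1)}$-norms are controlled (after Cauchy--Schwarz in $k_1,k_2$) by the available exponents $1/4$ on $f$ and $1/2$ on $g$; the inequality $|\xi_1|^{a+1}2^{-sj}\lesssim 2^{2j}2^{-7k_{\max}/16}$, which uses $s\ge -2a-2$ and $s\ge -1/4$, is what closes the balance. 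This is the paper's step (IIb-1). In the complementary region $2^{k_{\max}}\sim 2^{k_{\mathrm{med}}}\gg|\xi_1|2^{4j_2}$ the paper uses no smoothing lemma at all: a H\"older--Young bound $L^1_\xi L^{3/2}_\tau * L^2_\xi L^{6/5}_\tau\subset L^2_{\tau,\xi}$ together with $|\xi_1|^{a+1}2^{-sj}\lesssim 2^{k_{\mathrm{med}}/16}$ finishes it, so the $\ell^1_{k}$-to-$\ell^2_{k}$ conversions and logarithmic losses you worry about never arise; your blanket use of (\ref{es_dy_2-2})/(\ref{es_dy_3-2}) there again pays $2^{k/2}$ against a refund of only $2^{k_1/4}$ from $f$'s weight and is likewise not controlled.
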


\begin{lem} \label{lem_TE_3}
Let $s \geq -1/4$ and $-3/2<a \leq -1/4$. Suppose that $f$ is supported on 
$A_{0}$ and $g$ is supported on $A_{j_2}$ for $j_2 >0$. 
Then we have, for $j \geq 0$, 
\begin{align} \label{TE-L-2}
\| f*g \|_{L_{\tau,\xi}^2(A_j) } \lesssim C(j,j_1,j_2)
\| p_l f \|_{\hat{X}_{(2,2)}^{a,3/8} }
\| p_h g \|_{\hat{X}_{(2,2)}^{-1/2, 1/2-\varepsilon}},
\end{align}
in the cases (i) and (vii) of Proposition~\ref{prop_BE-2}.
\end{lem}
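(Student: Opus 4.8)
The plan is to prove this lemma by the method of the proof of Proposition~\ref{prop_BE-2}, specialised to $j_1=0$, the only value compatible with $f$ supported on $A_0$; in that situation only cases (i) and (vii) of Proposition~\ref{prop_BE-2} occur. First I would record the geometry: writing $\xi=\xi_1+\xi_2$ with $|\xi_1|\leq 1$ and $|\xi_2|\sim 2^{j_2}$ gives $|\xi|\sim 2^{j_2}$, hence $|j-j_2|\leq 2$, and the separations $|\xi_1-\xi_2|$, $|\xi+\xi_1|$, $|\xi+\xi_2|$ are all $\sim 2^{j_2}$; the usual resonance computation yields $2^{k_{\max}}:=\max\{2^k,2^{k_1},2^{k_2}\}\gtrsim|\xi\xi_1\xi_2(\xi^2+\xi_1^2+\xi_2^2)|\sim|\xi_1|\,2^{4j_2}$. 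In case (i), $j$ and $j_2$ are both comparable to a fixed constant, so all frequencies are $O(1)$ and (\ref{TE-L-2}) with $C(j,j_1,j_2)\sim 1$ follows immediately from H\"older's and Young's inequalities, as in part (I) of the proof of Proposition~\ref{prop_BE-2}: the weights $\langle\xi_1\rangle^a$, $\langle\xi_2\rangle^{\pm1/2}$ are bounded and the sub-unit $\tau$-decay carried by $g$ is irrelevant.

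For case (vii) ($j,j_2\geq 30$, $C(j,j_1,j_2)\sim 1$) I would split the $\xi_1$-region at $|\xi_1|=2^{-4j_2}$. On $\{|\xi_1|\leq 2^{-4j_2}\}$ the resonance is vacuous, so I would bound $\|f*g\|_{L^2(A_j)}$ by H\"older's and Young's inequalities directly, the small measure $|\{|\xi_1|\leq 2^{-4j_2}\}|\sim 2^{-4j_2}$ of the $\xi_1$-support providing a gain $2^{-2j_2}$ that more than recovers the $\langle\xi_2\rangle^{-1/2}$-loss in $\|p_h g\|_{\hat X_{(2,2)}^{-1/2,1/2-\varepsilon}}$ (i.e.\ the $2^{j_2/2}$ one must pay to pass from that norm to $\|g\|_{L^2}$), as in the treatment of the low-frequency ($\hat X_L$) components in the proof of Proposition~\ref{prop_BE-2}. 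On $\{2^{-4j_2}\leq|\xi_1|\leq 1\}$, I would decompose $f=\sum_{k_1}f_{k_1}$ and $g=\sum_{k_2}g_{k_2}$ into modulation blocks, use $2^k\lesssim 2^{k_1}+2^{k_2}+|\xi_1|2^{4j_2}$, and bound $\sum_{k_1,k_2}\|f_{k_1}*g_{k_2}\|_{L^2(A_j)}$ by the separated-support smoothing estimates (\ref{es_dy_2-2}), (\ref{es_dy_3-2}) with $K_1,K_2\sim 2^{j_2}$: the factor $K^{-3/2}\sim 2^{-3j_2/2}$ absorbs the $\langle\xi_2\rangle^{-1/2}$-loss, the lower bound $2^{k_{\max}}\gtrsim|\xi_1|2^{4j_2}$ together with the weights $\langle\tau_1-\xi_1^5\rangle^{3/8}$ on $f$ and $\langle\tau_2-\xi_2^5\rangle^{1/2-\varepsilon}$ on $g$ supplies the decay $2^{-\delta k_1}$, $2^{-\delta k_2}$ needed for the sums, and in the subranges where the output modulation dominates the constraint $k\lesssim\log_2(|\xi_1|2^{4j_2})$ makes the remaining $k$- and $\xi_1$-integrations converge. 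The hypothesis $a\leq -1/4$ is used here to place the singular weight $|\xi|^{-1/2}$ appearing in (\ref{es_dy_2-2})/(\ref{es_dy_3-2}) on the high frequency $\xi_2\sim 2^{j_2}$, where it is harmless.

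The step I expect to be the main obstacle is closing the modulation summation in case (vii) precisely when $|\xi_1|2^{4j_2}$ is small (so $|\xi_1|$ is near $2^{-4j_2}$): there the resonance gives no help, so the $\langle\xi_2\rangle^{-1/2}$-loss must be paid entirely by the geometric factor $K^{-3/2}\sim 2^{-3j_2/2}$, while the two smoothing estimates (\ref{es_dy_2-2}), (\ref{es_dy_3-2}) also demand that one factor be controlled in an $\hat X_{(2,2)}^{0,1/2}$-type norm (full weight $\langle\tau-\xi^5\rangle^{1/2}$ and an $\ell^1_k$-summation), whereas on $f$ only $\langle\tau_1-\xi_1^5\rangle^{3/8}$ and on $g$ only $\langle\tau_2-\xi_2^5\rangle^{1/2-\varepsilon}$ are available; that role can therefore only be assigned to an individual block $g_{k_2}$, at the price of a $2^{k_2/2}$ loss which has to be re-absorbed through the $2^{-k_2(1/2-\varepsilon)}$ decay of the weighted norm and through the resonance, and making this uniform in $j_2$ (and over the $\xi_1$-shells) is the delicate point. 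Once every subcase is settled, summing over the $O(1)$ values $j\sim j_2$ and using $L^2_\xi$-orthogonality in $g$ yields (\ref{TBE-3}), hence (\ref{TE-1}) in $\Omega_3$; the same scheme handles $\Omega_2$, $\Omega_4$, $\Omega_5$, and Lemma~\ref{lem_TE_2} is obtained identically and a little more easily, since there $g$ carries the full weight $\langle\tau_2-\xi_2^5\rangle^{1/2}$.
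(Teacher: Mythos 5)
Your overall architecture (only cases (i) and (vii) occur, resonance level $|\xi_1|2^{4j_2}$, separation $\sim 2^{j_2}$) matches the paper, and the easy pieces are fine: case (i) goes through by H\"older/Young exactly as in the paper (though your remark that the $\tau$-decay of $g$ is irrelevant is not quite right: since $f$ carries only the weight $3/8<1/2$, you must H\"older in $\tau$ on \emph{both} factors, e.g.\ $L^{3/2}_{\tau}*L^{6/5}_{\tau}\to L^2_{\tau}$), and your region $\{|\xi_1|\le 2^{-4j_2}\}$ argument works but is a split the paper does not even need. The core of case (vii), however, does not close as you set it up, for two concrete reasons. First, the constraint you invoke, $k\lesssim\log_2(|\xi_1|2^{4j_2})$ ``where the output modulation dominates'', holds only when $2^{k}\gg 2^{k_1}+2^{k_2}$; in the regime $2^{k}\sim 2^{k_1}$ (or $2^{k_2}$) with both far above $|\xi_1|2^{4j_2}$ there is no upper bound on $k$, and then the $2^{k/2}$ price per output block in \eqref{es_dy_2-2}/\eqref{es_dy_3-2} must be beaten by the input modulation weights, which give only $2^{-3k_1/8}$ from $f$ and $2^{-(1/2-\varepsilon)k_2}$ from $g$; since $3/8<1/2$ and $1/2-\varepsilon<1/2$, the $k$-sum diverges. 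This is exactly the ``delicate point'' you flag but do not resolve. The paper does not attempt a blockwise smoothing estimate there at all: it uses H\"older/Young after transferring a fractional weight via $2^{j/2}\sim(|\xi_1|2^{4j})^{1/8}|\xi_1|^{-1/8}\lesssim |\xi_1|^{-1/8}2^{k_1/8}$, so that only $1/8$ of the mid modulation is spent (leaving $3/8-1/8=1/4$, still enough for the $L^{3/2}_{\tau}$ H\"older step) and the loss $|\xi_1|^{-1/8}$ is absorbed by the $L^2_{\xi}$-integrability of $|\xi_1|^{-a-1/8}$ on $|\xi_1|\le 1$.

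Second, even in the resonance-dominated regime $2^{k_{\max}}\sim|\xi_1|2^{4j_2}$, the tools you chose, \eqref{es_dy_2-2} and \eqref{es_dy_3-2}, are quantitatively too weak: they put $|\xi|^{-1/2}$ on the \emph{other input} and cost $2^{k/2}$ on each output block, so after summing $2^{k/2}$ up to the resonance level (giving $\sim|\xi_1|^{1/2}2^{2j_2}$) and paying either $2^{j_2/2}$ for the $\langle\xi\rangle^{-1/2}$ in the norm of $g$ or $|\xi_1|^{-1/2-a}$ on $f$, the gain $K^{-3/2}\sim 2^{-3j_2/2}$ leaves you short by roughly $2^{j_2}$ when $|\xi_1|\sim 1$ and $a$ is close to $-1/4$. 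Concretely, with $f$, $g$ of modulation $O(1)$, $|\xi_1|\sim 1$, $|\xi_2|\sim N=2^{j_2}$, your route yields only $N^{1/2}\|f\|_{L^2}\|g\|_{L^2}$ where $N^{-1/2}\|f\|_{L^2}\|g\|_{L^2}$ is required (the method is lossy, not the lemma false). The estimate that closes this case is \eqref{es_dy_1-2}, which keeps \emph{both} inputs in $\hat{X}_{(2,1)}^{0,1/2}$, carries no $2^{k/2}$ loss, and supplies the extra output gain $|\xi|^{1/2}\sim 2^{j/2}$; the paper applies it after the pointwise bound $|\xi_1|^{-a}2^{j/2}\lesssim 2^{2j}2^{-3k_{\max}/8}$ (valid since $-a+3/8\ge 0$), splitting $3/8=1/4+1/8$ of the maximal modulation between the two factors, which is compatible with the available weights $3/8$ and $1/2-\varepsilon$. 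So both of your key steps need replacing: \eqref{es_dy_1-2} in the resonant regime, and the H\"older/Young weight-transfer argument in the non-resonant one.
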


Here we define $2^{k_{\max}} \geq 2^{k_{\text{med}}} \geq 2^{k_{\min}}$ to be 
the maximum, median and minimum of $2^{k}, 2^{k_1}, 2^{k_2}$ respectively. 

\begin{proof}[Proof of Lemma~\ref{lem_TE_2}]
(I) Estimate for (i). We use the H\"{o}lder inequality and the Young inequality  to have 
\begin{align*} 
\| f* g \|_{L_{\tau,\xi}^2} \lesssim \| f \|_{L_{\xi}^1 L_{\tau}^{3/2}}
\| g \|_{L_{\xi}^2 L_{\tau}^{6/5}}
\lesssim & \| |\xi_1|^{a+1} \|_{L_{\xi_1}^{2}(|\xi_1| \leq 1) } \| |\xi|^{-a-1} f \|_{L_{\xi}^2 L_{\tau}^{3/2}} \| g \|_{L_{\xi}^2 L_{\tau}^{6/5}} \\
\lesssim & \| f \|_{\hat{X}_L^{a,1/6+\varepsilon}} 
\| g \|_{\hat{X}_{(2,2)}^{0,1/3+\varepsilon}},
\end{align*}
which shows the required estimate.

\vspace{0.3em}

(II) Estimate for (vii).  We prove 
\begin{align} \label{TE-L-1_2}
\| p_h (f* g) \|_{L_{\tau,\xi}^2(A_j)} \lesssim \| p_l f \|_{\hat{X}^{-a-1,1/4}}\| p_h g \|_{\hat{X}_{(2,2)}^{s,1/2-\varepsilon}}.
\end{align}

(IIa) We consider (\ref{TE-L-1_2}) when $f$ is restricted to 
$\{ (\tau,\xi)~;~|\xi| \leq 2^{-2j_1} \}$. 
We use the H\"{o}lder inequality and the Young inequality to have 
\begin{align*}
\| f* g\|_{L_{\tau,\xi}^2(A_j)} \sim & 2^{-s j} \| f * (\langle \xi \rangle^{s} g) \|_{L_{\tau,\xi}^2} \\
\lesssim & 2^{-s j} \| f \|_{L_{\xi}^1 L_{\tau}^{3/2}} \| \langle \xi \rangle^s  g \|_{L_{\xi}^2 L_{\tau}^{6/5}} \\
\lesssim & 2^{-s j} \| |\xi_1|^{a+1} \|_{L_{\xi_1}^2 (|\xi_1| \leq 2^{-2j_1})} 
\| |\xi|^{-a-1} f \|_{L_{\xi}^2 L_{\tau}^{3/2}} 
\| g \|_{\hat{X}_{(2,2)}^{s,1/2}} \\
\lesssim & 2^{-(s+2a+3)j } \| f \|_{\hat{X}_L^{-a-1,1/4}} 
\| g \|_{\hat{X}_{(2,2)}^{s,1/2}}, 
\end{align*}
which implies the desired estimate. 

\vspace{0.3em}

(IIb) We prove (\ref{TE-L-1_2}) when $f$ is restricted to $\{(\tau,\xi)~;~ 
2^{-2j} \leq |\xi| \leq 1 \}$. 

\vspace{0.3em}

(IIb-1) We consider the case $2^{k_{\max}} \sim |\xi_1| 2^{4j}$. From $a+1 \geq -s/2$ and $s \geq -1/4$, we have 
\begin{align*}
|\xi_1|^{a+1} 2^{-sj} \lesssim (|\xi_1| 2^{2j})^{-s/2} 
\lesssim 2^{j/4} |\xi_1|^{1/8} \lesssim 
2^{2j} 2^{-7k_{\max}/16} |\xi_1|^{9/16}.
\end{align*}
Then we obtain 
\begin{align*}
\| f*g \|_{L_{\tau,\xi}^2 (A_j)} \lesssim & 2^{-sj} \| |\xi_1|^{a+1} 
(|\xi|^{-a-1} f)* (\langle \xi \rangle^s g) \|_{L_{\tau,\xi}^2} \\
\lesssim & 2^{2j} 2^{-7k_{\max}/16} \| (|\xi|^{-a-1} f)* (\langle \xi \rangle^s g) \|_{L_{\tau,\xi}^2} \\
\lesssim & 2^{2j} \|(|\xi|^{-a-1} \langle \tau \rangle^{-5/16} f)* 
(\langle \xi \rangle^s \langle \tau-\xi^5 \rangle^{-1/8} g) 
\|_{L_{\tau,\xi}^2},
\end{align*}
which shows the required estimate by using (\ref{es_dy_1-2}) with $K \sim 2^j$.

\vspace{0.3em}

(IIb-2) We consider in other cases, namely $2^{k_{\max}} \sim 2^{k_{\text{med}}} \gg |\xi_1| 2^{4j}$. We only prove (\ref{TE-L-1_2}) in the most difficult case $2^{k_{\max}} =2^{k}$ and $2^{k_{\text{med}}} =2^{k_1}$. 
Following 
\begin{align*}
|\xi_1|^{a+1} 2^{-sj} \sim (|\xi_1| 2^{2j})^{-s/2-1/8} |\xi_1|^{1/8} 2^{j/4} 
\lesssim (|\xi_1| 2^{4j})^{1/16} |\xi_1|^{1/8} \lesssim 2^{k_1/16},
\end{align*}
we use the H\"{o}lder inequality and the Young inequality to have
\begin{align*}
\| f*g \|_{L_{\tau,\xi}^2 (A_j)} \sim & 2^{-sj} \| |\xi_1|^{a+1} 
(|\xi|^{-a-1} f)* (\langle \xi \rangle^s g ) \|_{L_{\tau,\xi}^2} \\
\lesssim & \| (|\xi|^{-a-1} \langle \tau \rangle^{1/16} f) *
(\langle \xi \rangle^{s} g ) \|_{L_{\tau,\xi}^2} \\
\lesssim & \| |\xi|^{-a-1} \langle \tau \rangle^{1/16} f \|_{L_{\xi}^1 
L_{\tau}^{3/2}} \| \langle \xi \rangle^{s} g \|_{L_{\xi}^2 L_{\tau}^{6/5}} \\
\lesssim & \| f \|_{\hat{X}_L^{-a-1,1/4}} \| g \|_{\hat{X}_{(2,2)}^{s,1/2}}.
\end{align*}
\end{proof}

\begin{proof}[Proof of Lemma~\ref{lem_TE_3}] 
(I) Estimate for (i). We use the H\"{o}lder inequality and the Young inequality to have 
\begin{align*}
\|f*g \|_{L_{\tau,\xi}^2 } \lesssim \| f \|_{L_{\xi}^{1} L_{\tau}^{3/2}}
\| g \|_{L_{\xi}^2 L_{\tau}^{6/5}} 
\lesssim \| f \|_{\hat{X}_L^{0,1/6+\varepsilon}} 
\| g \|_{\hat{X}_{(2,2)}^{0,1/3+\varepsilon}},
\end{align*}
which shows the desired estimate.

\vspace{0.3em}

(II) Estimate for (vii). We prove 
\begin{align} \label{TE-L-2_1}
\| p_h(f *g) \|_{L_{\tau,\xi}^2(A_j)} \lesssim 
\| p_l f \|_{\hat{X}_L^{a,3/8}} 
\| p_h g \|_{\hat{X}_{(2,2)}^{-1/2,1/2-\varepsilon}}.
\end{align}

(IIa) We consider (\ref{TE-L-2_1}) when $2^{k_{\max}} \sim |\xi_1| 2^{4j}$. 
Following 
\begin{align*}
|\xi_1|^{-a} 2^{j/2} \sim 2^{2j} (|\xi_1| 2^{4j})^{-3/8} |\xi_1|^{-a+3/8}
\lesssim 2^{2j} 2^{-3k_{\max}/8},
\end{align*}
we use (\ref{es_dy_1-2}) with $K \sim 2^{j}$ to have 
\begin{align*}
\| f*g \|_{L_{\tau,\xi}^2 (A_j)} \sim & 2^{j/2} \| f *(\langle \xi \rangle^{-1/2} g) \|_{L_{\tau,\xi}^2} \\
\lesssim & 2^{2j} 2^{-3 k_{\max}/8} \| (|\xi|^a f) * (\langle \xi \rangle^{-1/2}g ) \|_{L_{\tau,\xi}^2} \\
\lesssim & 2^{2j} \| (|\xi|^{a} \langle \tau \rangle^{-1/4} f) * 
(\langle \xi \rangle^{-1/2} \langle \tau-\xi^5 \rangle^{-1/8} g) \|_{L_{\tau,\xi}^2} \\
\lesssim & \| f \|_{\hat{X}_L^{a,3/8}} \| g \|_{\hat{X}_{(2,2)}^{-1/2,1/2+\varepsilon}}.
\end{align*}

\vspace{0.3em}

(IIb) We prove (\ref{TE-L-2_1}) 
in the case $2^{k_{\max}} \sim 2^{k_{\text{med}}} 
\gg |\xi_1|2^{4j}$. It suffices to show (\ref{TE-L-2}) in the case 
$2^{k_{\max}} =2^{k} $ and $2^{k_{\text{med}}} =2^{k_1}$. 
Following 
\begin{align*}
2^{j/2} \sim (|\xi_1| 2^{4j})^{1/8} |\xi_1|^{-1/8} \lesssim |\xi_1|^{-1/8}
2^{k_1/8},
\end{align*}
we use the H\"{o}lder inequality and the Young inequality to have
\begin{align*}
\| f*g \|_{L_{\tau,\xi}^2 (A_j)} \lesssim & \| (|\xi|^{-1/8} \langle \tau \rangle^{1/8} f)*(\langle \xi \rangle^s g) \|_{L_{\tau,\xi}^2} \\
\lesssim & \| |\xi|^{-1/8} \langle \tau \rangle^{1/8} f 
\|_{L_{\xi}^1 L_{\tau}^{3/2}} \| \langle \xi \rangle^{-1/2} g \|_{L_{\xi}^2 L_{\tau}^{6/5}} \\
\lesssim & \| |\xi_1|^{-a-1/8} \|_{L_{\xi_1}^2 (|\xi_1| \leq 1)} 
\| |\xi|^a \langle \tau \rangle^{1/8} f \|_{L_{\xi}^2 L_{\tau}^{3/2}}
\| g \|_{\hat{X}_{(2,2)}^{-1/2, 1/2 -\varepsilon}} \\
\lesssim & \| f \|_{\hat{X}_L^{a,3/8}} 
\| g \|_{\hat{X}_{(2,2)}^{-1/2,1/2-\varepsilon}}.
\end{align*}
\end{proof}

\section{Proof of the main results}
In this section, we give the proof of the main theorems. The function space $Z_{T}^{s,a}$ is defined by the norm 
\begin{align*}
\|u \|_{Z_T^{s,a}}:=\inf \bigl\{ \| v \|_{Z^{s,a}}~;~u(t)=v(t)~\text{on}~
t \in [0,T] \bigr\}.
\end{align*}
We obtain the following well-posedness result.
\begin{prop} \label{prop_well}
Let $s,a$ satisfy (\ref{co_op}) and $r>1$. 

(Existence) For any $u_0 \in B_r(H^{s,a})$, there exist $T \sim r^{-10/(3+2a)}$ and \\
$u \in C([0,T];H^{s,a}) \cap Z^{s,a}_T$ satisfying the following integral form for (\ref{5KdV});
\begin{align} \label{integral-1}
u(t)= & U(t) u_0 -c_1 \int_{0}^t U(t-s) \p_x (u(s))^3 ds  
\nonumber \\ 
- & c_2 \int_{0}^t U(t-s) \p_x (\p_x u(s))^2 ds 
-c_3 \int_{0}^t U(t-s) \p_x (u \p_x^2 u(s)) ds
\end{align}
Moreover the data-to-solution map, 
$B_r(H^{s,a}) \ni u_0 \mapsto u \in C([0,T]; H^{s,a}) \cap Z^{s,a}_{T}$, is Lipschitz continuous. 

(Uniqueness) Assume that $u,v \in  C([0,T]; H^{s,a}) \cap Z^{s,a}_{T}$ satisfy (\ref{integral-1}). Then $u(t)=v(t)$ on $t \in [0,T]$. 
\end{prop}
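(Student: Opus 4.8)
The plan is a contraction-mapping argument in $Z^{s,a}$, combined with a scaling reduction to small data that produces the stated lifespan $T\sim r^{-10/(3+2a)}$. Fix the cutoff $\varphi$ of Section~2 and, for $u_0\in H^{s,a}$, set
\[
\Phi(u)(t):=\varphi(t)U(t)u_0-\varphi(t)\sum_{i=1}^{3}c_i\int_0^t U(t-s)N_i(u)(s)\,ds,
\]
with $N_1(u)=\p_x(u^3)$, $N_2(u)=\p_x(\p_x u)^2$, $N_3(u)=\p_x(u\p_x^2u)$. By Proposition~\ref{prop_linear1} the free term is bounded in $Z^{s,a}$ by $C\|u_0\|_{H^{s,a}}$, and by Proposition~\ref{prop_linear2} the $i$-th Duhamel term is bounded by $C\|\mathcal{F}_{\tau,\xi}^{-1}\langle\tau-\xi^5\rangle^{-1}\widehat{N_i(u)}\|_{Z^{s,a}}+C\|\langle\xi\rangle^{s-a}|\xi|^a\langle\tau-\xi^5\rangle^{-1}\widehat{N_i(u)}\|_{L_\xi^2L_\tau^1}$. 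On the Fourier side $\widehat{N_1(u)}$, $\widehat{N_2(u)}$, $\widehat{N_3(u)}$ equal (up to harmless constants) $\xi\,\widehat u*\widehat u*\widehat u$, $\xi(\xi\widehat u)*(\xi\widehat u)$ and $\xi(\xi^2\widehat u)*\widehat u$, so these three pieces are exactly the left-hand sides of (\ref{TE-1}), (\ref{BE-1}) and (\ref{BE-2}) of Proposition~\ref{prop_mult-ES} with $f=g=h=\widehat u$. Hence $\|\Phi(u)\|_{Z^{s,a}}\le C_0\|u_0\|_{H^{s,a}}+C_1(\|u\|_{Z^{s,a}}^2+\|u\|_{Z^{s,a}}^3)$, and, since $N_i(u)-N_i(v)$ is multilinear, $\|\Phi(u)-\Phi(v)\|_{Z^{s,a}}\le C_1(\|u\|_{Z^{s,a}}+\|v\|_{Z^{s,a}}+\|u\|_{Z^{s,a}}^2+\|v\|_{Z^{s,a}}^2)\|u-v\|_{Z^{s,a}}$. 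If $\|u_0\|_{H^{s,a}}\le\delta_0$ for a small absolute constant $\delta_0$, then $\Phi$ maps $B_R(Z^{s,a})$ with $R=2C_0\delta_0$ into itself and is a contraction there; its fixed point, restricted to an interval of length $\sim1$ on which $\varphi\equiv1$, satisfies (\ref{integral-1}) and lies in $C([0,1];H^{s,a})$ by the $L_t^\infty H_x^{s,a}$ bounds in Propositions~\ref{prop_linear1}--\ref{prop_linear2}. Lipschitz dependence on $u_0$ is immediate from the contraction estimate, as $\Phi$ depends on $u_0$ only linearly.

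For arbitrary $u_0\in B_r(H^{s,a})$ with $r>1$, I would use the scaling $u(t,x)\mapsto u_\lambda(t,x):=\lambda^2u(\lambda^5t,\lambda x)$, which leaves (\ref{5KdV}) invariant with the same $c_1,c_2,c_3$. A direct computation gives, for $0<\lambda\le1$,
\[
\|u_\lambda(0,\cdot)\|_{H^{s,a}}^2=\lambda^{3+2a}\int\langle\lambda\eta\rangle^{2(s-a)}|\eta|^{2a}|\widehat{u_0}(\eta)|^2\,d\eta\le\lambda^{3+2a}\|u_0\|_{H^{s,a}}^2,
\]
using $\lambda\le1$ together with $s\ge a$, which holds throughout the range (\ref{co_op}). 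Since $a>-3/2$ the exponent $3+2a$ is positive, so choosing $\lambda\sim r^{-2/(3+2a)}\,(\le1)$ makes $\|u_\lambda(0,\cdot)\|_{H^{s,a}}\le\delta_0$. Solving the rescaled problem on $[0,1]$ by the previous step and undoing the scaling yields a solution of (\ref{integral-1}) on $[0,T]$ with $T=\lambda^5\sim r^{-10/(3+2a)}$, belonging to $C([0,T];H^{s,a})\cap Z_T^{s,a}$ (finiteness of the $Z_T^{s,a}$-norm being inherited from the rescaled solution, scaling being bounded on $Z^{s,a}$ for each fixed $\lambda$), and depending Lipschitz-continuously on $u_0$. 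Continuity in time into $H^{s,a}$ follows from the $L_t^\infty H_x^{s,a}$ bounds together with the standard density argument for the Duhamel term.

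For uniqueness, if $u,v\in C([0,T];H^{s,a})\cap Z_T^{s,a}$ both satisfy (\ref{integral-1}), put $w=u-v$; on any subinterval $[0,T']\subset[0,T]$ the bilinear and trilinear estimates of Proposition~\ref{prop_mult-ES} give
\[
\|w\|_{Z_{[0,T']}^{s,a}}\lesssim\bigl(\|u\|_{Z_{[0,T']}^{s,a}}+\|v\|_{Z_{[0,T']}^{s,a}}+\|u\|_{Z_{[0,T']}^{s,a}}^2+\|v\|_{Z_{[0,T']}^{s,a}}^2\bigr)\|w\|_{Z_{[0,T']}^{s,a}}.
\]
Choosing $T'$ small enough that the parenthesis is $\le1/2$ forces $w\equiv0$ on $[0,T']$, and iterating this covers $[0,T]$.

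The hard harmonic analysis is already contained in Proposition~\ref{prop_mult-ES}, so the delicate points here are: (a) verifying that $\|u_\lambda(0,\cdot)\|_{H^{s,a}}$ genuinely gains the factor $\lambda^{(3+2a)/2}$ uniformly over (\ref{co_op}) --- where the inhomogeneous weight $\langle\xi\rangle^{s-a}$ and the restriction $a>-3/2$ enter --- together with the stability of the composite space $Z^{s,a}$ (with its $l_j^2l_k^1$ structure and its modified low-frequency components on $D_0,D_1,D_2$) under a fixed dyadic rescaling; and (b) justifying, for the uniqueness step, that $\|u\|_{Z_{[0,T']}^{s,a}}\to0$ as $T'\to0$ for $u\in C([0,T];H^{s,a})\cap Z_T^{s,a}$, since at this critical level no positive power of $T'$ is available from the nonlinear estimates themselves; this is the main obstacle I expect.
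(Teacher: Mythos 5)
Your existence argument coincides with the paper's: the same scaling reduction (the paper writes it as $u_\lambda(t,x)=\lambda^{-2}u(\lambda^{-5}t,\lambda^{-1}x)$, $\lambda\geq 1$, with the gain recorded in (\ref{scale}), which is your computation with $\lambda$ replaced by $\lambda^{-1}$), followed by the contraction argument built on Propositions~\ref{prop_mult-ES}, \ref{prop_linear1} and \ref{prop_linear2}; the lifespan $T\sim r^{-10/(3+2a)}$ and the Lipschitz dependence come out exactly as you describe, and the paper delegates the routine fixed-point details to \cite{KT}.

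The genuine gap is in the uniqueness step, and it is precisely the point you flag at the end. In your inequality for $w=u-v$ the prefactor is built from $\|u\|_{Z^{s,a}_{[0,T']}}$ and $\|v\|_{Z^{s,a}_{[0,T']}}$, and there is no mechanism to make it $\leq 1/2$ by shrinking $T'$: the space is of critical type ($b=1/2$ with the $l_j^2l_k^1$ structure), so restriction to a short interval yields no positive power of $T'$, and for the free part $U(t)u_0$ the restricted norm stays comparable to $\|u_0\|_{H^{s,a}}$ as $T'\to 0$, while uniqueness is claimed in the full class $C([0,T];H^{s,a})\cap Z_T^{s,a}$ with no smallness assumption. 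The paper closes this by the Muramatu--Taoka device: it introduces $W^{s,a}$ (norm $\|\cdot\|_{Z^{s,a}}+\|\cdot\|_{L_t^\infty H^{s,a}}$) and the auxiliary $\ell^1$-based space $X_{(1,1)}^{s,a,b}$ with $1/2<b<1$, establishes the vanishing property (\ref{MT_L}) for functions vanishing at $t=0$, and then uses the density of $\mathcal{Z}(\mathbb{R}^2)$ in $W^{s,a}$ to obtain (\ref{uni_W}), i.e. $\lim_{T\to 0}\|u\|_{W_T^{s,a}}=0$ whenever $u\in W^{s,a}$ and $u(0,x)=0$. Applied to the Duhamel parts of the two solutions (which vanish at $t=0$), and combined with the smallness of the free part after the scaling reduction, this is exactly what makes the prefactor in your short-interval contraction small, after which one iterates as in Kishimoto \cite{Ki09}. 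Without a substitute for (\ref{MT_L})--(\ref{uni_W}), your uniqueness proof does not close; the missing ingredient is this vanishing-of-the-restriction-norm lemma, not any of the multilinear estimates.
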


\begin{proof}
We first prove the existence of the solution of (\ref{integral-1}). 
This equation is the scale invariant with respect to 
the following scaling.
\begin{align*}
u(t,x) \mapsto u_{\lambda}(t,x):=\lambda^{-2} u(\lambda^{-5}t,\lambda^{-1}x), 
\hspace{0.3cm} \lambda \geq 1.
\end{align*}
A direct calculation shows 
\begin{align} \label{scale}
\| u_{\lambda}(0, \cdot) \|_{H^{s,a}} 
\leq \lambda^{-3/2-a} \| u_0 \|_{H^{s,a}}.
\end{align}
Therefore we can assume that initial data is small enough. 
Here we use propositions~\ref{prop_mult-ES},~\ref{prop_linear1} and \ref{prop_linear2} to construct the solution by the fixed point argument. For details, see 
the proof of Proposition $4.1$ in \cite{KT}.

We next prove the uniqueness of solutions by the argument in \cite{MT}. We define the space $W^{s,a}$ with the norm 
\begin{align*}
\| u \|_{W^{s,a}}:=\| u \|_{Z^{s,a}}+ \| u \|_
{L_t^{\infty} (\mathbb{R}; H^{s,a})}.
\end{align*}
In the same manner as the proof of Theorem 2.5 in \cite{MT}, we obtain, for 
$1/2 < b <1$, 
\begin{align} \label{MT_L}
w \in X_{(1,1),T_{\lambda}}^{s,a,b}, \hspace{0.3cm} w(0,x)=0 
\Rightarrow \lim_{\delta \rightarrow +0} 
\| w |_{[0,\delta]} \|_{X_{(1,1),\delta}^{s,a,b}}=0,
\end{align}
where $T_{\lambda}:=\lambda^{5} T$, $\lambda \geq 1$ and the function space
 $X_{(1,1)}^{s,a,b}$ defined by the norm
\begin{align*}
\| u \|_{X_{(1,1)}^{s,a,b}}:=
\bigl\| \bigl\{  \| \langle \xi \rangle^{s-a} |\xi|^a 
\langle \tau-\xi^5 \rangle^b \widehat{u}  \|_{L_{\tau,\xi}^2 (A_j \cap B_k) }  
\bigr\}_{j,k \geq 0} \bigr\|_{l_{j,k}^1}.
\end{align*}
Let $u \in W^{s,a}$ and $u(0,x)=0$. 
Since $W^{s,a}$ contains $\mathcal{Z}(\mathbb{R}^2)$ densely, 
We can choose $v \in \mathcal{Z} $ satisfying 
$\| u-v  \|_{W^{s,a}} < \varepsilon$ where $\varepsilon$ is an arbitrary positive number. Now we have 
\begin{align*}
\|v(0) \|_{H^{s,a}} =\| (u-v) (0)  \|_{H^{s,a}} \lesssim \| u-v \|_{W^{s,a}}
< \varepsilon.
\end{align*}
Note that
\begin{align} 
\sup_{t \in \mathbb{R}} \| u(t) \|_{H^{s,a}} \lesssim \| u \|_{W^{s,a}}
 \lesssim \| u \|_{X^{s,a,b}}
\end{align}
for any $3/4 < b < 1$. From the above argument, we obtain 
\begin{align*}
\| u \|_{W_T^{s,a}} \lesssim & \| u-v \|_{W^{s,a}}+
 \| v-U(t)v(0) \|_{W_T^{s,a}}+\| U(t) v(0) \|_{X^{s,a,b}} \\
\lesssim & \varepsilon +\| v-U(t) v(0) \|_{X_{(1,1),T}^{s,a,b}}+\| v(0) \|_{H^{s,a}} \\
\lesssim & \varepsilon +\|v-U(t) v(0) \|_{X_{(1,1), T}^{s,a,b}}.
\end{align*}
The second term tends to $0$ as $T \rightarrow 0$ from (\ref{MT_L}), which shows that 
\begin{align} \label{uni_W}
\lim_{T \rightarrow 0} \| u \|_{W_T^{s,a}}=0.
\end{align}
Combining Propositions~\ref{prop_mult-ES},~\ref{prop_linear1} and \ref{prop_linear2} and (\ref{uni_W}), we have the uniqueness. For the details, see \cite{Ki09}.
\end{proof}

We next prove a priori estimate (\ref{apr-es}). The proof is based on Tsugawa's work \cite{Ts}. 
\begin{proof}[Proof of Proposition~\ref{prop_apr}]
By the density argument, without loss of generality, we can assume $u \in \mathcal{Z}$. 
We put the Fourier multiplier $P$ defined by 
\begin{align*}
Pu:=\mathcal{F}_{\xi}^{-1} |\xi|^a~ \chi_{\{|\xi| \leq 1 \}} (\xi)  \mathcal{F}_{x} u.
\end{align*}
Calculating 
\begin{align*}
\int P \bigl(\p_t u-\p_x^5 u -\frac{2}{5} \alpha^2 \p_x (u)^3 + \alpha \p_x (\p_x u)^2 + 2\alpha \p_x ( u \p_x^2 u) \bigr) \cdot Pu dx=0,
\end{align*}
we have
\begin{align*}
\int \p_t Pu \cdot Pu dx - \int P\p_x^5 u \cdot Pu dx - \frac{2}{5} \alpha^2 
\int P \p_x(u)^3 \cdot Pu dx \\
 -\alpha \int P \p_x(\p_x u)^2 \cdot Pu dx+
\alpha \int P \p_x^3 (u)^2 \cdot Pu dx =0.
\end{align*}
The second term of the right hand side vanishes. 
We note
\begin{align} \label{P-pro}
\widehat{P \p_x} \leq |\xi|^{a+1} |_{|\xi| \leq 1} \leq 1.
\end{align}
for $a \geq -1$. By the Sobolev inequality and (\ref{P-pro}), the third term is bounded by 
\begin{align*}
\| u^2 \|_{L^1} \| u \|_{L^{\infty}} \| P^2 \p_x u  \|_{L^{\infty}}
\lesssim & \| u \|_{L^2}^{5/2} \| \p_x u \|_{L^2}^{1/2} 
\|P^2 \p_x u  \|_{L^2}^{1/2} \| (P \p_x)^2 u \|_{L^2}^{1/2} \\
\lesssim & \| u \|_{L^2}^{3} \| \p_x u \|_{L^2}^{1/2} \|Pu  \|_{L^2}^{1/2}.
\end{align*}
Similarly, the fourth term is bounded by 
\begin{align*}
\| (\p_x u)^2 \|_{L^1} \| P^2 \p_x u \|_{L^{\infty}}
\lesssim \| \p_x u \|_{L^2}^2 \| u \|_{L^2}^{1/2} \| Pu \|_{L^2}^{1/2},
\end{align*}
and the fifth term is bounded by 
\begin{align*}
\| u^2 \|_{L^1} \| P^2 \p_x^3 u \|_{L^{\infty}} \lesssim 
\| u^2 \|_{L^2}^{5/2} \| Pu \|_{L^2}^{1/2}.
\end{align*}
Following the above estimates, we obtain 
\begin{align*}
\p_t \| Pu \|_{L^2}^{3/2} \lesssim \| u \|_{L^2}^3 \| \p_x u \|_{L^2}^{1/2}
+\| u \|_{L^2}^{1/2} \|\p_x u \|_{L^2}^2 + \| u \|_{L^2}^{5/2}.
\end{align*}
Therefore we have
\begin{align} \label{Pu_es}
\sup_{0 \leq t \leq T} 
\| Pu (t,\cdot) \|_{L^2}^{3/2} \leq \|P u_0 \|_{L^2}^{3/2}+CT
\bigl(\| u \|_{L^2}^{3} \| \p_x u \|_{L^2}^{1/2} +\| u \|_{L^2}^{1/2} 
\| \p_x u \|_{L^2}^2 + \| u \|_{L^2}^{5/2} \bigr).
\end{align}
(\ref{5KdV}) is complete integrable in the case $c_1=-2\alpha^2/5$, $c_2=\alpha$ and $c_3=2\alpha$ with $\alpha \in \mathbb{R} \setminus \{0 \}$. So this equation particularly has the conserved quantities as follows:
\begin{align} \label{L^2_law}
\| u (t,\cdot) \|_{L^2} & = \| u_0 \|_{L^2}, \\
\label{H^1_law}
\int (\p_x u)^2+ \frac{2}{5} \alpha u^3 dx &= \int (\p_x u_0)^2+
\frac{2}{5} \alpha u_0^3 dx.
\end{align}
Using the Sobolev inequality and (\ref{L^2_law}) to (\ref{H^1_law}), we have 
\begin{align} \label{H^1_es}
\| \p_x u (t, \cdot) \|_{L^2} \lesssim \| \p_x u_0 \|_{L^2}^2+ 
\| u_0 \|_{L^2}^{10/3}.
\end{align}
Substituting (\ref{L^2_law}) and (\ref{H^1_es}) into (\ref{Pu_es}), 
we have
\begin{align} \label{low_es}
\sup_{0 \leq t \leq T} \|Pu (t,\cdot)  \|_{L^2}^{3/2} \leq 
\|Pu_0 \|_{L^2}^{3/2}+ CT \bigr( \| u_0 \|_{L^2}^{15/4}+ \| u_0 \|_{H^1}^{5/2} \bigr). 
\end{align}
Since 
\begin{align*}
\| u (t,\cdot) \|_{H^{1.a}}^2 \leq \| Pu (t,\cdot) \|_{L^2}^2+ \|u (t,\cdot)\|_{L^2}^2+  \| \p_x u (t,\cdot) \|_{L^2}^2,
\end{align*}
we obtain (\ref{apr-es}) from (\ref{L^2_law}), (\ref{H^1_es}) and (\ref{low_es}). 
\end{proof}

Finally, we prove Theorem~\ref{thm_ill}. 
We first prove (i) in Theorem~\ref{thm_ill}. 
In \cite{BT}, Bejenaru and Tao, for the quadratic Schr\"{o}dinger equation with nonlinear term $u^2$, proved the discontinuity of the data-to-solution map for any $s<-1$. We essentially follow their argument to obtain the following proposition. 
\begin{prop} \label{prop_ill} 
Let $s <s_a:=-2a-2$, $-3/2<a < -7/8$, $c_2 \neq c_3$ and $0 <\delta \ll 1$. Then there exist $T=T(\delta)>0$ and 
a sequence of initial data 
$\{ \phi_{N,\delta}  \}_{N=1}^{\infty} \in H^{\infty}$ satisfying the following three conditions for any $t \in (0,T]$, 
 
 (1) $ \| \phi_{N, \delta} \|_{H^{s_a,a}} \sim \delta $, 

 (2) $\| \phi_{N, \delta} \|_{H^{s,a}} \rightarrow 0$ as $N \rightarrow \infty$, 

 (3) $\| u_{N,\delta} (t) \|_{H^{s,a}} \gtrsim \delta^2$, \\
where $u_{N, \delta}(t)$ is the solution to (\ref{5KdV}) 
 obtained in Proposition~\ref{prop_well} with the initial data 
$\phi_{N,\delta}$.
\end{prop}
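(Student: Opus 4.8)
The plan is to adapt the scheme of Bejenaru--Tao \cite{BT} (already invoked in the introduction): I would produce a sequence $\{\phi_{N,\delta}\}$ built from two narrow frequency bumps sitting at high frequencies, with $\widehat{\phi_{N,\delta}}(\xi)=c\,\delta\,N^{-s_a}w^{-1/2}\bigl(\chi_{I_1}(\xi)+\chi_{I_2}(\xi)\bigr)$, where $I_1,I_2$ are intervals of length $w=w(N)$ centered at $N$ and $-N+\mu_0$ respectively, and the offset $\mu_0=\mu_0(N)\sim w$ is chosen so that the convolution $\widehat{\phi_{N,\delta}}\ast\widehat{\phi_{N,\delta}}$ is supported in $\{|\xi|\sim\mu_0\}\subset\{|\xi|\le1\}$ and avoids $\xi=0$. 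A direct computation of Sobolev norms then gives $\|\phi_{N,\delta}\|_{H^{s_a,a}}\sim\delta$ and $\|\phi_{N,\delta}\|_{H^{s,a}}\sim\delta\,N^{s-s_a}\to0$ as $N\to\infty$, which are conditions (1) and (2); the prefactor $c\,\delta\,N^{-s_a}w^{-1/2}$ is dictated exactly by requiring $\|\phi_{N,\delta}\|_{H^{s_a,a}}\sim\delta$. Since $\delta\ll1<r$, we have $\phi_{N,\delta}\in B_r(H^{s_a,a})$, so Proposition~\ref{prop_well} supplies the solution $u_{N,\delta}$ on some $[0,T]$, $T=T(\delta)>0$, with $\|u_{N,\delta}\|_{Z^{s_a,a}_T}\lesssim\delta$.

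Next I would use the Picard expansion of \eqref{integral-1}, $u_{N,\delta}=\sum_{k\ge1}A_k$, with $A_1(t)=U(t)\phi_{N,\delta}$, $A_2$ the quadratic iterate \eqref{qu_def} with $u_1(t)=U(t)\phi_{N,\delta}$, and $\sum_{k\ge3}A_k$ the remaining iterates. Applying Proposition~\ref{prop_mult-ES} exactly as in the fixed-point argument behind Proposition~\ref{prop_well} gives $\|A_k\|_{Z^{s_a,a}_T}\lesssim(C\delta)^k$ with $C$ independent of $N$, hence $\bigl\|\sum_{k\ge3}A_k\bigr\|_{L_t^\infty H^{s,a}}\le\bigl\|\sum_{k\ge3}A_k\bigr\|_{L_t^\infty H^{s_a,a}}\lesssim\delta^3$, where the first inequality uses that $H^{s,a}$ is a weaker norm than $H^{s_a,a}$ when $s<s_a$. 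Also $\|A_1(t)\|_{H^{s,a}}=\|\phi_{N,\delta}\|_{H^{s,a}}\to0$. Therefore condition (3) reduces to the single lower bound
\begin{align*}
\|A_2(t)\|_{H^{s,a}}\gtrsim\delta^2,\qquad t\in(0,T],
\end{align*}
for $N$ large, after which (3) follows from $\|u_{N,\delta}(t)\|_{H^{s,a}}\ge\|A_2(t)\|_{H^{s,a}}-\|A_1(t)\|_{H^{s,a}}-\bigl\|\sum_{k\ge3}A_k(t)\bigr\|_{H^{s,a}}$.

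To establish the lower bound on $A_2$ I would compute it in Fourier variables. The symmetrized multiplier of $-c_2\p_x(\p_x u)^2-c_3\p_x(u\p_x^2u)$ is $m(\xi,\xi_1,\xi_2)=i\xi\bigl(c_2\xi_1\xi_2+\tfrac{c_3}{2}(\xi_1^2+\xi_2^2)\bigr)=\tfrac{ic_3}{2}\xi^3+i(c_2-c_3)\xi\xi_1\xi_2$; on the interaction support ($\xi_1\approx N$, $\xi_2\approx-N$, $\xi$ small) the first term is negligible and $m\approx-i(c_2-c_3)N^2\xi$, which is where the hypothesis $c_2\neq c_3$ enters. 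The Duhamel factor equals $\bigl(e^{it(\xi_1^5+\xi_2^5)}-e^{it\xi^5}\bigr)/(i\Phi)$ with resonance $\Phi=\xi_1^5+\xi_2^5-\xi^5\approx5N^4\xi$ on the support, so with $w\sim N^{-4}$ the phase $\Phi$ is essentially constant in $\xi_1$ over each bump and $|\Phi|$ stays bounded; since $\widehat{\phi_{N,\delta}}$ is nonnegative there is no cancellation in the $\xi_1$-integral, giving $|\widehat{A_2}(\xi,t)|\gtrsim|c_2-c_3|\,N^2|\xi|\,A^2w\,|\sin(t\Phi(\xi)/2)|/|\Phi(\xi)|$ on $\{|\xi|\sim\mu_0\}$, where $A=c\,\delta\,N^{-s_a}w^{-1/2}$. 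Integrating $|\widehat{A_2}(\xi,t)|^2$ against $|\xi|^{2a}$ over $\{|\xi|\sim\mu_0\}$ (a convergent integral, using $a>-3/2$) and inserting $A$ with $\mu_0\sim w\sim N^{-4}$, all powers of $N$ cancel precisely because $s_a=-2a-2$, leaving $\|A_2(t)\|_{H^{s,a}}\gtrsim|c_2-c_3|\,\delta^2$ for $t$ in the relevant range of $(0,T]$; together with the previous paragraph this yields (3).

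The step I expect to be the main obstacle is this explicit lower bound on $A_2$: it is a quantitative form of the failure of the bilinear estimate \eqref{BE-3} at $s=s_a$ recorded in Examples~1--2 of the Appendix, and making it precise requires simultaneously tuning the width $w$, the offset $\mu_0$, the frequency $N$ and the lifespan $T$ so that (i) the resonance $\Phi\approx5N^4\xi$ is controlled on the frequency support, (ii) there is no phase cancellation in the convolution defining $\widehat{A_2}$, (iii) the oscillatory Duhamel factor $\bigl(e^{it(\xi_1^5+\xi_2^5)}-e^{it\xi^5}\bigr)/(i\Phi)$ is bounded below for the relevant range of $t\in(0,T]$ (a delicate point, since $A_2(0)=0$ forces it to degenerate as $t\to0$), and (iv) the low-frequency weight $|\xi|^a$ exactly compensates the high-frequency smallness of the data --- the last being the arithmetic identity forced by $s=s_a=-2a-2$ and by the integrability condition $a>-3/2$. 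Once this estimate is in hand, the verification of (1) and (2) and the control of the higher Picard iterates are routine, and the discontinuity claim in Theorem~\ref{thm_ill}(i) follows because $\phi_{N,\delta}\to0$ in $H^{s,a}$ while the corresponding solutions stay at distance $\gtrsim\delta^2$ from $0$ in $H^{s,a}$.
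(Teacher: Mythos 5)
Your proposal follows essentially the same route as the paper: $H^{s_a,a}$-normalized two-bump data at frequencies $\approx \pm N$ of width $\sim N^{-4}$ and amplitude $\sim \delta N^{2a+4}$, a lower bound $\gtrsim \delta^2$ on the quadratic Picard iterate coming from the non-resonant $(c_2-c_3)\,\xi\xi_1\xi_2$ part of the high$\times$high$\to$low interaction (with resonance $\approx 5N^4\xi$ bounded on the support and $a>-3/2$ giving integrability of $|\xi|^{2a+2}$ near the origin), and control of everything beyond second order by $O(\delta^3)$ in $H^{s_a,a}$ via the local theory at $s=s_a$, exactly as in the paper's argument. The only cosmetic deviations (the small offset $\mu_0$ of one bump and summing the full Picard series instead of directly estimating $u-U(t)\phi_{N,\delta}-A_2(\phi_{N,\delta})$) do not change the substance of the proof.
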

\begin{proof} 
Let $N \gg 1$. 
We put the initial data $ \phi_{N,\delta}$ as follows:
\begin{align*}
\phi_{N, \delta}(x)= \delta N^{2a+4} \cos(N x) \int_{-\gamma}^{\gamma} 
e^{i \xi x} d\xi.
\end{align*}
where $\gamma:=N^{-4}$. 
A simple calculation shows that
\begin{align} \label{initial-Fo}
\widehat{\phi}_{N, \delta}(\xi) \sim \delta N^{2a+4} \chi_{B^{+}}(\xi)
+\delta N^{2a+4} \chi_{B^{-}}(\xi),
\end{align}
where
\begin{align*}
B^{\pm}:=[\pm N-\gamma,~ \pm N+\gamma].
\end{align*}
Thus we have 
\begin{align} \label{initial-norm}
\| \phi_{N,\delta}  \|_{H^{s,a}} \sim \delta N^{s+2a+2}, \hspace{0.3cm}
\| U(t) \phi_{N, \delta} \|_{H^{s,a}}=\| \phi_{N,\delta} \|_{H^s} \sim \delta N^{s+2a+2 } . 
\end{align}
Since $\| \phi_{N,\delta} \|_{H^{s_a,a}} \sim \delta$, we have 
$T=T(\delta)>0$ and the solution $u_{N, \delta}$ to (\ref{5KdV}) with the initial data 
$\phi_{N,\delta}$ by Proposition~\ref{prop_well}. 
Let $t \in (0, T]$. 
A direct calculation shows that 
\begin{align} \label{qua-Fo}
 \widehat{A_2}(u_0)( t)= & (c_2-c_3) \exp (i \xi^5 t) \int 
\frac{1- \exp(-iq_1t) }{q_1} 
~\xi \xi_1 (\xi-\xi_1)
~\widehat{u}_0(\xi_1) \widehat{u}_0 (\xi-\xi_1) d\xi_1 \nonumber \\
+ & \frac{c_3}{2} \exp (i \xi^5 t) \int 
\frac{1- \exp(-iq_1t) }{q_1} 
~\xi^3
\widehat{u}_0(\xi_1) \widehat{u}_0 (\xi-\xi_1) d\xi_1 \nonumber \\
:= & \widehat{A}_{2,1}(u_0) (t) + \widehat{A}_{2,2}(u_0) (t),
\end{align}
where
\begin{align*}
 q_1:=\frac{5}{2} \xi \xi_1(\xi-\xi_1) \bigl\{ \xi^2 +\xi_1^2 +(\xi-\xi_1)^2
\bigr\}.  
\end{align*}
By similarly argument to the proof of Theorem $1.2$ in \cite{KT}, 
substituting (\ref{initial-Fo}) into 
(\ref{qua-Fo}), we obtain for $c_2 \neq c_3$
\begin{align} \label{qua-norm}
\| A_2(\phi_{N, \delta}) (t)  \|_{H^{s,a}} \gtrsim \delta^2.
\end{align}
Now we put $v_{N,\delta}(t):= u_{ N,\delta}(t) -U(t)\phi_{N,\delta}-
A_2(\phi_{N ,\delta})(t)$. 
Since the data-to-solution map is Lipschitz continuous with $s=s_a$, 
we obtain 
\begin{align} \label{er-norm}
\|v_{N,\delta} (t)  \|_{H^{s_a,a}} \lesssim \delta^3
\end{align}
by Propositions~\ref{prop_mult-ES}, \ref{prop_linear1} and \ref{prop_linear2}. 
From (\ref{initial-norm}), (\ref{qua-norm}) and (\ref{er-norm}), we obtain 
\begin{align*} 
\| u_{ N,\delta}(t) \|_{H^{s,a}} \geq 
\| A_2 (\phi_{N ,\delta})(t) \|_{H^{s,a}}- 
\|v_{N,\delta}(t) \|_{H^{s,a} } - 
\| U(t) \phi_{N, \delta} \|_{H^{s,a}} \gtrsim \delta^2,
\end{align*}
for all $N \gg 1$. Since $\| \phi_{N,\delta} \|_{H^{s,a}} \rightarrow 0$ as $N \rightarrow \infty$, this shows the discontinuity of the flow map.
\end{proof}

Secondly, we prove Theorem~\ref{thm_ill} (ii). By the general argument in \cite{Ho}, it suffices to show the following estimate fails for $|t|$ bounded.
\begin{align*} 
\| A_2(u_0) (t)  \|_{H^{s,a}}^2 \lesssim \| u_0 \|_{H^{s,a}}^2.
\end{align*}
We put the initial data $\{ \psi_N \}_{N=1}^{\infty} \in H^{\infty}$ 
as follows:
\begin{align*} 
\psi_N(x):=N^{-s+2} \cos(Nx) \int_{-\gamma}^{\gamma} e^{i \xi x} d\xi
+N^{4a+2} \cos(N^{-4} x) \int_{-\gamma/2}^{\gamma/2} e^{i \xi x} d\xi.
\end{align*}
A direct computation shows that 
\begin{align} \label{initial_2}
\widehat{\psi_N}(\xi)=N^{-s+2} \bigl( \chi_{B^{+}}(\xi)+ \chi_{B^{-}} (\xi) 
\bigr)
+N^{4a+2} \chi_{[\gamma/2,3\gamma/2]} (\xi).
\end{align}
Clearly $\|  \psi_N \|_{H^{s,a}} \sim 1$. Note $c_2 \neq c_3$ and $c_3 \neq 0$. Inserting (\ref{initial_2}) into (\ref{qua-Fo}), we have 
\begin{align*}
|\widehat{A}_{2,1}(\psi_N) (t)| \gtrsim 
N^{-2s+2} |\xi| \chi_{[0,\gamma]} (\xi) + \text{(remainder terms)},
\end{align*}
and 
\begin{align*}
|\widehat{A}_{2,2}(\psi_N ) (t) | \gtrsim N^{-s+4a+2} 
|\xi | \chi_{[N,N+\gamma]} (\xi) 
+ \text{(remainder terms)}.
\end{align*}
Therefore we obtain 
\begin{align} \label{qua-es_2}
\| A_2 (\psi_N) (t) \|_{H^{s,a}} \gtrsim N^{-2s+2} 
\Bigl( \int_{0}^{\gamma} |\xi|^{2a+2} d\xi \Bigr)^{1/2}
+ N^{-s+4a+2} 
\Bigl( \int_{N}^{N+\gamma} |\xi|^{2s+2} d\xi \Bigr)^{1/2}.
\end{align}
If $a \leq -3/2$, the first term of the right hand side of (\ref{qua-es_2}) diverges. When we assume $a>-3/2$, $\| A_2(\psi_N) (t)  \|_{H^{s,a}}$ is greater than $C (N^{-2(s+2a+2)}+ N^{4(a+1/4)})$. If $s< -2a-2$ or $a>-1/4$, 
$\| A_2 (\psi_N) (t)  \|_{H^{s,a}} \rightarrow \infty$ as $N \rightarrow \infty$, which implies the claim since $\| \psi_N \|_{H^{s,a}} \sim 1$.

\vspace{0.5em}

Finally, we prove Theorem~\ref{thm_ill} (iii). Similar to above, we seek for the initial data such that, for $|t|$ bounded,
\begin{align} \label{cub-es}
\| A_3(\phi_N) (t) \|_{H^{s,a}} \lesssim \| u_0 \|_{H^{s,a}}^3
\end{align}
fails. By using the similar argument to \cite{Bo97}, we prove that (\ref{cub-es}) fails for $s<-1/4$. $A_3(u_0)$ is the cubic term of the Taylor expansion of the flow map as follows: 
\begin{align*}
A_3(u_0)(t)= A_{3,1}(u_0)(t)+ A_{3,2}(u_0)(t)+ \text{(remainder terms)},
\end{align*}
where 
\begin{align*}
A_{3,1}(u_0)(t):= -c_1 \int_0^t U(t-s) \p_x (u_1(s))^3 ds,
\end{align*}
and 
\begin{align*}
A_{3,2}(u_0)(t):= -c_3 \int_0^t U(t-s) \p_x^3 (u_1(s) A_2(u_0)(s) ) ds.
\end{align*}
We put the initial data 
$\{ \phi_N \}_{N=1}^{\infty} \in H^{\infty}$ as follows:
\begin{align*}
\phi_N (x):= N^{-s+3/4} \cos(Nx) \int_{-N^{-3/2}}^{N^{-3/2}} e^{i \xi x} d\xi.
\end{align*} 
A simple calculation shows that
\begin{align} \label{initial_3}
\widehat{\phi_N}(\xi)=N^{-s+3/4} \bigl(\chi_{C^{+}}(\xi)
+\chi_{C^{-}} (\xi)  \bigr),
\end{align}
where $C^{\pm}:=[\pm N-N^{-3/2}, \pm N+N^{-3/2}]$. 
Clearly 
$\| \phi_N \|_{H^{s,a}} \sim 1$. 
A straightforward computation shows that
\begin{align} \label{cu_3-1}
A_{3,1}(u_0)(t) &=-c_1 \int \exp(i (\xi_1+ \xi_2+\xi_3)x+ i(\xi_1+\xi_2+\xi_3)^5 t) \nonumber \\
& \hspace{0.6cm} \times \frac{1-\exp(-i q_2 t)}{q_2}  (\xi_1+\xi_2+\xi_3) 
\widehat{u_0}(\xi_1) \widehat{u_0}(\xi_2) \widehat{u_0}(\xi_3) d\xi_1
d\xi_2 d\xi_3,
\end{align}
where 
\begin{align*}
q_2:= \frac{5}{2} (\xi_1+\xi_2) (\xi_2+\xi_3) (\xi_3+\xi_1) 
\bigl\{ (\xi_1+ \xi_2)^2 +(\xi_2+\xi_3)^2+ (\xi_3+\xi_1)^2 \bigr\}.
\end{align*}
Next we calculate $A_{3,2}(u_0)$. From the definition of the quadratic term $A_2$, 
\begin{align} \label{A_2-Fo}
&\widehat{A}_2(u_0)(t) =  \frac{2}{5}(c_3-c_2) \int \frac{\exp(i \xi_1^5 t+ i(\xi-\xi_1)^5 t )}{\xi^2 +\xi_1^2+(\xi-\xi_1)^2} \widehat{u}_0(\xi_1) \widehat{u}_0 (\xi-\xi_1) d\xi_1 \nonumber \\
& ~~- \frac{2}{5} (c_3-c_2)  \int \frac{\exp(i \xi^5 t)}{\xi^2+\xi_1^2 +(\xi-\xi_1)^2} \widehat{u}_0(\xi_1) \widehat{u}_0 (\xi-\xi_1) d\xi_1
+ \text{(remainder terms)}. 
\end{align}
Substituting (\ref{A_2-Fo}) into $A_{3,2}(u_0)$, we have
\begin{align} \label{cu_3-2}
& A_{3,2}(u_0)(t) =\frac{2}{5} c_3 (c_3-c_2) \int \exp(i (\xi_1+ \xi_2+\xi_3)x+ i(\xi_1+\xi_2+\xi_3)^5 t) 
 \frac{1-\exp(-i q_2 t)}{q_2} \nonumber \\ 
& \hspace{1.8cm} \times  
\frac{(\xi_1+\xi_2+\xi_3)^3}{\xi_2^2 +\xi_3^2+(\xi_2 +\xi_3)^2} 
\widehat{u_0}(\xi_1) \widehat{u_0}(\xi_2) \widehat{u_0}(\xi_3) d\xi_1
d\xi_2 d\xi_3 \nonumber \\
&  - \frac{2}{5}c_3(c_3-c_2) \int \exp(i (\xi_1+ \xi_2+\xi_3)x+ i(\xi_1+\xi_2+\xi_3)^5 t)  \frac{1-\exp(-i q_3 t)}{q_3} \nonumber \\
& \hspace{0.6cm} \times 
\frac{(\xi_1+\xi_2+\xi_3)^3}{\xi_2^2+\xi_3^2+(\xi_2+\xi_3)^2} 
\widehat{u_0}(\xi_1) \widehat{u_0}(\xi_2) \widehat{u_0}(\xi_3) d\xi_1
d\xi_2 d\xi_3 + \text{(remainder terms)}.
\end{align}
We assume that $\xi_1 \in C^{+}$, $\xi_2 \in C^{-}$ and $\xi_3 \in C^{+}$. 
Following 
(\ref{cu_3-1}) and (\ref{cu_3-2}), we have 
\begin{align} \label{cu_3-3}
&|\widehat{A}_{3}(\phi_N)(t)| \geq \Bigl| \exp (i \xi^5 t) \xi \nonumber \\
& \hspace{0.3cm} \times \int \Bigl\{ \Bigl(\frac{1}{5}c_{3}(c_3-c_2) -c_1\Bigr) \frac{1-\exp(-iq_2 t)}{q_2}-\frac{1}{5}(c_3-c_2) \frac{1-\exp(-i q_3 t)}{q_3} \Bigr\}  \nonumber \\
& \hspace{1.2cm} \times \widehat{\phi}_N(\xi_1) \widehat{\phi}_N(\xi_2) 
\widehat{\phi}_{N} (\xi-\xi_1-\xi_2) d\xi_1 d\xi_2 \Bigr| +\text{(remainder terms)}.
\end{align}
Here we used the change variables from $\xi_3$ to $\xi=\xi_1+\xi_2+\xi_3$. 
From $c_1 \neq \frac{1}{5} c_3(c_3-c_2)$ and 
(\ref{cu_3-3}), we obtain
\begin{align*}
\bigl| \widehat{A}_3(\phi_N) (t) \bigr|  \geq & \frac{|t|}{2}
\Bigl| c_1-\frac{1}{5} c_3 (c_3-c_2) \Bigr|~ N^{-3s-3/4} ~|\xi| \chi_{[N-N^{-3/2}, N+N^{-3/2}]} (\xi) \\
- & C N^{-3s-9/4} ~|\xi| 
\chi_{[N-N^{-3/2}, N+N^{-3/2}]} (\xi),
\end{align*}
where $C \geq 0$ is some constant. 
Thus there exists a constant $C'>0$ such that 
\begin{align*}
\| A_3(\phi_N) (t) \|_{H^{s,a}} \geq C' N^{-2s-1/2}
-2C N^{-2s-3}.
\end{align*}
Therefore, when $s<-1/4$ and $a \in \mathbb{R}$, there is no positive constant 
$C$ such that $
\| A_3 (\phi_N) (t) \|_{H^{s,a}} \leq C \| \phi_N \|_{H^{s,a}}^3$ for bounded $|t|$.

\section{Appendix}
We mention the typical counterexamples of (\ref{BE-3}) for (\ref{co_cr_1}).

\vspace{1em}

\noindent
\textbf{Example 1.} ($high \times high \rightarrow low$ interaction) \\
We define the rectangles $P_1, P_2$ as follows:
\begin{align*}
P_1:= & \bigl\{ (\tau,\xi) \in \mathbb{R}^2~ ;~ 
|\xi-N | \leq  N^{-3/2}, ~~ |\tau-( 5N^4 \xi -4N^5) |  \leq 1/2  \bigr\}, \\
P_2:= & \bigl\{ (\tau,\xi) \in \mathbb{R}^2~;~(-\tau,-\xi) \in A_1 \bigr\}. 
\end{align*}
Here we put
\begin{align} \label{rec-1}
f(\tau,\xi):= \chi_{P_1}(\tau,\xi), \hspace{0.3cm}
g(\tau,\xi):=\chi_{P_2}(\tau,\xi).
\end{align}
Then we have 
\begin{align} \label{int-1}
 f*g(\tau,\xi) \gtrsim  N^{-3/2}~\chi_{R_1} (\tau,\xi),
\end{align}
where
\begin{align*}
 R_1:=  \bigl\{ (\tau,\xi) \in \mathbb{R}^2~;~ 
\xi \in [ 1/2 N^{-3/2},3/4 N^{-3/2} ], ~~
 |\tau- 5N^4 \xi | \leq 1/2 \bigr\}.
\end{align*}
Inserting (\ref{rec-1}) and (\ref{int-1}) into (\ref{BE-3}), the necessary condition for (\ref{BE-3}) is $b \leq 3a/5+4s/5+11/10$. 
Thus $b\leq 3a/5+ 9/10$ if (\ref{BE-3}) for $s=-1/4$. \\
\newline
\textbf{Example 2.} ($high \times low \rightarrow high$ interaction) \\
We define the rectangle $Q$ as follows:
\begin{align*}
Q:=\bigl\{ (\tau,\xi) \in \mathbb{R}^2 ~;~|\xi-2N^{-3/2}| \leq N^{-3/2},~~
|\tau-(5N^4 \xi)| \leq 1/2 \bigr\}.
\end{align*}
Here we put 
\begin{align} \label{rec-2}
f(\tau,\xi) = \chi_{P_1}(\tau,\xi), \hspace{0.3cm} 
g(\tau,\xi)=\chi_{Q}(\tau,\xi).
\end{align}
Then we have 
\begin{align} \label{int-2}
f*g(\tau,\xi) \gtrsim N^{-3/2}~\chi_{R_2} (\tau,\xi),
\end{align}
where
\begin{align*}
R_2:=  \bigl\{ (\tau,\xi) \in \mathbb{R}^2~;~ 
|\xi-N| \leq N^{-3/2}/4 , ~~
 |\tau- (5N^4 \xi-4N^5 ) | \leq 1/2 \bigr\}.
\end{align*}
Substituting (\ref{rec-2}) and (\ref{int-2}) into (\ref{BE-3}), 
the necessary condition for (\ref{BE-3}) is $b \geq 3a/5+9/10$. \\ 
\newline
\textbf{Example 3.} ($high \times high \rightarrow high$ interaction) \\
We put  
\begin{align} \label{rec-3}
f(\tau,\xi)= \chi_{P_1}(\tau,\xi), \hspace{0.3cm} 
g(\tau,\xi)= \chi_{P_1}(\tau,\xi).
\end{align} 
Then we have
\begin{align} \label{int-3}
 f*g(\tau,\xi) \gtrsim  N^{-3/2}~\chi_{R_3} (\tau,\xi),
\end{align}
where
\begin{align*}
 R_3:=  \bigl\{ (\tau,\xi) \in \mathbb{R}^2~;~| \xi -2N | \leq N^{-3/2}/2 ,
~~ | \tau- (5N^4 \xi -8N^5) | \leq 1/2 \bigr\}.
\end{align*}
Inserting (\ref{rec-3}) and (\ref{int-3}) into (\ref{BE-3}), the necessary condition for (\ref{BE-3}) is 
$b \leq s/5+11/20$ for $s=-1/4$. 

On the other hand, we put 
\begin{align} \label{rec-4}
f(\tau,\xi)=\chi_{R_3}(\tau,\xi), \hspace{0.3cm}
g(\tau,\xi)=\chi_{P_2}(\tau,\xi). 
\end{align}
Then we have
\begin{align} \label{int-4}
f*g(\tau,\xi) \gtrsim N^{-3/2}~\chi_{R_2}(\tau,\xi).
\end{align}
Substituting (\ref{rec-4}) and (\ref{int-4}) into (\ref{BE-3}), 
the necessary condition for (\ref{BE-3}) is $b \geq 1/2$ for $s=-1/4$.


\begin{thebibliography}{99}
\bibitem{BT}
J. Bejenaru and T. Tao, \textit{Sharp well-posedness and ill-posedness results for a quadratic nonlinear Schr\"{o}dinger equation}, J. Funct. Anal. \textbf{233} (2006), 228--259.
\bibitem{Be84}
T. B. Benjamin, \textit{Impulse, flow force and variational principles}, IMA J. Appl. Math. \textbf{32} (1984), no. 1--3,3--68.
\bibitem{Be77}
D. J. Benney, \textit{A general theory for interactions between short and long waves}, IMA J. Appl. Math. \textbf{56} (1977), 81--94.
\bibitem{Bo} J. Bourgain, \textit{Fourier restriction phenomena for certain lattice subset applications to nonlinear evolution equation}, 
Geometric and functional Anal. 
\textbf{3} (1993), 107--156, 209--262. 
\bibitem{Bo97} J. Bourgain, \textit{Periodic Korteweg de Vries equation with measures as initial data}, Selecta Math. (N.S.) 
 \textbf{3} (1997), no. 2, 115--159.
\bibitem{CLMW} 
W. Chen, J. Li, C. Miao and J. Wu, \textit{Low regularity solution of two fifth-order KdV type equations}, J. D'Anal. Math. \textbf{107} (2009), 
221--238.
\bibitem{GTV} J. Ginibre, Y.Tsutsumi and G. Velo, \textit{On the Cauchy problem for the Zaharov system}, J. Func. Anal. \textbf{151} (1997), no. 2, 384--436.
\bibitem{Gr} A. Gr\"{u}nrock, \textit{On the hierarchies of higher order mKdV and KdV equaions}, Cent. Eur. J. Math. \textbf{8} (2010), 500--536. 
\bibitem{Ho} J. Holmer, \textit{Local ill-posed of the 1D Zaharov system}, Electron J. Differential Equations \textbf{2007}, no. 24, 22pp.
\bibitem{TK} 
T. K. Kato, \textit{Remark on well-posedness and ill-posedness for the KdV equation},  Electron J. Differential Equations \textbf{2010}, no. 142, 15pp.
\bibitem{KPV93} C. E. Kenig, G. Ponce, and L. Vega, \textit{Well-posed and scattering results for the generalized Korteweg-de Vries equation via the contraction principle}, Comm. Pure Appl. Math. \textbf{46} (1993) no. 4, 527--620.
\bibitem{KPV94} 
C. E. Kenig, G. Ponce, and L. Vega, \textit{Higher-order nonlinear dispersive equations}, Proc. Amer. Math. Soc. \textbf{122} (1994), no.1, 157--166.
\bibitem{Ki09}
N. Kishimoto, \textit{Well-posedness of the Cauchy problem for the Korteweg-de 
Vries equation at the critical regularity}, Differential Integral Equations \textbf{22} (2009), 447--464.
\bibitem{KT}
N. Kishimoto and K. Tsugawa, \textit{Local well-posedness for quadratic Schr\"{o}dinger equations and "good'' Boussinesq equation}, Differential Integral Equations \textbf{23} (2010), no. 5--6, 463--493.
\bibitem{KT05}
H. Koch and N. Tzvetkov, \textit{Nonlinear wave interactions for the Benjamin-Ono equation}, Int. Math. Res. Not. {\bf 2005}, no. 30, 1833--1847. 
\bibitem{KT08}
H. Koch and N. Tzvetkov, \textit{On finite energy solutions of the KP-I equation}, Math. Z {\bf 258} no. 1, 55--68. 
\bibitem{Kw}
S. Kwon, \textit{On the fifth order KdV equation: Local well-posedness and lack of uniform continuity of the solution map}, J. Differential Equations \textbf{245} (2008), no. 9, 2627--2659.
\bibitem{Kw08} 
S. Kwon, \textit{Well-posedness and ill-posedness of the fifth order modified KdV equation}, Electron. J. Differential Equations \textbf{2008}, no. 1, 15pp.
\bibitem{MST01}
L. Molinet, J. C. Saut and N. Tzvetkov, \textit{Ill-posedness issue for the Benjamin-Ono and related equations}, SIAM J. Math Anal. \textbf{33} (2001), 982--988.
\bibitem{MST02}
L. Molinet, J. C. Saut and N. Tzvetkov, \textit{Well-posedness and ill-posedness results for the Kadomtsev-Petviashvili-I equation}, Duke Math. J. \textbf{115} (2002), no. 2, 353--384.
\bibitem{MT}
T. Muramatu and S. Taoka, \textit{The initial value problem for the 1-D semilinear Schr\"{o}dinger equation in Besov space}, J. Math. Soc. Japan \textbf{56} (2004), no. 3, 853--888.
\bibitem{Ol} P. L. Oliver, \textit{Hamiltonian and non-Hamiltonian models for water waves}, in ``Lecture notes in Physics'' No. 195, 273--290, Springer-Verlag, New York, 1984.
\bibitem{Pi}
D. Pilod, \textit{On the Cauchy problem for higher-order nonlinear dispersive 
equations}, J. Differential Equations \textbf{245} (2008), no. 8, 2055--2077.
\bibitem{Po} G. Ponce, \textit{Lax pairs and higher order models for water waves}, J. Differential Equations \textbf{102} (1993), no. 2, 360--381.
\bibitem{Ta}
T. Tao, \textit{Multilinear weighted convolution of $L^2$ functions, and application to nonlinear dispersive equations}, J. Amer. Math. \textbf{123} (2001), 
839--908.
\bibitem{Tr} 
H. Triebel, ``Theory of Function Spaces'', Monographs in Mathematics, 78. Birkh\"{a}user Verleg, Basel, 1983.
\bibitem{Ts} K. Tsugawa, \textit{Well-posedness and weak rotation limit for the Ostrovsky equation}, J. Differential Equations {\bf 247} (2009), 3163--3180. 
\end{thebibliography}
\end{document}